\newcommand{\Z}{\mathbb{Z}}
\newcommand{\mc}[1]{\mathcal{#1}}
\newcommand{\ra}{\rightarrow}
\renewcommand{\P}{\mathbb{P}}
\newcommand{\N}{\mathbb{N}}
\newcommand{\sss}[1]{\scriptscriptstyle{#1}}
\newcommand{\blank}[1]{}
\newcommand{\good}{good }
\newcommand{\NWP}[1]{\boldsymbol{#1}^{\nwarrow}}
\newcommand{\SEP}[1]{\boldsymbol{#1}^{\searrow}}
\newcommand{\nn}{\nonumber}
\newcommand{\rd}{\boldsymbol{\rho}_d}
\newcommand{\bs}[1]{\boldsymbol{#1}}
\newcommand{\bso}{\bs{\omega}}
\newcommand{\bsg}{\bs{\mc{G}}}
\newcommand{\1}{\bs{1}}
\newcommand{\qt}{\Delta}
\newcommand{\rt}{\Delta}
\newcommand{\OLa}{\overline \rt}
\title{Percolation of terraces,\\ and enhancements for the orthant model} 
\author{Mark Holmes\footnote{University of Melbourne.  Email: holmes.m@unimelb.edu.au} \,
 and  Thomas S. Salisbury\footnote{York University. Email: salt@yorku.ca}}
\newcounter{thmcounter}
\newcounter{other}
\newtheorem*{theorem*}{Theorem}
\newtheorem{theorem}[thmcounter]{Theorem}
\newtheorem{lemma}[other]{Lemma}
\newtheorem{corollary}[other]{Corollary}
\newtheorem{proposition}[other]{Proposition}
\theoremstyle{definition}
\newtheorem{definition}[other]{Definition}
\newtheorem{example}[other]{Example}
\newtheorem{remark}[other]{Remark}
\begin{document}
\maketitle

\begin{abstract}
We study a model of an i.i.d.~random environment  in general dimensions $d\ge 2$, where each site is equipped with one of two environments.  The model  comes with a parameter $p$ which governs the frequency of the first environment, and for each dimension $d$ there is a critical parameter $p_c(d)$ at which there is a phase transition for the \emph{geometry} of a particular connected cluster (the cluster is infinite for all $p$).   We use the celebrated methodology of enhancements in this 
novel setting to prove that $p_c(d)$ is strictly monotone in $d$ for this model.  To do so we study the discrete geometry and percolation theory of higher-dimensional structures called terraces. 
\end{abstract}

\noindent{ {\bf Keywords:} percolation, enhancement, random environment, orthant model, critical point, terraces, discrete geometry.}

\medskip

\noindent{ {\bf MSC2020:} 60K35, 52C99}
\tableofcontents

\section{Introduction}
In this paper, we study certain random directed subgraphs of the lattice $\mathbb{Z}^d$ (called the \emph{orthant model} and the \emph{half-orthant} model - see Examples \ref{exa:orthant} and \ref{exa:horthant} below) in which (with probability $p$) a vertex will assume a configuration that blocks immediate passage to sites below it.  This leads to a phase transition in which passage downwards is feasible for small $p$ but not for large $p$.  These random directed graphs have been studied earlier in the case $d=2$, in \cite{DRE} and \cite{DRE2}. More recently, \cite{phase} defined a critical point $p_c(d)$ for these (and other) models, while \cite{shape} examined the asymptotic shape of the set of sites in $\mathbb{Z}^d$ that can be reached from the origin $o$ when $p$ is large.  Beekenkamp \cite{Bk21} extended the validity of the shape theorem for $p$ down to a (potentially different) critical value $p'_c(d)$.  Random walks in various directed graphs including these kinds were studied in \cite{RWDRE} and \cite{RWDRE2}. 

The fact that $p_c(d)$ is monotone increasing in $d$ is straightforward.  
The main result of this paper is that $p_c(d)$ is \emph{strictly} monotone in $d\ge 2$ - see Theorem \ref{thm:p_c}.  This turns out to be highly non-trivial and  requires a detailed excursion into discrete geometry, because the argument centres on the study of percolation for higher dimensional structures called {\it terraces}. 

\subsection{Notation and Results}
Let $d\ge 2$.  Let $[d]=\{1,2,\dots, d\}$, and let $(e_i)_{i \in [d]}$ denote the canonical basis vectors for $\Z^d$.  Set $\mc{E}_+(d)=\{e_i:i \in [d]\}$, $\mc{E}_-(d)=\{-e_i:i \in [d]\}$, and $\mc{E}(d)=\mc{E}_+(d)\cup \mc{E}_-(d)$. When there is no chance of confusion, we will drop $d$ from the above notation.

Let $\mu$ be a probability measure on the measurable space $\big(\mc{P}(\mc{E}),\mc{P}(\mc{P}(\mc{E}))\big)$, where $\mathcal{P}(\mc{A})$ denotes the set of subsets of $\mc{A}$.  We will abuse notation and write $\mu(\mc{G})$ instead of $\mu(\{\mc{G}\})$ where $\mc{G}\subset \mc{E}$ (i.e.~$\mc{G} \in \mc{P}(\mc{E})$).
At each $x\in\mathbb{Z}^d$ independently choose a $\mathcal{G}_x\subset\mathcal{E}$ with law $\mu$. We refer to $\bs{\mc{G}}=(\mathcal{G}_x)_{x\in \mathbb{Z}^d}$ as a \emph{degenerate random environment}. Inserting arrows from $x$ to each of the vertices $\{x+e:e \in \mathcal{G}_x\}$, yields a random directed graph with vertex set $\Z^d$ and directed edge set $\{(x,x+e):x \in \Z^d,e\in \mc{G}_x\}$.
The two examples that are relevant to the present work are as follows.
\begin{example}[Orthant model]
\label{exa:orthant}
$\mu(\mc{E}_+)=p=1-\mu(\mc{E}_-)$.
\end{example}
\begin{example}[Half-orthant model]
\label{exa:horthant}
$\mu(\mc{E}_+)=p=1-\mu(\mc{E})$.
\end{example}
Hereafter we are interested in the set of vertices $\mc{C}_x\subset \Z^d$ that can be reached from $x$ by following the arrows.  In both examples above $\mathcal{C}_x$ \emph{is always infinite}.  To see this note that $e_1\in \mc{E}_+$ and $-e_2\in \mc{E}_-$ so that from $x$ we can follow an infinite self-avoiding path in $\mc{C}_x$ consisting of just $e_1$ and $-e_2$ steps.  Degenerate random environments for which the forward clusters $(\mc{C}_x)_{x \in \Z^d}$ are a.s.~infinite are the natural random directed graphs to study in order to understand {\em random walks in non-elliptic random environments}.  In this context $\mathcal{G}_x$ represents the set of possible steps that the walk can take from $x$ and the fact that forward clusters are a.s.~infinite ensures that the random walker does not get stuck on a finite set of sites (see e.g.~\cite[Lemma 2.2]{RWDRE}).   

 Consider a {\it random walk in random environment} (RWRE) in which the walker chooses uniformly at random from the available arrows in a degenerate random environment $\mathcal{G}_x$. In \cite{RWDRE2} it is shown that this RWRE, for $\mathcal{G}_x$ given by  either Example \ref{exa:orthant} or Example \ref{exa:horthant}, is ballistic in direction  $1:=\sum_{e \in \mc{E}_+}e$ when $d=2$ and $p>p_c(d)$, or when $d\ge 3$ and $p$ is large. In \cite{Bk21} the latter statement is extended to $p>p'_c(d)$.   %His Corollary 2.  
For the former example, this should in fact hold for all $p>1/2$ (but we don't know how to prove this), while for the latter it should not be hard to prove that this (and a CLT) holds for all $p>0$ (see e.g.~\cite{BR07}).

We can couple the environments together for all $p$ and all pairs of subsets of $\mc{E}$ in the natural way, using independent uniform random variables $(U_x)_{x \in \Z^d}$. In other words, for $p\in[0,1]$ and $E,F\subset \mc{E}$ define 
$$
\mathcal{G}_x=\mc{G}_x(p)=\begin{cases} E, & \text{if $U_x\le p$}\\
F, & \text{if $U_x>p$.}
\end{cases}
$$
The sets $\mc{C}_x=\mc{C}_x(p)$ are clearly increasing in $E$ and $F$ (so $\mc{C}_x$ for the orthant model is a subset of $\mc{C}_x$ for the half-orthant model). When $E\subset F$ (e.g.~for the half-orthant model $E=\mc{E}_+$ and $F=\mc{E}$), they are also decreasing in $p$.    Let 
\begin{align}
\Omega_1&=\{x \in \Z^d:U_x\le p\}=\{x \in \Z^d:\mc{G}_x=E\}\\
\Omega_0&=\{x \in \Z^d:U_x>p\}=\{x \in \Z^d:\mc{G}_x=F\}.
\end{align}

The orthant model and its phase transition in 2 dimensions are studied in \cite{DRE,DRE2}. Let $o=(0,\dots,0)$ denote the origin in $\Z^d$.  Using a duality with oriented site percolation on the triangular lattice, those papers proved a  shape theorem for a particular boundary of $\mc{C}_o$ for this model, and established improved estimates for the critical value of the dual percolation model. Similar arguments apply to the half-orthant model with $d=2$.  

To formulate the phase transitions in general  dimensions, define for $x\in \Z^d$,
\begin{equation}
L^{(i)}_x(p):=\inf\left\{k\in \Z: x+ke_i\in \mc{C}_o(p)\right\}.
\end{equation}
Also, for $z\in \Z^d$ and $i \in [d]$ define $z_{\{+i\}}=\{z+ke_i:k\ge 0\}$.  For a set $A\subset \Z^d$ define $A_{\{+i\}}=\cup_{z\in A} z_{\{+i\}}$.
The following result concerning the half-orthant model is proved in \cite{phase}.  Related results can be stated for the orthant model, but since e.g.~the orthant model is a non-monotone model (in $p$), it is cleaner to state results for the half-orthant model.
\begin{theorem}[\cite{phase}]
\label{thm:phasetransition}
For the half-orthant model (Example \ref{exa:horthant}) there exists $p_c(d)\in (0,1)$ such that:
\begin{enumerate}[\normalfont(I)]
\item for $p<p_c(d)$, $\mc{C}_o(p)=\Z^d$ a.s.~and;
\item when $p\in (p_c(d),1)$, for each $i \in [d]$
\begin{enumerate}[\normalfont(a)]
\item $L^{(i)}_x(p)$ is a.s.~finite for every $x\in \Z^d$, and 
\item $\mc{C}_o(p)=(\bigcup_{x\in \Z^d}\{x+L^{(i)}_x(p)\})_{\{+i\}}$, and 
\end{enumerate}
\item when $p=p_c(d)$, either $\mc{C}_o(p)=\Z^d$ a.s., or (II)(a) and (b) hold a.s. for each $i\in [d]$.
\end{enumerate}
\end{theorem}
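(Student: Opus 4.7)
The plan is to combine a zero-one law for the event $\{\mc{C}_o(p)=\Z^d\}$ with monotonicity in $p$ to define $p_c(d)$ and to establish (I), (III), and the first halves of (II). The column-filling statement (II)(a), and hence (II)(b), require additional ergodic arguments that exploit the defining property of the half-orthant model that every site has an outgoing $+e_i$ arrow for each $i\in[d]$.

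First I would establish the zero-one law. The environment is i.i.d., so the shift $\tau_{e_1}$ is ergodic; since $\mc{C}_o(\tau_{e_1}\bsg) = \mc{C}_{e_1}(\bsg) - e_1$, Birkhoff's ergodic theorem gives
\[
\frac{1}{N}\sum_{n=0}^{N-1}\indic{\mc{C}_{ne_1}(\bsg)=\Z^d}\to \P_p(\mc{C}_o=\Z^d)\quad\text{a.s.}
\]
If $\P_p(\mc{C}_o=\Z^d)>0$ then a.s.\ some $n\ge 1$ satisfies $\mc{C}_{ne_1}=\Z^d$; the half-orthant property gives $ne_1\in\mc{C}_o$ via $n$ consecutive $+e_1$-arrows, hence $\mc{C}_o\supset\mc{C}_{ne_1}=\Z^d$, so $\P_p(\mc{C}_o=\Z^d)=1$. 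Together with the decreasing monotonicity in the standard coupling this makes $p\mapsto \P_p(\mc{C}_o=\Z^d)$ a $\{0,1\}$-valued decreasing function, and I would set $p_c(d):=\sup\{p:\P_p(\mc{C}_o=\Z^d)=1\}$. To confirm $p_c(d)\in(0,1)$: at $p=0$ every site has all $2d$ arrows and $\mc{C}_o=\Z^d$ trivially, and a standard path-counting Peierls bound extends $\P_p(\mc{C}_o=\Z^d)>0$ to a neighbourhood of $0$; at $p=1$ we have $\mc{C}_o\subset o+\Z_{\ge 0}^d\ne\Z^d$, and a Peierls-type blocking argument (e.g.\ comparing with a suitable oriented site percolation model) shows $\P_p(\mc{C}_o\ne\Z^d)>0$ for $p$ close enough to $1$.

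For $p>p_c$, where $\mc{C}_o\ne\Z^d$ a.s., I would then deduce (II)(a)--(b). Part (II)(b) is immediate from (II)(a) and the $+e_i$-closure of $\mc{C}_o$: every $y\in\mc{C}_o$ has $y+ke_i\in\mc{C}_o$ for all $k\ge 0$, so when the minima $x+L^{(i)}_x(p)e_i$ are finite, $\mc{C}_o$ coincides with their $+e_i$-upward closure. For (II)(a) I would rule out $L^{(i)}_x=\pm\infty$ separately. If $\P(L^{(i)}_x=-\infty)>0$ for some $x$, then on that event $\mc{C}_o$ contains points of the $i$-column of $x$ arbitrarily far below; combining such a low hit with the positive ergodic density of $\mc{E}$-vertices and iterating $-e_j$ moves together with the $+e_i$-closure lets one reach every prescribed target, giving $\mc{C}_o=\Z^d$ with positive probability, contradicting $p>p_c$. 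If $\P(L^{(i)}_x=\infty)>0$ for some $x$, then some $i$-column is missed entirely; because the projection $\Pi$ of $\mc{C}_o$ perpendicular to $e_i$ is closed under $+e_j$ for $j\ne i$ and contains the projection of $\Z_{\ge 0}^d$, missing a column forces $\Pi^c$ to contain an infinite lower orthant, and an ergodic density argument in the $(d-1)$ perpendicular directions combined with the arbitrariness of the $+e_i$-coordinate yields a contradiction. Finally (III) follows from applying the zero-one law at $p=p_c$ itself: either $\mc{C}_o=\Z^d$ a.s., or $\mc{C}_o\ne\Z^d$ a.s.\ and the (II)(a)--(b) arguments go through verbatim.

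The main obstacle is ruling out $L^{(i)}_x=\infty$ in (II)(a): unlike the $-\infty$ case, there is no direct ``reach every target from a single low hit'' trick, and one must combine the monotonicity of the projection $\Pi$ under $+e_j$-shifts with a subtle ergodic density argument to obtain the contradiction. The remaining ingredients---the Birkhoff-based zero-one law, monotonicity, the reduction of (II)(b) to (II)(a), and the Peierls-type bounds on $p_c$---are essentially routine once the half-orthant property is fully exploited.
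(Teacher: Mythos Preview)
This theorem is not proved in the present paper; it is quoted from \cite{phase}, so there is no ``paper's own proof'' to compare against directly. That said, the present paper does contain the two key ingredients your outline rests on: Lemma~\ref{lem:01} (the a.s.\ equivalence of $\{\mc{C}_o=\Z^d\}$ with $\{\mc{C}_o$ not bounded below$\}$, together with the zero--one law) and Lemma~\ref{NWpaths} (every axis-parallel line meets $\mc{C}_u$ for $p\in(0,1)$). Your broad plan --- zero--one law via ergodicity and the half-orthant $+e_i$-closure, monotonicity to define $p_c$, Peierls-type bounds for $p_c\in(0,1)$, then the column analysis for (II) --- matches what is done in \cite{phase}, and your treatment of the $L^{(i)}_x=-\infty$ case is essentially the argument of Lemma~\ref{lem:01}.

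The one point where your outline is misleading is the $L^{(i)}_x=+\infty$ case. You flag this as the main obstacle and propose a ``subtle ergodic density argument'' on the projection $\Pi$, but in fact this is the \emph{easier} of the two cases: it holds for \emph{every} $p\in(0,1)$, not just $p>p_c$, and has a short constructive proof. One simply iterates two-dimensional Wn/Se paths (follow $-e_j$ when the current site is of type $\mc{E}$, else $+e_d$; the path is self-avoiding so each new site is fresh, and the $j$-th coordinate a.s.\ attains any prescribed value) to match the first $d-1$ coordinates of $x$ one at a time --- this is exactly Lemma~\ref{NWpaths}. Your projection argument is not wrong in spirit (unwinding it, one is led back to precisely this path construction), but as written it is too vague to count as a proof, and the phrase ``arbitrariness of the $+e_i$-coordinate'' does not by itself yield the needed $-e_j$ moves. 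Replace that paragraph with the Wn/Se construction and your sketch becomes a genuine proof.
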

It is shown in \cite{phase} that in any dimension the orthant and half-orthant models share the same $\mc{C}_o(p)$ ``outer''-boundary (for $p\ge 1/2$), in the sense that $L^{(i)}_x(p)$ are equal for the two models for each $x,i$.  Thus, interpreted as a critical point for finiteness of the $L_x^{(i)}$, the orthant and half-orthant models have the same critical value.

 Beekenkamp's $p'_c(d)$ is defined in \cite{Bk21} as the infimum of $p$'s such that $\mc{C}_o(p)$ is contained in certain convex cones. It is known that $p_c(d)\le p'_c(d)$, and it is conjectured that they are equal, but this is only known when $d=2$.

In the next section we will define a geometric class of subsets of $\mathbb{Z}^d$ that we call \emph{terraces}. We can then express the above phase transition in terms of percolation of terraces as follows.
\begin{theorem}
\label{thm:percolationofterraces}
The critical point $p_c(d)$ of the orthant or half-orthant model is the infimum of those $p$ such that there a.s.~exists a terrace $\Lambda$ with $\Lambda \subset \Omega_1$.
\end{theorem}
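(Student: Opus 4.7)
The plan is to establish both inequalities between $p_c(d)$ and $p^{\star}(d) := \inf\{p : \text{a terrace } \Lambda \subset \Omega_1 \text{ exists almost surely}\}$. The geometric fact linking the two is that $\Omega_1$-sites are one-way ``floors'': at any $y \in \Omega_1$ one has $\mc{G}_y = \mc{E}_+$ (in both the orthant and half-orthant models), so every outgoing arrow from $y$ points in some $+e_i$ direction. Consequently, a terrace $\Lambda \subset \Omega_1$ should act as a downward barrier: any directed path in the random graph that reaches $\Lambda$ from above is deflected upward and cannot descend to the lower side.

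For the inequality $p^{\star}(d) \le p_c(d)$, I would fix $p > p_c(d)$ and use Theorem~\ref{thm:phasetransition}(II)(a) to conclude that, for any fixed $i \in [d]$, the quantity $L^{(i)}_x(p)$ is almost surely finite for every $x \in \Z^d$. The candidate terrace is then
\[
\Lambda \;:=\; \{x + L^{(i)}_x(p)\, e_i : x \in \Z^d\},
\]
which contains exactly one site in each $e_i$-column of $\Z^d$. This set lies in $\Omega_1$: if some $y = x + L^{(i)}_x(p) e_i$ were in $\Omega_0$, then $-e_i \in \mc{G}_y$ would give the directed arrow $y \to y - e_i$, forcing $y - e_i \in \mc{C}_o(p)$ and contradicting the definition of $L^{(i)}_x(p)$ as an infimum. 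It remains to check that $\Lambda$ satisfies the formal definition of terrace from the next section, which should be routine because terraces are designed to model precisely this kind of stepped lower boundary.

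For the inequality $p_c(d) \le p^{\star}(d)$, I would assume $p < p_c(d)$, so that $\mc{C}_o(p) = \Z^d$ almost surely by Theorem~\ref{thm:phasetransition}(I), and argue by contradiction. If a terrace $\Lambda \subset \Omega_1$ exists with positive probability, then since ``some terrace $\Lambda \subset \Omega_1$ exists'' is a translation-invariant event, the ergodicity of the i.i.d.\ environment upgrades it to almost sure existence. A translation-invariance argument then produces, almost surely, a terrace $\Lambda \subset \Omega_1$ positioned so that $o$ lies strictly on its upper side; the barrier property above puts the entire lower side of $\Lambda$ outside $\mc{C}_o(p)$, contradicting $\mc{C}_o(p) = \Z^d$.

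The main obstacle will be pinning down the separation statement for terraces carefully, because a terrace need not be a flat hyperplane and may have steps or overhangs. Specifically, one must show (i) that a terrace contained in $\Omega_1$ genuinely blocks every directed path trying to descend through it, and (ii) that almost surely one can place (some translate of) such a terrace so that $o$ provably sits on the upper side. Both points should follow from the formal combinatorial definition of terrace combined with a short ergodicity/translation-invariance argument, so the real content of Theorem~\ref{thm:percolationofterraces} is essentially a repackaging of Theorem~\ref{thm:phasetransition} in the language of terraces.
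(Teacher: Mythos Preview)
Your overall strategy matches the paper's: deduce the result from Proposition~\ref{prop:CoStructure} (equivalently Lemma~\ref{lem:terraceC}), building the terrace from $\mc{C}_o$ for $p>p_c$ and invoking the barrier property for $p<p_c$. There is, however, a concrete gap in your construction for $p>p_c$.

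Your candidate $\Lambda=\{x+L^{(i)}_x e_i:x\in\Z^d\}$ for a \emph{single} fixed $i$ is in general \emph{not} a terrace in the sense of Definition~\ref{def:terrace}. By Lemma~\ref{lem:terrace}(ii) any terrace satisfies $\Lambda=\lambda_{\Lambda_+}$; since your set has $\Lambda_+=\mc{C}_o$, it could only be a terrace if it coincided with $\lambda_{\mc{C}_o}=\{\gamma_{\mc{C}_o}(y,j):y\in\Z^d,\ j\in[d]\}$. But $\lambda_{\mc{C}_o}$ records the lowest points of $\mc{C}_o$ in \emph{every} coordinate direction, and any $z\in\mc{C}_o$ with $z-e_j\notin\mc{C}_o$ for some $j\neq i$ yet $z-e_i\in\mc{C}_o$ lies in $\lambda_{\mc{C}_o}$ but not in your set. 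Such points occur whenever the boundary of $\mc{C}_o$ drops by at least~$2$ in some direction $j\neq i$, so the verification you deferred as ``routine'' in fact fails. The fix is simply to take the union over all $i$, i.e.\ $\Lambda=\lambda_{\mc{C}_o}$; your $\Omega_1$-argument then applies in each direction separately and yields $\lambda_{\mc{C}_o}\subset\Omega_1$, exactly as in the paper's Lemma~\ref{lem:terraceC}.

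Two smaller points. For $p<p_c$, your ``translate so that $o$ sits above $\Lambda$'' step is cleaner without moving anything: almost surely $\mc{C}_x=\Z^d$ simultaneously for every $x\in\Z^d$, so pick any $x\in\Lambda_+$ and apply the barrier argument (via Lemma~\ref{lem:terrace}(vii)) to $\mc{C}_x$. And you have not handled the orthant model; the paper does this by quoting from \cite{phase} that $(\mc{C}_o(p))_+$ for the orthant model coincides with $\mc{C}_o(p)$ for the half-orthant model, so the same $\Omega_1$-terraces arise.
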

This paper is therefore a contribution to the general program of extending standard percolation techniques to such higher-dimensional structures. For other examples of high-dimensional percolation structures, see \cite{GH10, GHK14}.  Our main result is the following.
\begin{theorem}
\label{thm:p_c}
For the orthant or half-orthant model  $p_c(d) < p_c(d+1)$.
\end{theorem}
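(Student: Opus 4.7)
The approach is to apply the celebrated Aizenman--Grimmett enhancement methodology, adapted to the terrace percolation framework of Theorem~\ref{thm:percolationofterraces}. By that theorem, it suffices to produce a parameter $p$ at which the $d$-dimensional model admits a terrace in $\Omega_1\subset\Z^d$ almost surely, while the $(d+1)$-dimensional model admits no terrace in $\Omega_1\subset\Z^{d+1}$ almost surely. I work throughout in $\Z^{d+1}$ with a common coupling by $(U_x)_{x\in\Z^{d+1}}$, whose restriction to the slab $\Z^d\times\{0\}$ recovers the $d$-dimensional environment.

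The enhancement is introduced via a family of modified $(d+1)$-dimensional environments parameterized by $s\in[0,1]$: independently at each site $x$, with probability $s$, the arrows $\pm e_{d+1}$ are forbidden from $\mathcal{G}_x$. Let $\wt p_c(s)$ denote the resulting critical value (defined via terrace percolation in $\Z^{d+1}$). By construction $\wt p_c(0)=p_c(d+1)$, and a separate comparison argument --- using that at $s=1$ the $(d+1)$-dimensional model decouples along the $e_{d+1}$ axis into independent $d$-dimensional slabs --- yields $\wt p_c(1)\le p_c(d)$. It therefore suffices to show that $\wt p_c(s)$ is strictly decreasing in $s$ on some subinterval of $[0,1]$.

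Strict monotonicity of $\wt p_c(s)$ would follow from a Russo-type differential inequality: letting $\theta_N(p,s)$ denote the probability that a large finite box contains an $\Omega_1$-terrace fragment crossing it, one seeks an estimate $\partial_s \theta_N \ge c\, \partial_p \theta_N$ uniform in $N$ and in $(p,s)$ near criticality. The essential input is a \emph{pivotality for terraces} estimate: a local configuration at a single site whose enhancement (forbidding $\pm e_{d+1}$) destroys every terrace passing through a neighborhood, with conditional probability uniformly bounded away from zero given the configuration elsewhere. Integrating this differential inequality yields $\wt p_c(1)<\wt p_c(0)$, and combining with the comparisons above gives $p_c(d)\le \wt p_c(1)<\wt p_c(0)=p_c(d+1)$, as required.

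The main obstacle is the pivotality estimate itself. In classical path percolation, a site is pivotal through elementary local connectivity events for a single path. Terraces, by contrast, are higher-dimensional monotone surfaces, and a local perturbation often merely forces a global reorganization of a terrace rather than destroying it outright; many ``alternative'' terraces nearby can survive a single-site change. Overcoming this requires the detailed discrete-geometric analysis of terraces developed elsewhere in the paper: one must engineer a local defect whose probability is tractable yet which provably prevents any terrace from passing through its neighborhood. An additional delicate point is the non-monotonicity of the orthant model in $p$, which requires the coupling between enhanced and unenhanced models to be arranged carefully so that the usual differential-inequality machinery still applies.
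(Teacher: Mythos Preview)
Your proposal has a concrete structural gap: the enhancement you describe does not interact with terrace percolation in the way you need. Membership in $\Omega_1$ means $\mc{G}_x=\mc{E}_+$, and this is determined by the coin $U_x\le p$; deleting the arrows $\pm e_{d+1}$ from $\mc{G}_x$ does not convert an $\mc{E}$-site into an $\mc{E}_+$-site, since the arrows $-e_1,\dots,-e_d$ remain. The event that $\Omega_1$ contains a $(d+1)$-terrace therefore depends only on $(U_x)_{x\in\Z^{d+1}}$ and $p$, so if $\wt p_c(s)$ is defined via terrace percolation it is constant in $s$. If instead you define $\wt p_c(s)$ via the event $\{\mc{C}_o=\Z^{d+1}\}$, then at $s=1$ no site carries a $\pm e_{d+1}$ arrow, so $\mc{C}_o$ is confined to the hyperplane through $o$ and $\wt p_c(1)=0$; this gives no link between $p_c(d)$ and $p_c(d+1)$. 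Either reading breaks the interpolation before any differential inequality enters.

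The paper proceeds differently. It does not perturb the $(d+1)$-dimensional model. Instead it interposes a two-layer slab $S^{\{1,2\}}=\Z^d\times\{1,2\}$, giving $p_c(d+1)\ge p_c^{\{1,2\}}(d)$, and then compares the slab to a \emph{disturbed} $d$-dimensional model in which sites of a sparse sublattice $V_d\subset\Z^d$ carry parameter $f(p)=p(1-(1-p)^{d+1})<p$ while all other sites keep parameter $p$. The slab forces each point of a bottom-layer $d$-terrace to see an $\Omega_1$-site among a fixed finite set of neighbours in the upper layer (Lemma~\ref{lem:slabboundary}), and the sparsity of $V_d$ makes these extra constraints independent; this yields $p_c^{\{1,2\}}(d)\ge p_c(f,d)$. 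The Russo/enhancement argument is then run entirely in $\Z^d$, between the parameters $p$ and $q$, and its substance is Proposition~\ref{prop:modification}: given any pivotal site $u\notin V_d$ one can modify the environment on a box of fixed side length so that some $\bar u\in V_d$ becomes pivotal. That proposition is where the deterministic terrace geometry of Sections~\ref{sec:terraces}--\ref{sec:localterr} is consumed (pushing up corners to create long linear segments that must meet $V_d$), and it is exactly the step you flag as the obstacle but do not carry out.
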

We will prove Theorem \ref{thm:p_c} via the methodology of enhancements, introduced by Aizenman and Grimmett in \cite{AG91}.  This is the ``standard'' methodology for proving strict inequality results for critical points in percolation models (see e.g.~\cite{MS2018,BBR14}).   Application of this methodology usually requires verifying some combinatorial statement.  Sometimes the statement in question is straightforward (or even obvious), while elsewhere it can be extremely challenging (see e.g.~\cite{GS98}, where strict inequality between the critical values of bond and site percolation is established in various lattices).    The full story of enhancements of site percolation on $\Z^d$ is complicated and currently
incomplete (see e.g.~\cite{BBR14}).

Our use of enhancements involves the deformation of terraces, which poses a significant challenge. 
Sections \ref{sec:terraces}--\ref{sec:localterr} are entirely geometric, establishing deterministic facts about terraces that will be required in Section \ref{sec:enhancement}, where the main theorem is proved. The approach in Section  \ref{sec:enhancement} is to use a slab to interpolate between $\mathbb{Z}^d$ and $\mathbb{Z}^{d+1}$. We analyze this slab by perturbing the $\mathbb{Z}^d$ model at points $V_d$ of a sub-lattice, which is taken to be sparse in order to ensure sufficient independence. The enhancement approach then requires us to estimate the number of pivotal sites in $V_d$. 

We will do that by modifying a given terrace locally in order to pass through some point of $V_d$. The obstacle to achieving this is that our terraces can be tightly constrained by boundary conditions, which can in some cases impose a high degree of rigidity on the terraces allowed. A consequence of the ideas of Section \ref{sec2intro}, that illustrates this issue, is that the kinds of boundary configurations that make the terrace rigid must already force it to go through points of $V_d$. Our approach to this problem is to show that the terraces already contain (or can be deformed locally in order to contain) long linear segments.

\begin{remark}
An alternate approach which would also establish our main result (while bypassing the interpolation via slabs) would be to fix the dimension $d$ and to consider two models, one with sets $E_0,F_0$ and the other with sets $E_1,F_1$, satisfying the following:
\begin{itemize}
\item $F_0=\mc{E}$ and $\{e_1\}\subset E_0\subsetneq \mc{E}_+$, and 
\item $F_1=\mc{E}$ and $E_0\subsetneq E_1\subset \mc{E}_+$,
\end{itemize}
Strict monotonicity would follow from establishing that the critical points for the above two models are not equal.  However, this approach seems to require an understanding of a more general class of geometric objects than the  terraces considered here.
\end{remark}

\section{Terraces}
\label{sec2intro}
As indicated earlier, we will require a detailed study of the geometry of certain sets that we call \emph{terraces}.

For $z\in \Z^d$  we let $z^{\sss[i]}=z\cdot e_i$ (note that when used in a \emph{superscript}, the notation  $[i]$  refers to the $i$-th coordinate of $z$ not $z^{\{1,\dots, d\}}$) and define
\begin{align}
z_+&=\{y\in \Z^d:y^{\sss[i]}\ge z^{\sss[i]} \text{ for each }i \in [d]\}, \text{ and }\nn\\
z_-&=\{y\in \Z^d:y^{\sss[i]}\le z^{\sss[i]} \text{ for each }i \in [d]\}.\nn
\end{align}
For $C\subset \Z^d$ we define $-C=\{-x:x\in C\}$,    
\begin{align}
C_+=\bigcup_{z\in C}z_+, \quad \text{ and }\quad C_-=\bigcup_{z\in C}z_-.\label{C_+def}
\end{align}
Trivially $\varnothing_+=\varnothing$ and
\begin{equation}
\label{+subset}
\text{if $A\subset B$ then $A_+\subset B_+$.}
\end{equation}

\begin{itemize}
\item A set $C\subset\mathbb{Z}^d$ is {\it saturated} if for every $y\in \mathbb{Z}^d$ and every  $i\in [d]$, there exist $ z\in C$ and $k\in \mathbb{Z}$ such that $z=y+ke_i$. 
\item A set $C\subset\mathbb{Z}^d$ is {\it solid above} (resp.~\emph{ below}) if $C_+=C$ (resp.~$C_-=C$).
\item For $i \in [d]$, a set $C$ is $i$-{\em bounded below} (resp.~\emph{above}) if for every $y\in \Z^d$, the set $\{k \in \Z:y+ke_i\in C\}$ is bounded below (resp.~above).  
\item We say that $C$ is {\em bounded below} (resp.~\emph{above}) if it is $i$-bounded below (resp.~above) for every $i\in [d]$.
\item If $C$ is saturated and bounded below, we may define $\gamma_{\sss C}:\mathbb{Z}^d\times [d]\to C$ by  $\gamma_{\sss C}(y,i)=z$, where $z=y+ke_i$ for $k$ the smallest integer making this point in $ C$.  For such a $C$, let $\lambda_C=\{\gamma_{\sss C}(y,i): y\in \mathbb{Z}^d, i\in [d]\}$.
\end{itemize}
It is trivial that $B_+$ is solid above and $B_-$ is solid below for any set $B\subset \Z^d$.  
The following are also trivial observations:
\begin{equation}
\label{solidabove_union}
\text{ if $G$ and $G'$ are solid above then so is $G\cup G'$, }
\end{equation}
\begin{equation}
\label{boundedbelow_union}
\text{ if $G$ and $G'$ are bounded below then so is $G\cup G'$, }
\end{equation}
\begin{definition}[Good set]
\label{def:good}
A set $C\subset \Z^d$ is said to be {\em \good} if $C$ is saturated, solid above, and bounded below.  
\end{definition}
It follows from the above that 
\begin{equation}
\label{good_union}
\text{ if $G$ and $G'$ are good then so is $G\cup G'$. }
\end{equation}

\begin{definition}[Terrace]
\label{def:terrace}
A set $\Lambda\subset \Z^d$ is a {\it terrace} if $\Lambda=\lambda_C$ for some good set $C$.  
\end{definition}

The following is essentially a reformulation of Theorem \ref{thm:phasetransition}, in the language of terraces, though we will not prove this until Section \ref{sec:terracesandCo}.
\begin{proposition} 
\label{prop:CoStructure}
For the half-orthant model, Example \ref{exa:horthant}:
\begin{enumerate}
\item[\normalfont(i)] $\mc{C}_o$ is solid above for each $p\in [0,1]$ and is saturated for $p<1$; 
\item[\normalfont(ii)]  If $p\in (p_c,1)$ then $\mc{C}_o$ is good, $\gamma_{\sss{\mc{C}_o}}(x,i)=x+L_x^{(i)}$, and 
$\lambda_{\mc{C}_o}=\{\gamma_{\sss{\mc{C}_o}}(x,i): x\in \mathbb{Z}^d, i\in [d]\}\subset \mc{C}_o$ is a terrace with $\lambda_{\mc{C}_o}\subset \Omega_1$ and $(\lambda_{\mc{C}_o})_+=\mc{C}_o$.
\item[\normalfont(iii)] If $p<p_c$ then $\mc{C}_o(p)=\Z^d$, and $\Omega_1$ contains no terrace.
\end{enumerate}
\end{proposition}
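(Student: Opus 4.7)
Part (i) is nearly immediate. Solidity above of $\mc{C}_o$ follows because the half-orthant model has $\mc{E}_+\subset \mc{G}_x$ deterministically at every site, so the cluster is closed under each $+e_j$-step. Saturation for $p<1$ reduces to Theorem \ref{thm:phasetransition}: on the phase where $\mc{C}_o=\Z^d$ it is trivial, and on the phase where every $L^{(i)}_x$ is a.s.\ finite it is exactly the content of that finiteness.

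Part (ii) is a repackaging of Theorem \ref{thm:phasetransition}(II). A.s.\ finiteness of every $L^{(i)}_x$ yields that $\mc{C}_o$ is saturated and bounded below, and combined with (i) this makes $\mc{C}_o$ good; the identification $\gamma_{\sss{\mc{C}_o}}(y,i)=y+L^{(i)}_y e_i$ is then the definition of $\gamma$. The inclusion $\lambda_{\mc{C}_o}\subset \Omega_1$ is the one step with real content: if $z=y+L^{(i)}_y e_i$ had $\mc{G}_z=\mc{E}$ rather than $\mc{E}_+$, then the arrow $-e_i$ from $z$ would give $z-e_i\in \mc{C}_o$, contradicting the minimality of $L^{(i)}_y$. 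Finally $(\lambda_{\mc{C}_o})_+=\mc{C}_o$ splits into: $\subset$ from $\lambda_{\mc{C}_o}\subset \mc{C}_o$ together with solidity above, and $\supset$ directly from Theorem \ref{thm:phasetransition}(II)(b).

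For (iii), $\mc{C}_o=\Z^d$ is Theorem \ref{thm:phasetransition}(I), so the substance is the no-terrace statement. The plan is a contradiction argument resting on a deterministic lemma: if $\Lambda\subset \Omega_1$ is a terrace and $C:=\Lambda_+$, then $\mc{C}_x\subset C$ for every $x\in C$. To set up the lemma, I first observe that $\Lambda_+$ is itself good --- writing $\Lambda=\lambda_{C'}$ for some good $C'$, a short geometric check gives $C'=(\lambda_{C'})_+=\Lambda_+=C$, so $\gamma_{\sss C}$ is well defined. Now any path from $x\in C$ that exits $C$ must do so via a $-e_i$-step at some site $x_k\in C$ with $x_k-e_i\notin C$, since $+e_j$ steps preserve $C$; the definition of $\gamma_{\sss C}$ then forces the minimal integer $m$ with $x_k+me_i\in C$ to equal $0$, so $x_k=\gamma_{\sss C}(x_k,i)\in \lambda_C=\Lambda\subset \Omega_1$, contradicting $-e_i\in \mc{G}_{x_k}$. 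Since $C$ is bounded below, $C\neq \Z^d$. If $\P(\Omega_1\text{ contains a terrace})>0$, then by translation invariance and countable subadditivity applied to $\{\Omega_1\text{ contains a terrace}\}=\bigcup_{y\in \Z^d}\{y\text{ lies on some terrace in }\Omega_1\}$ we get $\P(o\text{ lies on some terrace in }\Omega_1)>0$; on that event $o\in C$ forces $\mc{C}_o\subset C\neq \Z^d$, contradicting $\mc{C}_o=\Z^d$ a.s.

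I expect the main obstacle to be the deterministic geometric lemma in (iii) --- particularly the supporting step that $\Lambda_+$ coincides with the good set whose $\lambda$-image is $\Lambda$, without which $\gamma_{\sss C}$ is not obviously available and the exit-through-$\Lambda$ argument falls apart.
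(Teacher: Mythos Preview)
Your proposal is correct and follows essentially the same route as the paper: part (ii) is exactly the content of the paper's Lemma \ref{lem:terraceC}, and your deterministic blocking lemma in (iii) is precisely the ``final argument'' of that lemma's proof, with your ``short geometric check'' that $C'=(\lambda_{C'})_+$ being Lemma \ref{lem:terrace}(ii). One point worth noting: for (iii) the paper's proof only explicitly concludes that $\mc{T}_o$ is empty (no terrace $\Lambda\subset\Omega_1$ with $o\in\Lambda_+$), whereas the stated conclusion is that $\Omega_1$ contains no terrace whatsoever --- your translation-invariance/countable-union step is exactly what bridges this small gap, so your argument here is in fact slightly more complete than the paper's.
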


\begin{proof}[Proof of Theorem \ref{thm:percolationofterraces}]
The statement of Theorem \ref{thm:percolationofterraces} for the half-orthant model follows immediately from Proposition \ref{prop:CoStructure}. The corresponding statement for the orthant model follows in turn, using results of \cite{phase} which establish that $(\mc{C}_o(p))_+$ for the orthant model coincides with $\mc{C}_o(p)$ for the half-orthant model. 
\end{proof}

\begin{figure}
\includegraphics[scale=0.6]{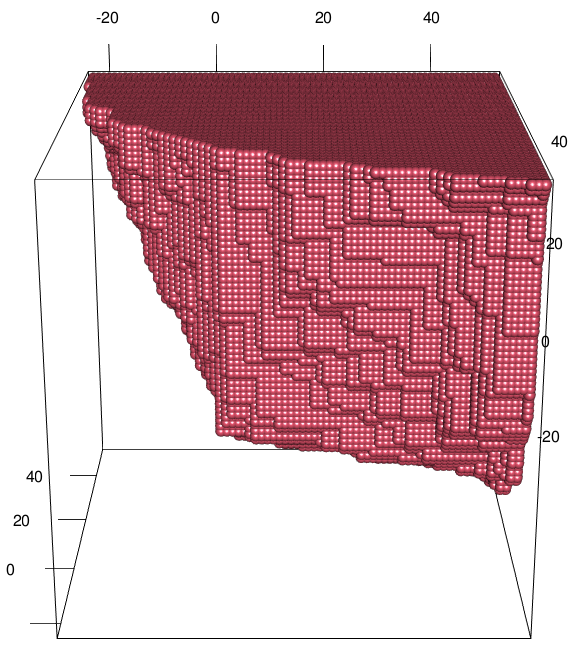}
\caption{A simulation of a portion of $\mc{C}_o$ for the half-orthant model of Example \ref{exa:horthant} in 3 dimensions when $p=0.95$.  The structure of the terrace $\lambda_{\mc{C}_o}$ is clearly visible.}
\label{fig:3d_1} 
\end{figure}
Figure \ref{fig:3d_1} depicts a realisation of part of the boundary of $\mc{C}_o$  for the orthant model in 3 dimensions with  $p=0.95$.

\subsection{Facts about terraces}
\label{sec:terraces}
This section consists of deterministic geometric properties about terraces.  For $a,b \in \Z^d$ with $b\in a_+$  define the box $[a,b]$ with corners $a$ and $b$ by 
\begin{equation}
[a,b]=\{z\in \Z^d: a^{\sss[i]}\le z^{\sss[i]} \le b^{\sss[i]} \text{ for each }i \in [d]\}.\label{boxdef}
\end{equation}
 Note that $[a,b]$ is non-empty, but consists of a single point in the case $a=b$.
\begin{lemma}
\label{lem:terrace}
If $\Lambda\subset\mathbb{Z}^d$ is a terrace then:
\begin{enumerate}[\normalfont(i)]
\item $\Lambda$ is saturated, bounded below, and for each $z\in\Lambda$ there is an $i\in [d]$ with $z-e_i\notin \Lambda_+$;
\item If $C$ is good and such that $\Lambda=\lambda_C$, then $C=\Lambda_+$ (and hence $\Lambda_+$ is good);
\item For any line $L$ parallel to an axis, $\Lambda\cap L$ is finite;
\item $\Lambda_+\cap \Lambda_-=\Lambda$;
\item $(\Lambda_+)^c$ is solid below, and $-((\Lambda_+)^c\cup \Lambda)$ is good; 
\item $\Lambda_+\setminus\Lambda$ is solid above;
\item If a nearest neighbour path starts in $\Lambda_+$ and ends in $(\Lambda_+)^c$, and if $k$ is the first time that the path is in $(\Lambda_+)^c$ then the path is in $\Lambda$ at time $k-1$;
\item $\Lambda_+\cap (x_-)$ is finite for every $x$.
\item If $x\in\Lambda_+$, $y\in\Lambda$, and $y\in x_+$ then $[x,y]\subset\Lambda$. 
\end{enumerate}
\end{lemma}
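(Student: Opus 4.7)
The plan is to exploit parts (i) and (ii) of the lemma to reduce the claim to checking two straightforward membership conditions. By (ii), $\Lambda_+$ is good and $\Lambda = \lambda_{\Lambda_+}$. Unwinding the definition of $\lambda_C$ for $C = \Lambda_+$, a point $z$ belongs to $\Lambda$ if and only if $z \in \Lambda_+$ and there exists some $i \in [d]$ with $z - e_i \notin \Lambda_+$: the forward direction is exactly (i), and the reverse direction holds because if $w = z - e_i$ then the smallest $k$ with $w + ke_i \in \Lambda_+$ is $k = 1$ (one needs $w \notin \Lambda_+$ but $w + e_i = z \in \Lambda_+$, and the solid-above property makes the relevant set of $k$ upward-closed in $\mathbb{Z}$), so $\gamma_{\Lambda_+}(w, i) = z \in \lambda_{\Lambda_+} = \Lambda$.

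With this characterization in hand, fix $z \in [x, y]$. The first step is to observe $z \in \Lambda_+$: since $z \in x_+$, $x \in \Lambda_+$, and $\Lambda_+$ is solid above, this is immediate. The second step is to produce an index $i$ with $z - e_i \notin \Lambda_+$. Using $y \in \Lambda$ together with part (i), I choose $i \in [d]$ such that $y - e_i \notin \Lambda_+$, and claim that the same $i$ works for $z$. Indeed, if we had $z - e_i \in \Lambda_+$, then since $y \in z_+$ yields $y - e_i \in (z - e_i)_+$, the solid-above property of $\Lambda_+$ would force $y - e_i \in \Lambda_+$, contradicting the choice of $i$. Thus $z - e_i \notin \Lambda_+$, and the characterization above gives $z \in \Lambda$.

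I do not anticipate any real obstacle: the whole argument amounts to a one-line monotonicity observation once the characterization from (i) and (ii) is in place. The only mildly subtle point is establishing the converse of (i) — namely that the two-condition membership test is sufficient, and not merely necessary, for $z \in \Lambda$ — which is why I invoke (ii) at the outset to rewrite $\Lambda$ as $\lambda_{\Lambda_+}$ and apply the definition of $\gamma_{\Lambda_+}$ directly.
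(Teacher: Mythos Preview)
Your proof of (ix) is correct. The paper's argument is a small repackaging of the same idea: it shows $z\in\Lambda_+$ exactly as you do, and then rules out $z\in\Lambda_+\setminus\Lambda$ by invoking part (vi) (which says $\Lambda_+\setminus\Lambda$ is solid above), since that would force $y\in z_+\subset\Lambda_+\setminus\Lambda$. You instead bypass (vi) by setting up the two-condition characterization $z\in\Lambda\iff z\in\Lambda_+$ and $\exists i$ with $z-e_i\notin\Lambda_+$ (this is exactly the content of the paper's Remark~\ref{rem:terrace_neighbour}, though the paper derives it from (i) and (vii) rather than from (i) and (ii) as you do), and then check the second condition directly using solid-above of $\Lambda_+$. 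The core monotonicity step---$y\in z_+$ so $y-e_i\in(z-e_i)_+$---is the same in both arguments; your version is marginally more self-contained in that it needs only (i), (ii), and the definitions, while the paper's is slightly terser once (vi) is already in hand.
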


\begin{proof}[Proof of Lemma \ref{lem:terrace}]
 Suppose that $\Lambda$ is a terrace.  Then $\Lambda=\lambda_C$ for some good set $C$.  

Let $y\in \Z^d$ and $i \in [d]$. Then $\gamma_{\sss C}(y,i)\in \Lambda$, so $\Lambda$ is saturated.  Next, $\Lambda\subset C$ and $C$ is bounded below so $\Lambda$ is also bounded below.    
Now let $z\in \Lambda$.  Then $z=\gamma_{\sss C}(y,i)$ for some $y\in \Z^d$ and $i \in [d]$, so $z-e_i\notin C$.  Since $C=C_+$ (solid above), $z-e_i\notin C_+$ and therefore $z-e_i \notin \Lambda_+$ by \eqref{+subset}.  This verifies (i).

For (ii), note that by \eqref{+subset}, $\Lambda_+\subset C_+$.  Since $C$ is solid above, $\Lambda_+\subset C$.  Next, if $z\in C$ and $i \in [d]$ then $z':=\gamma_{\sss C}(z,i)\in \Lambda$ and $z\in z'_+$, so $z \in \Lambda_+$.  Thus, $C\subset\Lambda_+$.

For (iii), suppose without loss of generality that $L$ is the line $\Z e_1$, and suppose that this line contains infinitely many points in $\Lambda$.  Then (since $\Lambda$ is bounded below) there exists some $j\in [d]$ and an increasing sequence $(n_r)_{r \in \N}$ such that $n_r e_1=\gamma_{\sss{\Lambda_+}}(n_re_1,j)$.  Now consider the line $L'=\Z e_1-e_j$.  Since $\Lambda$ is saturated, there exists $x \in L'\cap \Lambda\subset \Lambda_+$, and since $\Lambda_+$ is solid above, $x+ne_1\in \Lambda_+$ for every $n \in\N$.  Choose $n_r$ such that $n_r>x\cdot e_1$.  Then $n_r e_1=\gamma_{\sss{\Lambda_+}}(n_re_1,j)$ but $n_re_1-e_j\in \Lambda_+$, which is a contradiction.    

To prove (iv), it is trivial that $\Lambda \subset \Lambda_+\cap \Lambda_-$.  To prove the reverse inclusion, suppose that $x\in \Lambda_-$, so that $x\in z_-$ for some $z\in \Lambda$.  Then by the definition of a terrace and (ii), for some $j\in [d]$ we have $z-e_j \notin \Lambda_+$, and thus $x-e_j \notin \Lambda_+$.  Suppose also that $x\in \Lambda_+\setminus \Lambda$.  Then for each $i\in [d]$, $\gamma_{\sss{\Lambda_+}}(x,i)\ne x$, so there exists $n_i\in \N$ such that $x-n_ie_i\in \Lambda_+$, and since $\Lambda_+$ is solid above we have that $x-e_i\in \Lambda_+$ for each $i$. Thus $x-e_j\in \Lambda_+$, which is a contradiction.

For (v), let $z\in (\Lambda_+)^c$, and let $x\in z_-$.  Then $z \in x_+$, so if $x \in \Lambda_+$ then $z \in \Lambda_+$ as well.  This is a contradiction, hence $x \in (\Lambda_+)^c$, so $(\Lambda_+)^c$ is solid below.   For the second claim it suffices to show that $(\Lambda_+)^c\cup \Lambda$ is saturated, bounded above and solid below.  Note that $(\Lambda_+)^c\cup \Lambda$ is saturated since $\Lambda$ is (by (i)).  Let $x\in \Z^d$ and $i\in [d]$.  Then $\gamma_{\sss{\Lambda_+}}(x,i)\in \Lambda$  and $\{\gamma_{\sss{\Lambda_+}}(x,i)+ke_i:k\ge 0\}\subset \Lambda_+$ since $\Lambda_+$ is solid above.  Thus, $\{k \in \Z:x+ke_i \in (\Lambda_+)^c\}$ is bounded above.  By (iii) we also have that $\{k \in \Z:x+ke_i \in \Lambda\}$ is bounded above.  Thus $(\Lambda_+)^c\cup \Lambda$ is bounded above.
Suppose that $(\Lambda_+)^c\cup \Lambda$ is not solid below.  Then there exist $x\in (\Lambda_+)^c\cup \Lambda$ and $y \in x_-$ such that $y \notin (\Lambda_+)^c\cup \Lambda$ (i.e.~$y \in \Lambda_+\setminus \Lambda$).  Since $(\Lambda_+)^c$ is solid below we must have that $x\in \Lambda$.  Since $y \in x_-$ this means that $y\in \Lambda_-$.  Thus $y \in (\Lambda_+\setminus \Lambda) \cap (\Lambda_-)$, which contradicts (iv). 

For (vi), let $x\in\Lambda_+\setminus \Lambda$.  Then  $x-e_i\in \Lambda_+$ for every $i \in [d]$.  Let $y\in x_+$, so $y\in\Lambda_+$. If $y\in\Lambda$ then there is a $j$ with $y-e_j\notin\Lambda_+$. But then $x-e_j\notin\Lambda_+$ as well (as $(x-e_j)^{[i]}\le (y-e_j)^{[i]}$ for every $i$).  This gives a contradiction, so $y\in \Lambda_+\setminus \Lambda$ for every $y \in x_+$, as required.

For (vii), there is a first time $k$ the path enters $(\Lambda_+)^c$, so at $k-1$ it is at a point $x\in \Lambda_+$. Since $x+e_i\in \Lambda_+$ for each $i \in [d]$ the $k$th step must be from $x$ to some $x-e_j\in (\Lambda_+)^c$.  Since $x-e_j\in \Lambda_+^c$ we have that $\gamma_{\sss{\Lambda_+}}(x,j)=x$, so $x\in \lambda_{\Lambda_+}=\Lambda$.

Turning to (viii), if $\Lambda_+\cap x_-$ is infinite then there is an $i$ such that the set of reals $e_i\cdot (\Lambda_+\cap x_-)$ fails to be bounded below.   Let $n \in \N$.  Then there exists $y\in \Lambda_+\cap x_-$ with $y^{\sss[i]}<x^{\sss[i]}-n$, and since $\Lambda_+$ is solid above it follows that $x-ne_i\in y_+\subset\Lambda_+$.  Having this for every  $n\in\N$ contradicts the assumption that $\Lambda_+$ is bounded below.

To see (ix), note first that since $y \in x_+$, $[x,y]\subset x_+$ by definition.  Let $z\in[x,y]$.  Then $z\in x_+\subset\Lambda_+$, and $y \in z_+$.  If $z\in\Lambda_+\setminus\Lambda$, then (vi) would imply that $y\in z_+\subset\Lambda_+\setminus\Lambda$, which is impossible. So $z\in\Lambda$. 
\end{proof}
\begin{remark}
\label{rem:terrace_neighbour}
It follows from Lemma \ref{lem:terrace}(i),(vii) that if $\Lambda$ is a terrace then
\[ x\in \Lambda \iff x\in \Lambda_+ \text{ and }x-e_i\notin \Lambda_+ \text{ for some }i.\]
\end{remark}
\begin{lemma}
\label{lem:decreasing_terraces}
If $\Lambda^{\sss(0)}\supset \Lambda^{\sss(1)}\supset \dots$ is a decreasing sequence of terraces, then $\hat{\Lambda}:=\cap_{s=0}^\infty \Lambda^{\sss(s)}$ satisfies $\hat{\Lambda}=\lambda_G$, where $G:=\cap_{s=0}^\infty \Lambda^{\sss(s)}_+$ is good.  
\end{lemma}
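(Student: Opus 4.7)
The plan is to decompose the proof into two parts: first, verify that $G = \cap_{s=0}^\infty \Lambda^{\sss(s)}_+$ is good; and second, establish $\hat{\Lambda} = \lambda_G$ by double inclusion. Solidity above of $G$ is immediate (if $x \in G$ and $y \in x_+$ then $y \in \Lambda^{\sss(s)}_+$ for every $s$ by solidity of each factor, so $y \in G$), and boundedness below is inherited from $G \subset \Lambda^{\sss(0)}_+$, which is good by Lemma \ref{lem:terrace}(ii). The substantive piece of goodness is saturation of $G$, and this is what I expect to be the main obstacle: a priori the minimal position along an axis-parallel line could recede to infinity as $s$ grows.

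For saturation, I would fix $y \in \Z^d$ and $i \in [d]$ and set $k_s := \min\{k \in \Z : y + ke_i \in \Lambda^{\sss(s)}_+\}$, a well-defined finite integer since $\Lambda^{\sss(s)}_+$ is good. The sequence $(k_s)$ is non-decreasing because $\Lambda^{\sss(s)}_+$ is decreasing in $s$ by \eqref{+subset}. The key observation is that $z_s := y + k_s e_i$ lies in $\Lambda^{\sss(s)}$ itself: indeed $z_s \in \Lambda^{\sss(s)}_+$ while $z_s - e_i \notin \Lambda^{\sss(s)}_+$ by minimality of $k_s$, so Remark \ref{rem:terrace_neighbour} places $z_s \in \Lambda^{\sss(s)} \subset \Lambda^{\sss(0)}$. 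If $k_s \to \infty$, the $z_s$'s would be infinitely many distinct points of $\Lambda^{\sss(0)}$ on the axis-parallel line through $y$ in direction $e_i$, contradicting Lemma \ref{lem:terrace}(iii). Hence $(k_s)$ stabilizes at some $k^*$, and $y + k^* e_i \in \Lambda^{\sss(s)}_+$ for every $s$, so $y + k^* e_i \in G$.

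For the equality $\hat{\Lambda} = \lambda_G$, I would use the identity $\lambda_G = \{z \in G : z - e_i \notin G \text{ for some } i \in [d]\}$, which follows directly from the definition of $\gamma_{\sss G}$ on a good set. For the inclusion $\hat{\Lambda} \subset \lambda_G$, take $x \in \hat{\Lambda}$: since $x \in \Lambda^{\sss(0)}$, Remark \ref{rem:terrace_neighbour} supplies an $i$ with $x - e_i \notin \Lambda^{\sss(0)}_+$, which by \eqref{+subset} and the decreasing property forces $x - e_i \notin \Lambda^{\sss(s)}_+$ for every $s$; thus $x - e_i \notin G$ while $x \in G$. For the reverse $\lambda_G \subset \hat{\Lambda}$, take $z \in \lambda_G$ and pick $i$ and $s_0$ with $z - e_i \notin \Lambda^{\sss(s_0)}_+$. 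For every $s \geq s_0$ we have $z \in G \subset \Lambda^{\sss(s)}_+$ and $z - e_i \notin \Lambda^{\sss(s)}_+$, so Remark \ref{rem:terrace_neighbour} gives $z \in \Lambda^{\sss(s)}$. In particular $z \in \Lambda^{\sss(s_0)}$, and the decreasing property of the $\Lambda^{\sss(s)}$'s themselves then yields $z \in \Lambda^{\sss(s)}$ also for each $s \leq s_0$, giving $z \in \hat{\Lambda}$.
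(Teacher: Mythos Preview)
Your proof is correct and follows essentially the same route as the paper's. The only notable difference is in the saturation step: the paper shows directly that $\hat\Lambda$ is saturated (hence $G\supset\hat\Lambda$ is too) by observing that $L\cap\Lambda^{\sss(s)}$ is a decreasing sequence of nonempty finite sets, whereas you track the minimizers $k_s$ and argue they must stabilize; both arguments rest on Lemma~\ref{lem:terrace}(iii), and your double-inclusion argument for $\hat\Lambda=\lambda_G$ matches the paper's almost verbatim.
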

\begin{proof}
Clearly $G$ is bounded below since $\Lambda^{\sss(0)}_+$ is.  If $x\in G$ and $y \in x_+$ then $x\in \Lambda^{\sss(s)}_+$ for every $s$, and since $\Lambda^{\sss(s)}_+$ is solid above we have $y \in \Lambda^{\sss(s)}_+$ (for each $s$).  Thus $G$ is solid above.  To show that $G$ is saturated, it is sufficient to show that $\hat{\Lambda}$ is saturated (since $G \supset \hat{\Lambda}$).  Let $L$ be a line parallel to one of the axes.  Then $L\cap \Lambda^{\sss(s)}$ is finite, non-empty, and the sets $L\cap \Lambda^{\sss(s)}$ are decreasing in $s$.  Hence $\cap_{s=0}^\infty (L\cap \Lambda^{\sss(s)})=L \cap \hat{\Lambda}$ is non-empty as required.  We have shown that $G$ is good.  

To show that $\hat{\Lambda}=\lambda_G$, note that the $\Lambda^{\sss(s)}_+$ are decreasing as well.  Suppose that $x\in \cap_{s=0}^\infty \Lambda^{\sss(s)}$.  Then $x\in G$ and 
$x=\gamma_{\Lambda^{\sss(0)}_+}(x,i)$ for some $i \in [d]$, so $x-e_i \notin \Lambda^{\sss(0)}_+$ and hence $x-e_i \notin G$.
Therefore $x=\gamma_{G}(x,i)\in \lambda_G$.  

Conversely, suppose that $x\in \lambda_G$.  Then $x\in G$ and $x=\gamma_{G}(x,i)$ for some $i \in [d]$, so $x-e_i\notin G$.  Therefore $x \in \Lambda^{\sss(s)}_+$ for every $s$, and there exists $s_x$ such that for every $s>s_x$, $x-e_i \notin \Lambda^{\sss(s)}_+$.  Therefore $x\in\Lambda^{\sss(s)}$ for every $s>s_x$.  But the $\Lambda^{\sss(s)}$ are decreasing, so $x\in \Lambda^{\sss(s)}$ for every $s$, so $x\in \cap_{s=0}^\infty \Lambda^{\sss(s)}$.
\end{proof}

If $\Lambda$ is a terrace then define 
\begin{equation*}
H_{\Lambda}=\{x \in \Lambda: x+e_i \in \Lambda \text{ for every }i \in [d]\}.
\end{equation*}  
A terrace $\Lambda\subset \Z^d$ is said to be {\em irreducible} if $H_\Lambda$ is empty.  

%We will not actually require the following, but it illustrates the idea of an irreducible terrace.
\begin{lemma}
\label{lem:minus_terrace}
Let $\Lambda$ be a terrace.  Then $-\Lambda$ is a terrace if and only if $H_\Lambda=\varnothing$.
\end{lemma}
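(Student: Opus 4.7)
The plan is to treat the two directions separately, using the ``candidate good set'' supplied by Lemma \ref{lem:terrace}(v) for the forward direction and a direct application of Lemma \ref{lem:terrace}(i) to $-\Lambda$ for the converse.

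\textbf{($\Rightarrow$) Assuming $-\Lambda$ is a terrace.} Apply Lemma \ref{lem:terrace}(i) to $-\Lambda$: for every $z\in -\Lambda$ there exists $i\in[d]$ with $z-e_i\notin(-\Lambda)_+$. A short calculation shows $(-\Lambda)_+=-\Lambda_-$ (since $(-w)_+=-(w_-)$). Thus for each $y\in\Lambda$, taking $z=-y$ produces an $i$ with $y+e_i\notin\Lambda_-$; as $\Lambda\subset\Lambda_-$ this gives $y+e_i\notin\Lambda$. Hence no $y\in\Lambda$ can have all of its forward neighbours in $\Lambda$, i.e.\ $H_\Lambda=\varnothing$.

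\textbf{($\Leftarrow$) Assuming $H_\Lambda=\varnothing$.} The plan is to take the good set $G:=-((\Lambda_+)^c\cup\Lambda)$ furnished by Lemma \ref{lem:terrace}(v) and prove $\lambda_G=-\Lambda$, which will exhibit $-\Lambda$ as a terrace. Unwinding definitions, $z\in\lambda_G$ iff $-z\in(\Lambda_+)^c\cup\Lambda$ and there is some $i$ with $-z+e_i\in\Lambda_+\setminus\Lambda$ (the complement of $(\Lambda_+)^c\cup\Lambda$).

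For $-\Lambda\subset\lambda_G$: given $y\in\Lambda$, set $z=-y$. Then $-z=y\in(\Lambda_+)^c\cup\Lambda$, and the hypothesis $H_\Lambda=\varnothing$ yields an $i$ with $y+e_i\notin\Lambda$; since $y+e_i\in y_+\subset\Lambda_+$ automatically, this places $y+e_i$ in $\Lambda_+\setminus\Lambda$, as required.

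For $\lambda_G\subset-\Lambda$: fix $z\in\lambda_G$ and choose $i$ with $-z+e_i\in\Lambda_+\setminus\Lambda$. Either $-z\in\Lambda$ (in which case $z\in-\Lambda$ and we are done) or $-z\in(\Lambda_+)^c$. In the latter subcase, $-z+e_i\in\Lambda_+$ while $(-z+e_i)-e_i=-z\notin\Lambda_+$; Remark \ref{rem:terrace_neighbour} then forces $-z+e_i\in\Lambda$, contradicting $-z+e_i\in\Lambda_+\setminus\Lambda$. Thus only the first subcase occurs.

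The main technical point is the ``bad'' subcase $-z\in(\Lambda_+)^c$ in the $(\Leftarrow)$ direction; its resolution relies crucially on Remark \ref{rem:terrace_neighbour}, which characterises membership in a terrace by the existence of a single missing lower neighbour in $\Lambda_+$. Everything else is essentially bookkeeping using $(-\Lambda)_+=-\Lambda_-$ and the identities $\Lambda_+\cap\Lambda_-=\Lambda$ (Lemma \ref{lem:terrace}(iv)) and $\Lambda_-=(\Lambda_+)^c\cup\Lambda$, which also explains why the good set from Lemma \ref{lem:terrace}(v) is indeed $(-\Lambda)_+$.
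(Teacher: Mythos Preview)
Your proof is correct and follows essentially the same approach as the paper: the same good set $G=-((\Lambda_+)^c\cup\Lambda)$ from Lemma~\ref{lem:terrace}(v) is used for the backward direction, with the same two-inclusion verification that $\lambda_G=-\Lambda$. The only cosmetic differences are that you prove the forward direction directly rather than by contrapositive, and in the $\lambda_G\subset-\Lambda$ step you phrase the argument as a contradiction via Remark~\ref{rem:terrace_neighbour}, whereas the paper concludes directly that $-z\in\Lambda_+$ (implicitly using the same characterization) and then combines with $-z\in(\Lambda_+)^c\cup\Lambda$.
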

\begin{proof}
Suppose that $x\in H_\Lambda$.  Then $x\in \Lambda$ and $x+e_i\in \Lambda$ for every $i \in [d]$.  Thus $-x\in -\Lambda$ and $-x-e_i\in -\Lambda$ for every $i \in [d]$.  Thus by Lemma \ref{lem:terrace}(i), $-\Lambda$ is not a terrace.

Now suppose that $H_\Lambda$ is empty.  By Lemma \ref{lem:terrace}(v) we have that $K:=-((\Lambda_+)^c\cup \Lambda)$ is good.  We claim that $-\Lambda=\lambda_K$.  Let $u \in -\Lambda \subset K$.  Then $v:=-u\in \Lambda$ and since $H_\Lambda$ is empty there exists $i \in [d]$ such that $v+e_i\in \Lambda_+\setminus \Lambda$.  Therefore $v+e_i\notin (\Lambda_+)^c\cup \Lambda=-K$ and thus $u-e_i \notin K$.  Therefore $u=\gamma_{\sss K}(u,i)\in \lambda_K$.  Thus, $-\Lambda \subset \lambda_K$.  Now let $u \in \lambda_K$.  Then $u \in K$ and there exists $i\in [d]$ such that $u-e_i\notin K$.  Thus, $v=-u\in (\Lambda_+)^c\cup \Lambda$ and $v+e_i \notin (\Lambda_+)^c\cup \Lambda$.  Then $v+e_i \in \Lambda_+\setminus \Lambda$ which implies that $v=(v+e_i)-e_i \in \Lambda_+$.  Since also $v=\in (\Lambda_+)^c\cup \Lambda$ we conclude that $v\in \Lambda$ and therefore $u \in -\Lambda$.  This confirms that $\lambda_K\subset -\Lambda$.
\end{proof}

We'll only require a special case of the following construction (see Lemma \ref{specialpath}), but give the general version here for completeness.
 \begin{lemma}
\label{standardpath}
 Let $\Lambda$ be a terrace, and suppose $x,y\in\Lambda$. There is a path $z_{\sss(0)}, \dots,z_{\sss(n)}$ of points in $\Lambda$ such that 
 \begin{enumerate}[\normalfont(a)]
\item $z_{\sss(0)}=x$, $z_{\sss(n)}=y$;
\item the $\ell_1$ distance from $x$ to $z_{\sss(k)}$ is strictly increasing in $k$, and the $\ell_1$ distance from $z_{\sss(k)}$ to $y$ is strictly decreasing;
\item each increment $z_{\sss(j+1)}-z_{\sss(j)}$ has the form $e_i$, $-e_k$, or $e_i-e_k$ for some $i\neq k$; 
\item each coordinate $z_{\sss(j)}^{\sss[i]}$ is monotone in $j$.
\end{enumerate}

\end{lemma}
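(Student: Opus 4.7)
The plan is to induct on $D = |y - x|_1$, producing a first move $z_{(1)} \in \Lambda$ of one of the three allowed forms and then applying the inductive hypothesis to the pair $(z_{(1)}, y)$. The base $D = 0$ is trivial. Once the first move is constructed so that every coordinate it changes heads strictly toward the corresponding coordinate of $y$, conditions (a)--(d) propagate through the concatenation: (d) because no coordinate ever reverses direction, and (b) because each allowed step changes the $\ell_1$ distances from $x$ and to $y$ by $1$ or $2$ in the correct direction (given (d)).

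Set $I_\pm = \{i : \pm(y^{\sss[i]} - x^{\sss[i]}) > 0\}$. If $I_- = \varnothing$ then $y \in x_+$, and Lemma \ref{lem:terrace}(ix) gives $[x,y] \subset \Lambda$, so I take $z_{(1)} = x + e_i$ for any $i \in I_+$. If $I_+ = \varnothing$ then $y \in x_-$, and the symmetric statement $[y,x] \subset \Lambda$ follows because every $w \in [y,x]$ lies in $y_+ \cap x_- \subset \Lambda_+ \cap \Lambda_- = \Lambda$ by Lemma \ref{lem:terrace}(iv); take $z_{(1)} = x - e_k$ for any $k \in I_-$.

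The essential case is $I_+, I_- \neq \varnothing$. If some $i \in I_+$ satisfies $x + e_i \in \Lambda$, take this single step. Otherwise $x + e_i \in \Lambda_+ \setminus \Lambda$ for every $i \in I_+$ (since $\Lambda_+$ is solid above), and Remark \ref{rem:terrace_neighbour} in its contrapositive form then yields $x + e_i - e_j \in \Lambda_+$ for every $j \in [d]$. Applying Remark \ref{rem:terrace_neighbour} at $x$ itself produces some $m$ with $x - e_m \notin \Lambda_+$. If $m \in I_-$, pick any $i \in I_+$ and take the diagonal step $z_{(1)} = x + e_i - e_m$: we have $z_{(1)} \in \Lambda_+$ by the above, and $z_{(1)} - e_i = x - e_m \notin \Lambda_+$, so $z_{(1)} \in \Lambda$ by Remark \ref{rem:terrace_neighbour}. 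Otherwise $m \notin I_-$, and we may assume $x - e_k \in \Lambda_+$ for every $k \in I_-$ (else we are in the previous sub-case with $m = k$); for any $k \in I_-$, take $z_{(1)} = x - e_k$, noting that $z_{(1)} - e_m = (x - e_m) - e_k \notin \Lambda_+$ by the solid-below property of $(\Lambda_+)^c$ (Lemma \ref{lem:terrace}(v)), which yields $z_{(1)} \in \Lambda$.

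The main obstacle is the incomparable case: requiring monotone coordinates severely restricts the candidate first steps, while $\Lambda$ is a codimension-one subset of $\Lambda_+$. The local characterization of terrace points in Remark \ref{rem:terrace_neighbour}, combined with the solid-below property of $(\Lambda_+)^c$, are what let us verify that in every configuration at least one of the three allowed step types (single $+e_i$, single $-e_k$, or diagonal $e_i - e_k$) lands in $\Lambda$ and makes progress toward $y$.
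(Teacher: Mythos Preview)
Your proof is correct and follows essentially the same approach as the paper: an inductive construction producing one allowed step at a time, with the comparable cases $I_\pm=\varnothing$ handled via the box inclusion $[x,y]\subset\Lambda$ (equivalently Lemma~\ref{lem:terrace}(vi)/(ix) or (iv)), and the incomparable case handled by a short case analysis on which of $x\pm e_\ell$ lie in $\Lambda_+$. The only difference is organizational: the paper fixes a single pair $(i,k)$ with $i\in I_+$, $k\in I_-$ and branches on whether $x-e_k$ and $x-e_k+e_i$ are in $\Lambda_+$, whereas you first try all $+e_i$ steps, then locate a witness $m$ with $x-e_m\notin\Lambda_+$ and branch on whether $m\in I_-$.
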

\begin{proof} Property (d) follows from the others, as they together imply that the $e_i$ or $e_i-e_k$ are only used when $x^{\sss[i]}<z^{\sss[i]}<y^{\sss[i]}$ and the $-e_k$ or $e_i-e_k$ are only used when $x^{\sss[k]}>z^{\sss[k]}>y^{\sss[k]}$. We therefore will ignore (d) in the rest of the proof.

By Lemma \ref{lem:terrace}(i) we cannot have $x^{\sss[i]}<y^{\sss[i]}$ for every $i$, nor can we have $x^{\sss[i]}>y^{\sss[i]}$ for every $i$.  If $x^{\sss[i]}\le y^{\sss[i]}$ for every $i$, fix an $i$ with $x^{\sss[i]}<y^{\sss[i]}$ (if no such $i$ exists then $x=y$ and we are done) and let $z_{\sss(1)}=x+e_i\in\Lambda_+$.  Then $z^{\sss[j]}_{\sss(1)}\le y^{\sss[j]}$ for every $j$, so by Lemma \ref{lem:terrace}(vi) we see that $z_{\sss(1)}\in \Lambda$.  Iterating this argument, we keep stepping in direction $e_{i}$ till the $i$-coordinates agree. Then pick another coordinate to increase, etc. Eventually all coordinates will agree, and we've reached $y$. 

If $x^{\sss[i]}\ge y^{\sss[i]}$ for every $i$, construct a path from $y$ to $x$ as above, and traverse it backwards. 

Now assume that $x^{\sss[i]}<y^{\sss[i]}$ and $x^{\sss[k]}>y^{\sss[k]}$. Define
\begin{equation}
z_{\sss(1)}=\begin{cases}
x-e_k, &\text{ if }x-e_k\in \Lambda_+\\
x-e_k+e_i, & \text{ if }x-e_k\notin \Lambda_+, \, x-e_k+e_i\in\Lambda_+\\
x+e_i, & \text{ otherwise}.
\end{cases}
\end{equation}
If $x-e_k\in  \Lambda_+$ then $z_{\sss(1)}:=x-e_k\in \Lambda$ by Lemma \ref{lem:terrace}(v).  If $x-e_k\notin\Lambda_+$ and $x-e_k+e_i\in\Lambda_+$, then $z_{\sss(1)}:=x-e_k+e_i=\gamma_{\sss{\Lambda_+}}(x-e_k,i)\in \Lambda$.  Finally, if $x-e_k+e_i\notin\Lambda_+$ (and $x\in \Lambda_+$), then $x+e_i\in \Lambda_+$ and $z_{\sss(1)}:=x+e_i=\gamma_{\sss{\Lambda_+}}(x+e_i,k)\in\Lambda$, as required. Either this reaches $y$, or we can repeat one or the other of the above arguments, to find $z_{\sss(2)}$, and then $z_{\sss(3)}$, etc.
\end{proof}

 \begin{lemma}
\label{lem:connected}
Let $\Lambda$ be a terrace.  If $x,y\in\mathbb{Z}^d\setminus \Lambda_+$ or $x,y\in\Lambda_+\setminus\Lambda$ then there is a self avoiding nearest neighbour path connecting $x$ and $y$ which is disjoint from $\Lambda$. \end{lemma}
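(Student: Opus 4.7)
The plan is to treat the two cases $x,y\in(\Lambda_+)^c$ and $x,y\in\Lambda_+\setminus\Lambda$ in parallel, exploiting the solid-below property of $(\Lambda_+)^c$ (Lemma \ref{lem:terrace}(v)) in the first case and the solid-above property of $\Lambda_+\setminus\Lambda$ (Lemma \ref{lem:terrace}(vi)) in the second. The two cases are formally dual, so I would describe only the first in detail.

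Fix a common ``lower corner'' $z\in\Z^d$ by setting $z^{\sss[i]}=\min(x^{\sss[i]},y^{\sss[i]})$ for each $i\in[d]$, so that $z\in x_-\cap y_-$. Because $(\Lambda_+)^c$ is solid below and contains both $x$ and $y$, the entire cones $x_-$ and $y_-$ are contained in $(\Lambda_+)^c$; in particular $z\in(\Lambda_+)^c$. I would then construct a monotone nearest-neighbour path $\pi_1$ from $x$ to $z$ by decreasing one coordinate at a time (first coordinate $1$ down to $z^{\sss[1]}$, then coordinate $2$ down to $z^{\sss[2]}$, and so on), and analogously a monotone path $\pi_2$ from $z$ up to $y$. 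Every intermediate point of $\pi_1$ lies in $x_-$, and every intermediate point of $\pi_2$ lies in $y_-$, so the concatenation is a nearest-neighbour path from $x$ to $y$ contained in $(\Lambda_+)^c$, hence disjoint from $\Lambda$.

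The concatenated path need not be self-avoiding (for instance if $x^{\sss[j]}=y^{\sss[j]}$ for some $j$ the two staircases can cross), but any such path may be turned into a self-avoiding one by the standard loop-erasure procedure: whenever the path revisits a vertex, excise the subpath strictly between the two visits. What remains is a self-avoiding nearest-neighbour walk from $x$ to $y$ still contained in $(\Lambda_+)^c$. For the case $x,y\in\Lambda_+\setminus\Lambda$, I would perform the dual construction with $z^{\sss[i]}=\max(x^{\sss[i]},y^{\sss[i]})$ and monotone staircase paths of the opposite orientations; the intermediate points then lie in $x_+\cup y_+\subset\Lambda_+\setminus\Lambda$ by Lemma \ref{lem:terrace}(vi).

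I do not anticipate any substantial obstacle. The lemma is essentially a direct corollary of the solidity properties of $(\Lambda_+)^c$ and $\Lambda_+\setminus\Lambda$ from Lemma \ref{lem:terrace}, together with the trivial facts that a monotone staircase path towards a point $z\in x_-$ stays in $x_-$, and that loop-erasure preserves containment in whichever set the original path lies in.
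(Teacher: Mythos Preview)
Your proof is correct and follows essentially the same route as the paper: descend from $x$ to the coordinate-wise minimum $z$ (staying in $x_-\subset(\Lambda_+)^c$), then ascend from $z$ to $y$ (staying in $y_-\subset(\Lambda_+)^c$), with the dual construction for $\Lambda_+\setminus\Lambda$. Your loop-erasure step is in fact unnecessary---the two monotone staircases modify disjoint sets of coordinates and therefore meet only at $z$, so the concatenation is already self-avoiding---but it does no harm.
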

\begin{proof} Suppose $x,y\in\mathbb{Z}^d\setminus \Lambda_+$. Set $z_{\sss(0)}=x$. Given $z_{\sss(k)}$, take $z_{\sss(k+1)}=z_{\sss(k)}-e_i$ as long as we can find an $i$ with $z_{\sss(k)}^{\sss[i]}>y^{\sss[i]}$. Because $x\in (z_{\sss(k)})_+$ we have $z_{\sss(k)}\in\mathbb{Z}^d\setminus \Lambda_+$. Once we've run out of such $i$, start taking $z_{\sss(k+1)}=z_{\sss(k)}+e_j$ where $z_{\sss(k)}^{\sss[j]}<y^{\sss[j]}$. Because $y\in(z_{\sss(k)})_+$ we still have $z_{\sss(k)}\in\mathbb{Z}^d\setminus \Lambda_+$. Eventually this sequence reaches $y$. 

The same argument applies if $x,y\in\Lambda_+\setminus\Lambda$, except first we increase coordinates and then we decrease them. \end{proof}

\subsection{Corners}
A point $z$ is a {\it corner} of $C\subset \Z^d$ if $z\in C$ but for each $i$, $z-e_i\notin C$.  
In this section, we examine corners of terraces. We will really use local versions of these results (stated in the next section), but we will prove the terrace versions first, as they are both simpler and of more intrinsic interest.

\begin{lemma}
  \label{pushingup}
 Let $\Lambda$ be a terrace, with a corner at $z$.   Then:
 \begin{itemize}
\item[\normalfont(i)] $\Lambda_+\setminus\{z\}$ is good;
\item[\normalfont(ii)]  $\Lambda\setminus\{z\} \subset \lambda_{\Lambda_+\setminus\{z\}}\subset (\Lambda\setminus\{z\}) \cup \{z+e_j:j \in [d]\}$.
\end{itemize}
 \end{lemma}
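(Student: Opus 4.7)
The plan is to first observe a strengthening of the corner property, then verify the three axioms of a good set for $\Lambda_+\setminus\{z\}$, and finally chase the definition of $\lambda$ for part (ii) using Remark \ref{rem:terrace_neighbour}.

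The preliminary observation I would establish is that a corner satisfies $z-e_i\notin\Lambda_+$ (not merely $z-e_i\notin\Lambda$) for every $i$. Indeed, by Lemma \ref{lem:terrace}(vi) the set $\Lambda_+\setminus\Lambda$ is solid above, so if $z-e_i\in\Lambda_+\setminus\Lambda$ for some $i$ then $z\in(z-e_i)_+\subset\Lambda_+\setminus\Lambda$, contradicting $z\in\Lambda$. This strengthened form will be the main tool throughout.

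For part (i), bounded-below is inherited trivially from $\Lambda_+$. For solid-above, suppose $x\in\Lambda_+\setminus\{z\}$ and $y\in x_+$. Then $y\in\Lambda_+$ by solidity of $\Lambda_+$, and I need only rule out $y=z$. If $y=z$ and $x\ne z$, write $x=z-w$ with $w\in\Z^d_{\ge 0}\setminus\{0\}$ and pick $i$ with $w^{\sss[i]}>0$; then $x\in(z-e_i)_-$, and since $(\Lambda_+)^c$ is solid below (Lemma \ref{lem:terrace}(v)) and $z-e_i\notin\Lambda_+$ by the preliminary observation, I get $x\notin\Lambda_+$, a contradiction. For saturation, given $y\in\Z^d$ and $i\in[d]$, saturation of $\Lambda_+$ produces some $w=y+k_0e_i\in\Lambda_+$; if $w\ne z$ we are done, and if $w=z$ then $w+e_i=z+e_i\in z_+\subset\Lambda_+$ lies on the same line and is distinct from $z$.

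For part (ii), the first inclusion goes as follows: if $x\in\Lambda\setminus\{z\}$, pick $i$ with $x-e_i\notin\Lambda_+$ (Lemma \ref{lem:terrace}(i)); then $x-ke_i\in(x-e_i)_-\subset(\Lambda_+)^c$ for all $k\ge 1$ by Lemma \ref{lem:terrace}(v), so $x-ke_i\notin\Lambda_+\setminus\{z\}$ for all $k\ge 1$, confirming $x=\gamma_{\Lambda_+\setminus\{z\}}(x,i)\in\lambda_{\Lambda_+\setminus\{z\}}$. For the second inclusion, if $x\in\lambda_{\Lambda_+\setminus\{z\}}$ then $x\in\Lambda_+\setminus\{z\}$ and there is an $i$ with $x-e_i\notin\Lambda_+\setminus\{z\}$. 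Either $x-e_i=z$, giving $x=z+e_i\in\{z+e_j:j\in[d]\}$, or $x-e_i\notin\Lambda_+$, in which case Remark \ref{rem:terrace_neighbour} yields $x\in\Lambda$, hence $x\in\Lambda\setminus\{z\}$.

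The proof is essentially bookkeeping, so there is no deep obstacle; the only subtle step is the preliminary upgrade of the corner condition from $z-e_i\notin\Lambda$ to $z-e_i\notin\Lambda_+$, without which solid-above in (i) and the case analysis in (ii) would break, since one might otherwise worry that a neighbour $z-e_i$ sits in the interior region $\Lambda_+\setminus\Lambda$.
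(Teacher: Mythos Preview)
Your proof is correct and follows essentially the same approach as the paper. The only cosmetic difference is that you isolate the observation $z-e_i\notin\Lambda_+$ as a preliminary step (and then use solidity-below of $(\Lambda_+)^c$ to reach a contradiction with $x\in\Lambda_+$), whereas the paper derives the contradiction inline by showing $z-e_i\in x_+\subset\Lambda_+$ and then invoking Lemma~\ref{lem:terrace}(vi) to force $z\notin\Lambda$; the two routes are logically equivalent and use the same ingredients.
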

 \begin{proof}
$\Lambda_+\setminus\{z\}$ is bounded below because $\Lambda_+$ is, and it is saturated because $\Lambda_+$ was saturated and $\Lambda_+\setminus\{z\}$ contains $z+e_j$ for each $j \in [d]$. So let us prove that $\Lambda_+\setminus\{z\}$ is solid above.

Let $x\in \Lambda_+\setminus\{z\}$ and $y \in x_+$.  Then $y \in \Lambda_+$, and we need only show that $y\neq z$. So assume $z=y$. Then $z\in x_+$ and $x\neq z$, so we have $z\in (x+e_i)_+$ for some $i$. Hence $z-e_i \in x_+\subset \Lambda_+$.  But $z-e_i\notin\Lambda$, because $z$ is a corner, so $z-e_i \in \Lambda_+\setminus \Lambda$.   By Lemma \ref{lem:terrace} (vi) we conclude that $z\in \Lambda_+\setminus \Lambda$ which is a contradiction. So $z\neq y$ as required. 
 
 To prove (ii), note that if $x\in \Lambda \setminus \{z\}$ then $x=\gamma_{\sss{\Lambda_+}}(x,i)$ for some $i$.  Thus $x=\gamma_{\Lambda_+\setminus \{z\}}(x,i)$ as well, so $x\in \lambda_{\Lambda_+\setminus\{z\}}$.  This proves the first half of (ii).  For the second half, let  $x\in \lambda_{\Lambda_+\setminus\{z\}}$. Then $x\in\Lambda_+\setminus\{z\}$ and $x-e_j\notin\Lambda_+\setminus\{z\}$ for some $j$. The latter implies that either $x-e_j\notin\Lambda_+$ or $x-e_j=z$. In the first case, $x\in\Lambda\setminus\{z\}$ and in the second, $x=z+e_j$. Either way, (ii) is shown.
 \end{proof}
 For a terrace $\Lambda$ containing a corner $z\in \Lambda$, define 
 \begin{equation}
 \Lambda_{(\uparrow z)}=\lambda_{\Lambda_+\setminus\{z\}}.
 \end{equation}
 We describe $ \Lambda_{(\uparrow z)}$ as the terrace resulting from {\em pushing up} the corner $z$ of the terrace $\Lambda$. See e.g.~Figure \ref{fig:pushed_up}. 

\begin{figure}[h]
\centering
\begin{tikzpicture}
\draw (0,3) node[circle, fill=black]  {};
\draw (1,3) node[circle, fill=black]  {};
\draw (2,2) node[circle, fill=black]  {};
\draw (2,1) node[circle, fill=black]  {};
\draw (3,1) node[circle, fill=black]  {};
\draw (4,0) node[circle, fill=black] {};
\draw (5,0) node[circle, fill=black] {};
\draw (1+.35,3+.35) node {$ $};
\draw (2-.4,1) node {$x$};
\draw (4-.4,0) node {$z$};
\draw[thick, black] (0,3) -- (1,3);
\draw[thick, black] (1,3) -- (2,2);
\draw[thick, black] (2,2) -- (2,1);
\draw[thick, black] (2,1) -- (3,1);
\draw[thick, black] (3,1) -- (4,0);
\draw[thick, black] (4,0) -- (5,0);
\draw[thick, dashed, black] (2,2) -- (3,1);
\draw[thick, dashed, black] (3,1) -- (4,1) -- (5,0);
\end{tikzpicture}
\caption{Part of a terrace $\Lambda$ with $d=2$ in which $x\in H_\Lambda$ and $z$ are corners.  The dashed lines show the terrace with both $x$ and $z$ pushed up.}
\label{fig:pushed_up}
\end{figure}
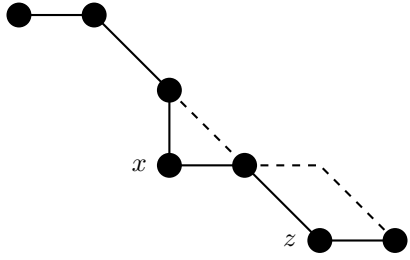

\begin{lemma}
\label{lem:weird}
Let $\Lambda$ be a  terrace and $x\in H_{\Lambda}$.  Then either $x$ is a corner of $\Lambda$ or there exists $i\in [d]$ such that $x-e_{i}\in H_{\Lambda}$.
\end{lemma}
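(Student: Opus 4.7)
My plan is to assume $x \in H_\Lambda$ is not a corner, and then directly exhibit an index $i$ with $x - e_i \in H_\Lambda$. Since $x$ is not a corner, by definition there is some $i \in [d]$ with $x - e_i \in \Lambda$. I would argue that this very $i$ works.

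To verify $x - e_i \in H_\Lambda$, I need to check $(x - e_i) + e_j \in \Lambda$ for each $j \in [d]$. The case $j = i$ is immediate since $x \in H_\Lambda \subset \Lambda$. For $j \neq i$, the key observation is geometric: $x - e_i$ and $x + e_j$ are both in $\Lambda$, and $x + e_j \in (x - e_i)_+$, so Lemma \ref{lem:terrace}(ix) applies with this pair of endpoints. It forces the entire box $[x - e_i, x + e_j] \subset \Lambda$. The point $x - e_i + e_j$ lies in this box (its $i$-coordinate is $x^{[i]} - 1$, its $j$-coordinate is $x^{[j]} + 1$, and all other coordinates equal $x^{[k]}$), so $x - e_i + e_j \in \Lambda$ as required.

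The argument is short and I don't anticipate a genuine obstacle; the only subtle point is recognizing that Lemma \ref{lem:terrace}(ix) is the right tool, since one might first try to use the fact that $\Lambda_+$ is solid above to get $x - e_i + e_j \in \Lambda_+$ and then try to rule out $x - e_i + e_j \in \Lambda_+ \setminus \Lambda$ via Remark \ref{rem:terrace_neighbour}, which is more cumbersome. Using the box lemma directly skips all of this, since it delivers membership in $\Lambda$ (not just $\Lambda_+$) for free.
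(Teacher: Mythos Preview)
Your proof is correct and essentially the same as the paper's: both pick an $i$ with $x-e_i\in\Lambda$ and then verify $x-e_i+e_j\in\Lambda$ for each $j$. The only cosmetic difference is that the paper invokes Lemma~\ref{lem:terrace}(vi) directly (noting $x-e_i+e_j\in\Lambda_+$ and that $x+e_j\in\Lambda$ rules out $x-e_i+e_j\in\Lambda_+\setminus\Lambda$), whereas you invoke Lemma~\ref{lem:terrace}(ix), which is itself proved from (vi) by exactly that reasoning.
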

\begin{proof}
If $x$ is not a corner of $\Lambda$ then there exists $i \in [d]$ such that $x-e_{i}\in \Lambda$.  Let $j \in [d]$.  Then $x+e_j\in \Lambda$ since $x\in H_{\Lambda}$.  Also $x-e_i+e_j\in \Lambda_+$ since $x-e_{i}\in \Lambda$.  These two facts together with Lemma \ref{lem:terrace}(vi) show that $x-e_i+e_j\in \Lambda$.  Since this holds for every $j\in [d]$ it follows that $x-e_i\in H_\Lambda$.
\end{proof}

\begin{lemma}
\label{lem:irreducible}
If $\Lambda$ is a terrace then:
\begin{enumerate}
\item[\normalfont(i)] if $x\in H_\Lambda$ and $x$ is a corner of $\Lambda$ then $\Lambda_{(\uparrow x)}= \Lambda\setminus\{x\}$;
\item[\normalfont(ii)] there exists an irreducible terrace $\Lambda'\subset \Lambda$.
%(T remark): I don't know if we actually need (ii), but it seems worth stating.
\end{enumerate}
\end{lemma}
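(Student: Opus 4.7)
For part (i), the plan is to apply Lemma \ref{pushingup}(ii), which sandwiches $\Lambda_{(\uparrow x)}$ between $\Lambda\setminus\{x\}$ and $(\Lambda\setminus\{x\})\cup\{x+e_j:j\in[d]\}$. Since $x\in H_\Lambda$ gives $x+e_j\in\Lambda$ for every $j$ (and $x+e_j\ne x$), each extra candidate point $x+e_j$ already lies in $\Lambda\setminus\{x\}$, so both inclusions collapse to the claimed equality.

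For part (ii), rather than an iterative push-up and take-intersection strategy (where the limit's irreducibility is not automatic), my plan is to propose the concrete candidate $\Lambda':=\Lambda\setminus H_\Lambda$ and prove it is an irreducible terrace. Irreducibility $H_{\Lambda'}=\varnothing$ is essentially tautological: if $x,x+e_i\in\Lambda'\subset\Lambda$ for all $i$, then $x\in H_\Lambda$, contradicting $x\in\Lambda\setminus H_\Lambda$. For the terrace property $\Lambda'=\lambda_{\Lambda'_+}$ with $\Lambda'_+$ good, solidness above is trivial, boundedness below is inherited from $\Lambda'_+\subset\Lambda_+$, and saturation along each axis-parallel line $y+\Z e_i$ follows by choosing the largest $k_{\max}$ with $y+k_{\max}e_i\in\Lambda$ (finite by Lemma \ref{lem:terrace}(iii)), since then $y+(k_{\max}+1)e_i\notin\Lambda$ forces $y+k_{\max}e_i$ out of $H_\Lambda$ and hence into $\Lambda'$. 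The easy inclusion $\Lambda'\subset\lambda_{\Lambda'_+}$ then follows from Remark \ref{rem:terrace_neighbour} applied to $\Lambda$ together with $\Lambda'_+\subset\Lambda_+$.

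The hard part will be the reverse inclusion $\lambda_{\Lambda'_+}\subset \Lambda'$. Given $x\in\lambda_{\Lambda'_+}$, I pick $i$ with $x-e_i\notin\Lambda'_+$ and a witness $z\in\Lambda'$ with $z\le x$; since $z$ cannot also witness $x-e_i\in\Lambda'_+$, a one-line argument forces $z^{\sss[i]}=x^{\sss[i]}$. I then split on whether $x\in\Lambda$ or $x\in\Lambda_+\setminus\Lambda$. In the first case, if $x\in H_\Lambda$ I pick $\ell$ with $z+e_\ell\notin\Lambda$, use Lemma \ref{lem:terrace}(ix) applied to $z$ and $x$ (both in $\Lambda$, with $x\in z_+$) to force $z^{\sss[\ell]}=x^{\sss[\ell]}$, and then invoke Lemma \ref{lem:terrace}(vi) to translate $z+e_\ell\in\Lambda_+\setminus\Lambda$ upward by $x-z\ge 0$, obtaining $x+e_\ell\in\Lambda_+\setminus\Lambda$, which contradicts $x\in H_\Lambda$. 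In the second case, Remark \ref{rem:terrace_neighbour} gives $x-e_i\in\Lambda_+$, so I pick $w\in\Lambda$ with $w\le x-e_i$, and $x-e_i\notin\Lambda'_+$ forces $w\in H_\Lambda$. I walk from $w$ to $x-e_i$ in unit coordinate steps; a simple induction using the $H_\Lambda$ property of the predecessor (to stay in $\Lambda$) together with the same ``no $\Lambda'$-witness below $x-e_i$'' argument (to stay in $H_\Lambda$) shows every intermediate point is in $H_\Lambda$. Hence the terminal $x-e_i\in H_\Lambda$, giving $x=(x-e_i)+e_i\in\Lambda$ and contradicting $x\in\Lambda_+\setminus\Lambda$.
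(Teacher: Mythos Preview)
Your proof of part (i) is correct and essentially identical to the paper's.

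For part (ii) your argument is correct, but the route is genuinely different from the paper's. The paper proceeds iteratively: it enumerates $H_\Lambda$, and at each stage uses Lemma~\ref{lem:weird} together with the finiteness in Lemma~\ref{lem:terrace}(viii) to locate a corner of the current terrace lying in its $H$-set, then deletes that corner via part~(i). This produces a strictly decreasing sequence of terraces, whose intersection is a terrace by Lemma~\ref{lem:decreasing_terraces}; a separate argument then shows that every point of $H_\Lambda$ is eventually excised, so the limit is irreducible. Your approach instead names the limit object directly as $\Lambda'=\Lambda\setminus H_\Lambda$ and verifies the terrace axioms by hand. The cost is that you must prove $\lambda_{\Lambda'_+}\subset\Lambda'$ from scratch, which you do via a neat case split exploiting Lemma~\ref{lem:terrace}(ix) and (vi) in Case~1 and an inductive walk in Case~2 (the key observation being that any $w\in\Lambda$ with $w\le x-e_i$ must lie in $H_\Lambda$, since otherwise it would witness $x-e_i\in\Lambda'_+$). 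The payoff is that you avoid the transfinite-flavoured bookkeeping of the paper's enumeration and do not need Lemmas~\ref{lem:decreasing_terraces} or~\ref{lem:weird} at all. In fact the two constructions coincide: the paper's iterative deletion removes exactly the points of $H_\Lambda$ and nothing else, so your $\Lambda'$ is the same set the paper arrives at, just identified explicitly from the outset.
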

\begin{proof}
From Lemma \ref{pushingup} we know that $\Lambda\setminus\{x\}\subset \Lambda_{(\uparrow x)}\subset (\Lambda\setminus\{x\}) \cup \{x+e_j:j \in [d]\}$. But the fact that $x\in H_\Lambda$ means that each $x+e_j$ already $\in\Lambda\setminus\{x\}$. Therefore (i) holds.

To prove (ii), let $\Lambda^{\sss(0)}:=\Lambda$.  If $H_{\Lambda^{\sss(0)}}$ is empty then $\Lambda^{\sss(0)}=\Lambda$ is irreducible and we are done.  Otherwise we may enumerate the vertices of $H_{\Lambda^{\sss(0)}}$ as $(x^{\sss(m)})_{m \in \N}$.  

We proceed recursively.  Given $\Lambda^{\sss(n)}\subset \Lambda^{\sss(0)}$ with $H_{\Lambda^{\sss(n)}}$ nonempty, we let $M_n=\min\{m:x^{\sss(m)}\in H_{\Lambda^{\sss(n)}}\}$ (which is finite, since $H_{\Lambda^{\sss(n)}}\subset H_{\Lambda^{\sss(0)}}$). If $y^{\sss(n,0)}:=x^{\sss(M_n)}$ is a corner of $\Lambda^{\sss(n)}$ then define $\Lambda^{\sss(n+1)}=\Lambda^{\sss(n)}\setminus \{x^{\sss(M_n)}\}$.  Otherwise, by Lemma \ref{lem:weird}, there exists $i_1\in [d]$ such that $y^{\sss(n,1)}:=x^{\sss(M_n)}-e_{i_1}\in H_{\Lambda^{\sss(n)}}$.  If $y^{\sss(n,1)}$ is a corner of $\Lambda^{\sss(n)}$ then define $\Lambda^{\sss(n+1)}=\Lambda^{\sss(n)}\setminus \{y^{\sss(n,1)}\}$.  Otherwise we continue to find points $y^{\sss(n,s)}$ for $s\in \N$ such that $y^{\sss(n,s)}\in H_{\Lambda^{\sss(n)}}$ and $y^{\sss(n,s)}=y^{\sss(n,s-1)}-e_{i_s}$ for some $i_s$.  Let $s_n=\inf\{s:y^{\sss(n,s)} \text{ is a corner of $\Lambda^{\sss(n)}$}\}$.  Then $s_n$ is finite by Lemma \ref{lem:terrace}(viii), because each $y^{\sss(n,s)}\in x^{\sss(M_n)}_-$. We define $\Lambda^{\sss(n+1)}=\Lambda^{\sss(n)}\setminus \{y^{\sss(n,s_n)}\}$.  By (i), $\Lambda^{\sss(n+1)}$ is a terrace.

Thus we have produced a decreasing sequence $(\Lambda^{\sss(n)})_{n \in \Z_+}$ of terraces with $\Lambda^{\sss(0)}=\Lambda$.  By Lemma \ref{lem:decreasing_terraces}, the intersection of these, $\Lambda'$ is a terrace.  We claim that $\Lambda'$ is irreducible, i.e.~that $H_{\Lambda'}=\varnothing$.  To see this, first note that the sets $H_{\Lambda^{\sss(n)}}$ are decreasing since $\Lambda^{\sss(n)}$ are, and similarly $H_{\Lambda'} \subset H_{\Lambda^{\sss(n)}}$ for each $n$ since $\Lambda' \subset \Lambda^{\sss(n)}$ for each $n$.  
To complete the proof it is sufficient to show that for each $m$, $x^{\sss(m)}\notin H_{\Lambda^{\sss(n)}}$ for all $n$ sufficiently large (since then $H_{\Lambda'}=\varnothing$).  To verify this, it is (without loss of generality) sufficient to show that $x^{\sss(1)}\notin H_{\Lambda^{\sss(n)}}$ for all $n$ sufficiently large.  But $x^{\sss(1)}_-\cap \Lambda_+$ is finite by Lemma \ref{lem:terrace}(viii).  It follows that 
$x^{\sss(1)}\notin H_{\Lambda^{\sss(n)}}$ for $n>K_1+1$, where $K_1$ is the cardinality of the set $x^{\sss(1)}_-\cap \Lambda_+$ (since by this time we have removed $ \{y^{\sss(1,s_1)}, y^{\sss(1,s_1-1)}, \dots, y^{\sss(1,0)}\}$).
\end{proof}

\subsection{Local terraces}
\label{sec:localterr}
In this section $Q$ denotes a box of the form $[a,b]$ for some $a,b\in \Z^d$ (recall \eqref{boxdef}).
Its {\it boundary} $\partial Q$ is the set of sites in $Q$ that are adjacent to sites in $\mathbb{Z}^d\setminus Q$.  Its {\em interior} is $Q^o:=Q\setminus \partial Q$.  

We require a local formulation for the notion of a terrace. 
\begin{definition}
\label{def:Qterrace}
We call $\qt\subset Q$ a {\it $Q$-terrace} if $\qt=\Lambda\cap Q$ for some terrace $\Lambda$. 
\end{definition}
Note that $\varnothing$ is by definition a $Q$-terrace for any $Q$, although it is not an interesting one.
 The first observation is elementary.

\begin{lemma}
\label{lem:plus_sets}
If $\Lambda$ is a terrace and $\qt:=\Lambda \cap Q\ne \varnothing$ then $\Lambda_+\cap Q=\qt_+\cap Q$.
\end{lemma}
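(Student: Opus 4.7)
The easy inclusion $\qt_+\cap Q\subseteq\Lambda_+\cap Q$ is immediate from $\qt\subseteq\Lambda$ together with \eqref{+subset}. The substance is the reverse inclusion, and the plan is to take any $x\in\Lambda_+\cap Q$ and produce a point $z\in\qt$ with $z\le x$; since $x\in Q$ this then forces $x\in\qt_+\cap Q$.

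To produce such a $z$, I would let $y_0$ be a coordinatewise-minimal element of the set $S:=\{y\in\Lambda_+\cap Q : y\le x\}$, which is nonempty (it contains $x$) and finite (being a subset of $Q$). If I can show $y_0\in\Lambda$, then $y_0\in\Lambda\cap Q=\qt$ and $y_0\le x$ will give $x\in\qt_+$, completing the argument.

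To verify $y_0\in\Lambda$, suppose for contradiction that $y_0\in\Lambda_+\setminus\Lambda$. Remark \ref{rem:terrace_neighbour} then gives $y_0-e_j\in\Lambda_+$ for every $j\in[d]$. Since $y_0-e_j\le y_0\le x$ automatically, minimality of $y_0$ forces $y_0-e_j\notin Q$ for every $j$; because $y_0-e_j$ differs from $y_0\in Q$ only in coordinate $j$, this is possible only when $y_0^{\sss[j]}=a^{\sss[j]}$, so $y_0=a$. It is here that the hypothesis $\qt\ne\varnothing$ enters: pick any $q\in\qt\subseteq\Lambda$, so $q\ge a=y_0$. Lemma \ref{lem:terrace}(ix) applied to $y_0\in\Lambda_+$ and $q\in\Lambda\cap (y_0)_+$ gives $[y_0,q]\subseteq\Lambda$, contradicting $y_0\notin\Lambda$.

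The main obstacle is precisely this final step: both the nonemptiness of $\qt$ (without which the statement simply fails, already for $d=1$) and Lemma \ref{lem:terrace}(ix) (the tool that promotes a pair of comparable points, one in $\Lambda_+$ and one in $\Lambda$, to an entire box contained in $\Lambda$) are essential for ruling out the rigid configuration $y_0=a\in\Lambda_+\setminus\Lambda$ that minimality alone does not exclude.
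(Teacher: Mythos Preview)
Your proof is correct. The approach is close in spirit to the paper's but uses different tools, so a brief comparison is worthwhile.

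Both arguments are descent arguments: given $x\in\Lambda_+\cap Q$, one looks for a point of $\Lambda\cap Q$ below $x$. The paper does this by running a coordinatewise-decreasing nearest-neighbour path from $x$ down to $a$ (which stays in $Q$), splitting off the case $a\in\Lambda_+$ first, and in the remaining case invoking Lemma~\ref{lem:terrace}(vii) to locate a crossing point in $\Lambda$. You instead pick a coordinatewise-minimal element $y_0$ of $\Lambda_+\cap Q\cap x_-$ and use Remark~\ref{rem:terrace_neighbour} to force $y_0\in\Lambda$ or $y_0=a$, dispatching the latter with Lemma~\ref{lem:terrace}(ix). Your argument avoids an explicit case split on whether $a\in\Lambda_+$ (it gets absorbed into the $y_0=a$ endgame), at the cost of invoking the slightly heavier (ix); the paper's path argument is more hands-on but needs only (vii). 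Either route is short and self-contained.
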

\begin{proof} Let $Q=[a,b]$. 
If $a \in \Lambda_+$ then Lemma \ref{lem:terrace}(i) and the fact that $\Lambda \cap Q$ is non-empty implies that $a\in \Lambda\cap Q=\qt$ and in this case the claim reduces to $Q=Q$.  Otherwise $a\notin \Lambda_+$.  Let $y\in \Lambda_+\cap Q$.  Find a nearest neighbour path, none of whose coordinates increases, from $y$ to $a$.  This path is necessarily in $Q$.  By Lemma \ref{lem:terrace}(vii) this path must hit some point $x\in \Lambda \cap Q=\qt$.  Therefore $y\in x_+\subset \qt_+$.  This proves that  $\Lambda_+\cap Q\subset \qt_+\cap Q$ and the reverse inclusion is trivial.
\end{proof}

A number of results that we proved for terraces immediately follow for $Q$-terraces, leading to the following result.
\begin{lemma}
\label{lem:QQresults}
Fix a box $Q$.
\begin{itemize}
\item[\normalfont(i)] If $\qt^{\sss(0)}\supset \qt^{\sss(1)}\supset \dots$ is a decreasing sequence of $Q$-terraces, then $\hat{\qt}:=\cap_{s=0}^\infty \qt^{\sss(s)}$ is a $Q$-terrace.
\item[\normalfont(ii)]  Let $\qt$ be a $Q$-terrace, and suppose $x,y\in\qt$. There is a path $z_{\sss(0)}, \dots,z_{\sss(n)}$ of points in $\qt$ such that 
 \begin{itemize}
\item[\normalfont(a)] $z_{\sss(0)}=x$, $z_{\sss(n)}=y$;
\item[\normalfont(b)] the $\ell_1$ distance from $x$ to $z_{\sss(k)}$ is strictly increasing in $k$, and the $\ell_1$ distance from $z_{\sss(k)}$ to $y$ is strictly decreasing;
\item[\normalfont(c)] each increment $z_{\sss(j+1)}-z_{\sss(j)}$ has the form $e_i$, $-e_k$, or $e_i-e_k$ for some $i\neq k$; 
\item[\normalfont(d)] each coordinate $z_{\sss(j)}^{\sss[i]}$ is monotone in $j$.
\end{itemize}
\item[\normalfont(iii)] Let $\qt$ be a $Q$-terrace.  If $x,y\in\mathbb{Z}^d\setminus \qt_+$ or $x,y\in\qt_+\setminus\qt$ then there is a self avoiding nearest neighbour path connecting $x$ and $y$ which is disjoint from $\qt$.  If in addition $x,y \in Q$ then there exists such a  path in $Q$.
\end{itemize}
\end{lemma}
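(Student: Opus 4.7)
My approach reduces each of (i)--(iii) to the corresponding terrace-level fact, using the defining property $\qt=\Lambda\cap Q$ of a $Q$-terrace; the finiteness of the box $Q=[a,b]$ does most of the remaining work.

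For (i), $Q$ is a finite set, so any decreasing chain of subsets $\qt^{\sss(0)}\supset\qt^{\sss(1)}\supset\cdots$ of $Q$ stabilises: there exists $s_0$ with $\qt^{\sss(s)}=\qt^{\sss(s_0)}$ for all $s\ge s_0$, and hence $\hat{\qt}=\qt^{\sss(s_0)}$ is a $Q$-terrace by hypothesis. For (ii), I would fix any terrace $\Lambda$ with $\Lambda\cap Q=\qt$; then $x,y\in\qt\subset\Lambda$, and Lemma~\ref{standardpath} yields a path $z_{\sss(0)},\dots,z_{\sss(n)}$ in $\Lambda$ satisfying (a)--(d). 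Coordinatewise monotonicity, property (d), forces $z_{\sss(j)}^{\sss[i]}\in[\min(x^{\sss[i]},y^{\sss[i]}),\max(x^{\sss[i]},y^{\sss[i]})]\subset[a^{\sss[i]},b^{\sss[i]}]$, so $z_{\sss(j)}\in Q$, and hence $z_{\sss(j)}\in\Lambda\cap Q=\qt$.

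For the first sentence of (iii): since $\qt\subset Q$ is finite and $d\ge 2$, removing $\qt$ from the lattice leaves a connected subgraph, so any two points of $\mathbb{Z}^d\setminus\qt$ (in particular $x$ and $y$, under either hypothesis) admit a self-avoiding nearest-neighbour path disjoint from $\qt$. For the $Q$-restricted second sentence I will give two explicit constructions patterned on Lemma~\ref{lem:connected}. If $x,y\in Q\setminus\qt_+$, the path that first decreases coordinates of $x$ to the coordinatewise minimum $x\wedge y$ and then increases them to $y$ is self-avoiding with each coordinate monotone between $x^{\sss[i]}$ and $y^{\sss[i]}$, hence stays in $Q$; at every step the current vertex is coordinatewise below $x$ or below $y$, and since $\qt_+$ is solid above its complement is solid below, so the path remains in $\qt_+^c\subset\qt^c$. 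If $x,y\in Q\cap(\qt_+\setminus\qt)$, then $x,y\in Q\cap\Lambda_+$ while $x,y\notin\qt=\Lambda\cap Q$ forces $x,y\in\Lambda_+\setminus\Lambda$; Lemma~\ref{lem:connected} applied to $\Lambda$ supplies the ``increase then decrease'' path in $\Lambda_+\setminus\Lambda$, which by the same coordinate-monotonicity argument lies in $Q$ and, being disjoint from $\Lambda\supset\qt$, is disjoint from $\qt$.

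The only substantive subtlety is the second case of (iii): the set $\qt_+\setminus\qt$ need not be solid above outside $Q$, because a terrace $\Lambda$ extending $\qt$ can contain points of $\qt_+\setminus\qt$, so the naive ``increase then decrease'' construction is not guaranteed to avoid $\qt$ when $x,y\notin Q$. This is what forces the fall-back to the connectedness argument for the first sentence; under the hypothesis $x,y\in Q$, however, both endpoints lie in $\Lambda_+\setminus\Lambda$, so Lemma~\ref{lem:connected} applies verbatim.
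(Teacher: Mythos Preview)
Your proof is correct and follows essentially the paper's approach: for (i) you use finiteness of $Q$ to make the sequence stabilise, for (ii) you invoke Lemma~\ref{standardpath} and use coordinate monotonicity to keep the path in $Q$, and for the $Q$-restricted part of (iii) you carry over the explicit monotone-path construction from Lemma~\ref{lem:connected}, exactly as the paper does. The one minor deviation is your handling of the unrestricted first sentence of (iii): rather than running the decrease/increase construction, you appeal to the fact that $\mathbb{Z}^d\setminus\qt$ is connected for $d\ge 2$ and finite $\qt$ --- a legitimate shortcut that sidesteps the subtlety you correctly flag about $\qt_+\setminus\qt$ failing to be solid above outside $Q$, at the price of losing control of where the path goes (which is harmless here).
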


\begin{proof}
A decreasing sequence of $Q$-terraces is trivially a $Q$-terrace (compare with Lemma \ref{lem:decreasing_terraces}), because the sequence terminates. This proves (i). Property (ii) holds with the same proof as Lemma \ref{standardpath}, since the paths constructed there stay in $Q$ if they start and end in $Q$. The proof of Lemma \ref{lem:connected} carries over in the same way, showing (iii).
\end{proof}

A set $A\subset Q$ is said to be \emph{$Q$-solid above} if $A_+\cap Q=A$.
\begin{lemma}
\label{lem:Qsolid}
If $B$ is solid above then $A:=B\cap Q$ is $Q$-solid above.
\end{lemma}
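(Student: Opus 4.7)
The plan is to verify the set equality $A_+ \cap Q = A$ by showing both inclusions, and the argument should be essentially immediate from the definitions together with monotonicity of the $(\cdot)_+$ operation recorded in \eqref{+subset}.

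For the inclusion $A \subset A_+ \cap Q$, observe that $A \subset Q$ by construction, and that trivially $A \subset A_+$ (every $z \in A$ satisfies $z \in z_+$). So $A \subset A_+ \cap Q$ with no use of the hypothesis on $B$.

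For the reverse inclusion $A_+ \cap Q \subset A$, I would take $y \in A_+ \cap Q$. Then $y \in Q$, and $y \in A_+ = (B \cap Q)_+$. Since $B \cap Q \subset B$, monotonicity \eqref{+subset} gives $A_+ = (B \cap Q)_+ \subset B_+$. But $B_+ = B$ because $B$ is solid above, so $y \in B$. Combined with $y \in Q$, this yields $y \in B \cap Q = A$, as required.

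There is no real obstacle here: the lemma is a direct unpacking of definitions, and its role is presumably to serve as a localisation tool in the subsequent geometric arguments about $Q$-terraces. The only thing one needs to be a little careful about is not to conflate $(B \cap Q)_+$ with $B_+ \cap Q$ (the latter containing the former but generally being strictly larger); the argument above uses only the direction $(B \cap Q)_+ \subset B_+$, which is what is actually needed.
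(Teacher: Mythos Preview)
Your proof is correct and follows essentially the same approach as the paper: both verify $A_+\cap Q\subset A$ by observing that any $y\in A_+\cap Q$ lies in $B_+=B$ (you via the monotonicity \eqref{+subset}, the paper by directly unpacking the definition of $A_+$) and hence in $B\cap Q=A$. The easy inclusion $A\subset A_+\cap Q$ is handled identically.
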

\begin{proof}
If $B$ is empty then the statement is vacuously true.  Since $A_+\cap Q \supset A$ we need only show that $A_+\cap Q\subset A$.   Let $x\in A_+\cap Q$.  Then $x \in A_+$ so $x\in y_+$ for some $y\in A=B\cap Q$, so $x \in B_+=B$ and $x \in Q$ as required.  
\end{proof}
\begin{lemma}
\label{lem:QL}
Let $\qt$ be a $Q$-terrace.  Then $Q\cap \qt_+\setminus \qt$ is $Q$-solid above.
\end{lemma}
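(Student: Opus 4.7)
The plan is to reduce this statement to two facts already established in the paper: Lemma \ref{lem:terrace}(vi), which says that $\Lambda_+\setminus\Lambda$ is solid above for any terrace $\Lambda$, and Lemma \ref{lem:plus_sets}, which identifies $\qt_+\cap Q$ with $\Lambda_+\cap Q$ whenever $\qt=\Lambda\cap Q$ is nonempty. Combining these with Lemma \ref{lem:Qsolid} gives the result essentially for free.

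In detail, I would first dispose of the trivial case $\qt=\varnothing$, in which $Q\cap \qt_+\setminus\qt=\varnothing$, which is $Q$-solid above by vacuity. Assume then that $\qt\ne\varnothing$ and write $\qt=\Lambda\cap Q$ for some terrace $\Lambda$, as permitted by Definition \ref{def:Qterrace}. Using $\qt\subset Q$, I would observe that
\[
Q\cap\qt_+\setminus\qt \;=\; (Q\cap\qt_+)\setminus\qt \;=\; (\Lambda_+\cap Q)\setminus(\Lambda\cap Q) \;=\; (\Lambda_+\setminus\Lambda)\cap Q,
\]
where the middle equality is exactly the content of Lemma \ref{lem:plus_sets}.

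Once the set is rewritten in this form, Lemma \ref{lem:terrace}(vi) says that $\Lambda_+\setminus\Lambda$ is solid above, and then Lemma \ref{lem:Qsolid} applied with $B=\Lambda_+\setminus\Lambda$ immediately yields that $A:=B\cap Q = Q\cap \qt_+\setminus\qt$ is $Q$-solid above, which is the desired conclusion.

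There is no real obstacle here; the only thing to be slightly careful about is the hypothesis $\qt\ne\varnothing$ in Lemma \ref{lem:plus_sets}, which is why I separate out the empty case. The rest is a straightforward bookkeeping of the two "$\cap Q$" operations.
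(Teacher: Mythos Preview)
Your proof is correct and follows essentially the same approach as the paper's own proof: both invoke Lemma \ref{lem:terrace}(vi), Lemma \ref{lem:Qsolid}, and Lemma \ref{lem:plus_sets} in the same way. You are in fact slightly more careful than the paper, since you explicitly separate out the case $\qt=\varnothing$ needed for the hypothesis of Lemma \ref{lem:plus_sets}.
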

\begin{proof}
Let $\Lambda$ be a terrace such that $\qt=Q\cap \Lambda$.  Then $\Lambda_+\setminus \Lambda$ is solid above by Lemma \ref{lem:terrace}(vi), and therefore $Q\cap \Lambda_+\setminus \Lambda$ is $Q$-solid above by Lemma \ref{lem:Qsolid}.  Now use Lemma \ref{lem:plus_sets} to see that $Q\cap \Lambda_+\setminus \Lambda=Q\cap \qt_+\setminus \qt$.
\end{proof}

 If $A$ is $Q$-solid above, define 
$$
\lambda^{\sss Q}_A=\{x\in A: \text{$\exists i\in[d]$ such that $x-e_i\in Q\setminus A$}\}.
$$
The following is the local analogue of parts of Lemma \ref{lem:terrace}.
\begin{lemma} Let $Q=[a,b]$, and let $A$ be $Q$-solid above.
\label{lem:Qterraceproperties}
\begin{enumerate}
\item[\normalfont(i)] If $A\neq Q$ then $(\lambda^{\sss Q}_A)_+=A_+$.
\item[\normalfont(ii)] $\lambda^{\sss Q}_A$ is a $Q$-terrace.
\end{enumerate}
\end{lemma}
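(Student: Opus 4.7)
My plan for (i) starts with the observation that the hypothesis $A\ne Q$, combined with the $Q$-solidity of $A$, forces the lower corner $a$ of $Q=[a,b]$ to lie outside $A$: if $a\in A$ then $a_+\cap Q=Q\subset A$, giving $A=Q$. The inclusion $(\lambda^{\sss Q}_A)_+\subset A_+$ is trivial from $\lambda^{\sss Q}_A\subset A$, so for the reverse direction I fix $y\in A_+$, pick any $y'\in A$ with $y\in y'_+$, and take $z$ to be a coordinate-wise minimal element of the finite nonempty set $A\cap y'_-$. Minimality rules out $z-e_i\in A$ for every $i$, and since $a\notin A$ some coordinate $z^{\sss[i]}>a^{\sss[i]}$, placing $z-e_i$ in $Q\setminus A$. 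Hence $z\in\lambda^{\sss Q}_A$ with $z\le y$, as required.

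For (ii), the trivial cases $A=\varnothing$ and $A=Q$ both yield $\lambda^{\sss Q}_A=\varnothing$, which is a $Q$-terrace by convention; otherwise, I plan to exhibit an explicit good set $C\subset\Z^d$ whose corresponding terrace $\lambda_C$ meets $Q$ in $\lambda^{\sss Q}_A$. The naive choice $C=A_+$ fails on two fronts: it is not saturated (lines far below $Q$ do not meet it), and points $x\in A$ with $x^{\sss[i]}=a^{\sss[i]}$ for some $i$ become spurious elements of $\lambda_{A_+}$ that need not lie in $\lambda^{\sss Q}_A$. Both defects can be cured by taking
\[
C \,=\, A_+\ \cup\ \bigcup_{\substack{x\in A,\,i\in[d]\\ x^{\sss[i]}=a^{\sss[i]}}}(x-e_i)_+\ \cup\ \Bigl\{y\in\Z^d : \textstyle\sum_{j}y^{\sss[j]}\ge M\Bigr\}
\]
for any integer $M>\sum_j b^{\sss[j]}$.

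The verification then proceeds as follows. Each summand is solid above and bounded below, so $C$ is too by \eqref{solidabove_union} and \eqref{boundedbelow_union}, and the third summand makes $C$ saturated. The crucial step is $C\cap Q=A$: the first summand meets $Q$ in exactly $A$ by $Q$-solidity, the third is disjoint from $Q$ by the choice of $M$, and each middle summand $(x-e_i)_+$ meets $Q$ only at points $y\in Q$ with $y^{\sss[j]}\ge x^{\sss[j]}$ for all $j$ (using $y^{\sss[i]}\ge a^{\sss[i]}=x^{\sss[i]}$), hence at points of $A_+\cap Q=A$. Given this, one checks that $\lambda_C\cap Q=\lambda^{\sss Q}_A$: for $x\in A$, the clause ``$\exists i,\ x-e_i\notin C$'' reduces to ``$\exists i,\ x-e_i\in Q\setminus A$'' because the directions with $x-e_i\notin Q$ (i.e.\ $x^{\sss[i]}=a^{\sss[i]}$) are precisely those neutralized by the middle summand via $x-e_i\in(x-e_i)_+\subset C$.

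The main subtlety I anticipate is designing the downward extension through the lower faces of $Q$: one needs to add enough to $C$ to prevent spurious corners there, but not so much that $C\cap Q$ grows past $A$. The hypothesis that $A$ is $Q$-solid above is exactly what renders the family $\{(x-e_i)_+:x\in A,\,x^{\sss[i]}=a^{\sss[i]}\}$ compatible with both demands.
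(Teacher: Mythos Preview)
Your proof is correct and follows essentially the same approach as the paper: for (i) you replace the paper's decreasing-path argument with an equivalent minimal-element argument, and for (ii) your set $C$ is literally the paper's $\Pi\cup A_+\cup B_+$ (with $B=\{x-e_i:x\in A,\ x-e_i\notin Q\}$) written out in different notation, and the verification that $C\cap Q=A$ and $\lambda_C\cap Q=\lambda^{\sss Q}_A$ proceeds identically.
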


\begin{proof}
If $A=\varnothing$ then (i) is trivially true. So let $A\neq Q$ be nonempty.  Then $\lambda^{\sss Q}_A$ is nonempty. 

Let $x\in (\lambda^{\sss Q}_A)_+$. Then there is a $y\in\lambda^{\sss Q}_A$ such that $x\in y_+$. But $y\in A$ so $x\in A_+$ too. Conversely, let $x\in A_+$, so there is a $y\in A$ with $x\in y_+$. Find a nearest neighbour path, none of whose coordinates increases, from $y$ to $a$. This path is necessarily in $Q$.  Moreover, it must exit $A$ somewhere, since our hypotheses imply that $a\notin A$. So it must include a $z\in A$ such that the next step is to some $z-e_i\in Q\setminus A$. Therefore $z\in\lambda^{\sss Q}_A$ and $x\in y_+\subset z_+\subset (\lambda^{\sss Q}_A)_+$. This shows (i).

Let $B=\{x-e_i: x\in A, x-e_i\notin Q\}$. We claim that 
\begin{equation}
\label{eqn:outsideboundary}
B_+\cap Q\subset A.
\end{equation} 
To see this, let $y\in B_+$, so $y\in (x-e_i)_+$ for some $x\in A$ such that $x-e_i\notin Q$.  Therefore $x^{[i]}=a^{[i]}$. If $y^{[i]}=x^{[i]}-1=a^{[i]}-1$ then $y\notin Q$. Whereas if $y^{[i]}\ge x^{[i]}$ then $y\in x_+\subset A_+$. Therefore $B_+\cap Q\subset A_+\cap Q=A$.

Now let $\Pi=\{x: \sum x^{[j]}>\sum b^{[j]}\}$, so $C=\Pi\cup A_+\cup B_+$ is good using \eqref{solidabove_union} and \eqref{boundedbelow_union} and the fact that $\Pi$ is saturated.   Thus $\lambda_C$ is a terrace.  We claim that $\lambda^{\sss Q}_A=Q\cap \lambda_C$ (which is sufficient to verify (ii)).  By \eqref{eqn:outsideboundary}, $C\cap Q=A_+\cap Q=A$, where we have also used the fact that $A$ is $Q$-solid above by assumption. If $x\in\lambda^{\sss Q}_A$ then $x\in A_+\cap Q=A$ and  there is an $i$ with $x-e_i\in Q\setminus A$. Therefore $x-e_i\notin A_+$ (since $A_+\cap Q=A)$, and by \eqref{eqn:outsideboundary} $x-e_i\notin C$.  But $x\in C$ and $x-e_i\notin C$ so (e.g.~by Lemma \ref{lem:terrace}(ii),(vii)) $x\in \lambda_C$. Conversely, if $x\in\lambda_C\cap Q$ then $x\in A$ and (by Lemma \ref{lem:terrace}(i),(ii)) there is an $i$ with $x-e_i\notin C$. If $x-e_i\notin Q$ then $x-e_i\in B$ which is impossible since $x-e_i\notin C$. Therefore $x-e_i\in Q\setminus A$, so $x\in\lambda^{\sss Q}_A$, and (ii) is proved.
\end{proof}

Recall from Definition \ref{def:good} that a \emph{good} set is one that is saturated, solid above, and bounded below.  Since a subset of $Q$ is obviously bounded below and not saturated, the natural way to define a $Q$-good set is to simply say that it is $Q$-solid above.

According to Lemma \ref{lem:terrace}(ii) there is a 1-1 correspondence between terraces and good sets.  The following Lemma indicates that this is not the case for $Q$-terraces and $Q$-good sets, so we include it to clarify the nature of  $Q$-terraces (even though it is not required for the proof of our main results).
In particular for any $Q$-good set $A$ there are minimal and maximal $Q$-terraces $\qt$ (namely $\lambda^{\sss Q}_A$ and $\lambda^{\sss Q}_A\cup (A \cap\partial Q)$) with $\qt_+\cap Q=A$. 

\begin{lemma} Let $Q=[a,b]$. Let $\qt$ be a $Q$-terrace.
\label{lem:QterracepropertiesB}
\begin{enumerate}
\item[\normalfont(i)] Set $\bar\qt=\lambda^{\sss Q}_{\qt_+\cap Q}$. Then $\bar\qt\subset \qt$, with $\qt\setminus\bar\qt\subset\partial Q$. 
\item[\normalfont(ii)] If $\qt$ is nonempty and $Q\not\subset\qt_+$ then $(\bar\qt)_+\cap Q=\qt_+\cap Q$.
\end{enumerate}
\end{lemma}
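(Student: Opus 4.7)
The plan is to pass through the global terrace picture: write $\qt = \Lambda \cap Q$ for a terrace $\Lambda$ and use Lemma \ref{lem:plus_sets} to identify $\qt_+ \cap Q$ with $\Lambda_+ \cap Q$ (when $\qt$ is nonempty). This converts the defining conditions of $\bar\qt = \lambda^{\sss Q}_{\qt_+ \cap Q}$ into assertions about membership in $\Lambda_+$, where Remark \ref{rem:terrace_neighbour} and Lemma \ref{lem:terrace} apply directly.

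For (i), the empty case $\qt = \varnothing$ forces $\bar\qt = \lambda^{\sss Q}_\varnothing = \varnothing$ and both conclusions hold vacuously. In the nonempty case, to get $\bar\qt \subset \qt$ I would take $x \in \bar\qt$, giving $x \in \qt_+ \cap Q = \Lambda_+ \cap Q$ together with some $i$ such that $x - e_i \in Q \setminus (\qt_+ \cap Q)$. Since $x - e_i \in Q$, this translates to $x - e_i \notin \Lambda_+$, so Remark \ref{rem:terrace_neighbour} yields $x \in \Lambda$, whence $x \in \qt$. For the second conclusion, suppose $x \in \qt \setminus \bar\qt$. Since $x \in \Lambda$, Remark \ref{rem:terrace_neighbour} produces a $j$ with $x - e_j \notin \Lambda_+$. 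If $x - e_j$ lay in $Q$, then $x - e_j \in Q \setminus (\qt_+ \cap Q)$ combined with $x \in \qt_+ \cap Q$ would force $x \in \bar\qt$, contradicting the choice of $x$; hence $x - e_j \notin Q$, which with $Q = [a,b]$ means $x^{\sss[j]} = a^{\sss[j]}$, so $x \in \partial Q$.

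For (ii), set $A := \qt_+ \cap Q$. Because $\Lambda_+$ is solid above and $A = \Lambda_+ \cap Q$, Lemma \ref{lem:Qsolid} makes $A$ a $Q$-solid-above set. The hypothesis that $\qt$ is nonempty gives $A \neq \varnothing$, and $Q \not\subset \qt_+$ gives $A \neq Q$. Lemma \ref{lem:Qterraceproperties}(i) then delivers $(\bar\qt)_+ = (\lambda^{\sss Q}_A)_+ = A_+$, so intersecting with $Q$ and invoking $Q$-solid-aboveness of $A$ gives $(\bar\qt)_+ \cap Q = A_+ \cap Q = A = \qt_+ \cap Q$.

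The main obstacle, such as it is, is purely bookkeeping: being careful to invoke Lemma \ref{lem:plus_sets} only when $\qt$ is nonempty, and verifying the nontriviality hypothesis $A \neq Q$ of Lemma \ref{lem:Qterraceproperties}(i) from the assumption $Q \not\subset \qt_+$. There is essentially no new geometric input beyond the terrace machinery already developed in this section.
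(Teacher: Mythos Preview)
Your proof is correct and follows essentially the same route as the paper's: both pass to a global terrace $\Lambda$ with $\qt=\Lambda\cap Q$, invoke Lemma~\ref{lem:plus_sets} to identify $\qt_+\cap Q$ with $\Lambda_+\cap Q$, use Remark~\ref{rem:terrace_neighbour} (equivalently Lemma~\ref{lem:terrace}(i)) for the terrace characterisation, and appeal to Lemma~\ref{lem:Qterraceproperties}(i) for part~(ii). The only cosmetic difference is that the paper treats the degenerate case $a\in\qt$ separately (showing $\qt\subset\partial Q$ directly), whereas your uniform argument absorbs this case automatically, since then $x-e_j\in Q$ would force $x-e_j\in a_+\subset\Lambda_+$, contradicting $x-e_j\notin\Lambda_+$.
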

\begin{proof}
If $\qt=\varnothing$ then (i) and (ii) are vacuously true.  Let $\Lambda$ be a terrace such that $\Lambda\cap Q=\qt$. If $a\in\qt \subset \qt_+$ then $\bar\qt=\varnothing$, and the only nontrivial statement among (i) and (ii) is that $\qt\subset\partial Q$. But $a\in\Lambda$ so  $(a+\1)_+\subset \Lambda_+\setminus\Lambda$ by Lemma \ref{lem:terrace}(i).  Since  $Q^o\subset(a+\1)_+$ we have $Q^o\cap \Lambda=\varnothing$ so $\qt\subset\partial Q$.

We may therefore assume that $\qt$ is nonempty (which shows that $a\in \qt_-$) and $a\notin\qt$.  If $a\in \qt_+$ then $a\in \Lambda_+ \cap \Lambda_-$ so by Lemma   \ref{lem:Qterraceproperties}(iv) $a\in \Lambda$ which gives a contradiction.  Therefore $a \notin \qt_+$.

To show (i), let $x\in \bar\qt$. Then $x\in\qt_+\cap Q\subset \Lambda_+$ and there is an $i$ with $x-e_i\in Q\setminus\qt_+=Q\setminus \Lambda_+$ (the equality holds by Lemma \ref{lem:plus_sets}). In particular, $x\in\Lambda$ (Remark \ref{rem:terrace_neighbour})  so in fact $x\in\qt$, showing that $\bar\qt\subset\qt$. 

If $x\in\qt\setminus\bar\qt$ then $x\in\qt_+\cap Q$, but for every $j\in [d]$, $x-e_j\notin Q\setminus (\qt_+)$. Also $x\in\Lambda$ so there is an $i$ with $x-e_i\in \Z^d\setminus (\Lambda_+)\subset \Z^d\setminus (\qt_+)
$. 
Thus, $x-e_i\in \Z^d \setminus Q$, so $x\in \partial Q$.  This proves (i).

To prove (ii), note that by Lemma \ref{lem:Qterraceproperties}(i), $(\bar\qt)_+=(\qt_+\cap Q)_+$.  Since $\qt\subset  \qt_+\cap Q$ we have $\qt_+\subset (\qt_+\cap Q)_+$.  Similarly since $\qt_+\cap Q\subset \qt_+$ we have that  $(\qt_+\cap Q)_+\subset (\qt_+)_+=\qt_+$.  Thus $\qt_+=(\qt_+\cap Q)_+=(\bar\qt)_+$.
\end{proof}

\begin{remark}
\label{rem:lambdacubed} 
If $\rt'=\lambda^{\sss Q}_{\rt_+\cap Q}$, where $\rt$ is non-empty and $Q\not\subset \rt_+$ then by Lemma \ref{lem:QterracepropertiesB}(ii) we have that 
\begin{equation}
\rt'=\lambda^Q_{(\rt')_+\cap Q}.
\end{equation} 
It therefore follows from the definition of $\lambda^{\sss Q}$ that if $y\in (\rt')_+$ satisfies $y-e_i\in Q\setminus (\rt')_+$ for some $i$, then $y \in \rt'$.
\end{remark}

\begin{lemma}
\label{lem:Qsolidlower}
If $G$ is $Q$-solid above and $z$ is a corner of $G$ then $G\setminus \{z\}$ is $Q$-solid above.
\end{lemma}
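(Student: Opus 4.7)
The plan is to argue directly from the definitions. We need to show that if $x \in (G\setminus\{z\})_+ \cap Q$ then $x \in G\setminus\{z\}$. Since $G\setminus\{z\} \subset G$, monotonicity gives $x \in G_+ \cap Q = G$ (using that $G$ is $Q$-solid above), so the only issue is ruling out $x = z$.

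So suppose for contradiction that $z \in (G\setminus\{z\})_+$. Then there exists $y \in G\setminus\{z\}$ with $z \in y_+$, i.e.\ $y^{\sss[j]} \le z^{\sss[j]}$ for all $j$. Because $y \neq z$, some coordinate is strict: pick $i \in [d]$ with $y^{\sss[i]} < z^{\sss[i]}$. I would then consider the point $z - e_i$. Its coordinates satisfy $(z-e_i)^{\sss[j]} \ge y^{\sss[j]}$ for every $j$ (for $j=i$ this uses $y^{\sss[i]} \le z^{\sss[i]} - 1$, and for $j \neq i$ we have $(z-e_i)^{\sss[j]} = z^{\sss[j]} \ge y^{\sss[j]}$), so $z - e_i \in y_+ \subset G_+$. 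Moreover, writing $Q = [a,b]$, the same chain of inequalities $a^{\sss[j]} \le y^{\sss[j]} \le (z-e_i)^{\sss[j]} \le z^{\sss[j]} \le b^{\sss[j]}$ places $z - e_i$ in $Q$. The hypothesis that $G$ is $Q$-solid above then yields $z - e_i \in G_+ \cap Q = G$, contradicting the assumption that $z$ is a corner of $G$.

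This gives $(G\setminus\{z\})_+ \cap Q \subset G\setminus\{z\}$, and the reverse inclusion is immediate since $G\setminus\{z\} \subset Q$. The only real step is the coordinate bookkeeping showing $z - e_i \in Q$, which is routine provided one is careful to use that $y$ itself is in $Q$ so that $y$ pins down a lower bound for $z - e_i$ consistent with $a$. I do not anticipate any obstacle — the lemma is essentially a local analogue of the ``pushing up'' part of Lemma~\ref{pushingup}(i), with the saturation and boundedness conditions stripped away.
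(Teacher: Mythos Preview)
Your proof is correct and follows essentially the same idea as the paper's: both reduce the question to showing $z\notin (G\setminus\{z\})_+$, and both derive a contradiction by producing an index $i$ with $z-e_i\in G_+\cap Q=G$. Your write-up is in fact cleaner than the paper's on the coordinate bookkeeping placing $z-e_i$ in $Q$ via the lower bound $y\in G\subset Q$.
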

\begin{proof}
Let $x \in G\setminus \{z\}$.  Then $z\notin x_+$ since $z$ is a corner of $G$ (so for each $i \in [d]$, $z\notin (x+e_i)_+$ and also $z\ne x$).  Let $y \in Q\cap x_+$.  If $y=x$ then $y \in G\setminus \{z\}$ and we are done.  Otherwise $y\in (x+e_j)_+$ for some $j$ with $x+e_j\in Q$.  Now $x,y\in Q$ and $y \in x_+$ implies that $[x,y]\subset Q$.  Since $y-e_j\in x_+$  and $y-e_j \in y_-$ we have that $y-e_j\in [x,y]$.  Thus $y-e_j\in x_+\cap Q\subset G$.  But  $y-e_j\in x_+$ also implies that $y-e_j\ne z$ so $y-e_j\in x_+\cap Q\setminus \{z\}$ as required.
\end{proof}
If $z$ is a corner of a $Q$-terrace $\qt$ we may define 
\begin{equation}
\qt_{(\uparrow z)}^{\sss Q}=\lambda^{\sss Q}_{Q\cap \qt_+\setminus\{z\}}.\label{Qpushupdef}
\end{equation}
This is a $Q$-terrace by Lemmas \ref{lem:Qsolid}, \ref{lem:Qterraceproperties}(ii) and \ref{lem:Qsolidlower}.  We call this \emph{the $Q$-terrace resulting from pushing up $z$}. If $\qt$ is a $Q$-terrace, we let 
\begin{equation}
H^{\sss Q}_\qt=\{x\in\qt: \text{for every $i\in[d]$, either $x+e_i\in\qt$ or $x+e_i\notin Q$}\}.\label{HQ}
\end{equation}
In particular, if $\Lambda$ is a terrace and $\qt=\Lambda\cap Q$ then $H_\qt^{\sss Q}\cap Q^o=H_{\Lambda}\cap Q^o$. We then have the following.
\begin{lemma}
\label{lem:Qterraceconsequences}
Let $Q=[a,b]$ and $\qt$ be a $Q$-terrace. 
\begin{enumerate}
\item[\normalfont(i)] If $z\in H_{\qt}^{\sss Q}$, then either $z$ is a corner of $\qt$ or there exists $i\in [d]$ such that $z-e_{i}\in H_{\qt}^{\sss Q}$;
\item[\normalfont(ii)] If $\qt=\lambda^{\sss Q}_G$ where $G$ is $Q$-solid above, and $z$ is a corner of $\qt$, then  
$\qt\setminus\{z\} \subset \qt^{\sss Q}_{(\uparrow z)}\subset Q\cap\big((\qt\setminus\{z\}) \cup \{z+e_j:j \in [d]\}\big)\subset Q\cap \qt_+$;
\item[\normalfont(iii)] In the setting of \emph{(ii)}, if in addition $z\in H_\qt^{\sss Q}$, then $\qt_{(\uparrow z)}^{\sss Q}= \qt\setminus\{z\}$.
\end{enumerate}
\end{lemma}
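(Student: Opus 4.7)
The plan is to prove the three parts in order, each one localizing a global result already established for terraces.

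For (i), I will imitate the proof of Lemma \ref{lem:weird} with care at the boundary of $Q=[a,b]$. Suppose $z\in H_\qt^{\sss Q}$ is not a corner of $\qt$, so $z-e_i\in\qt$ for some $i$. Fixing any $j\in[d]$, I need to show $z-e_i+e_j\in\qt$ or $z-e_i+e_j\notin Q$. The defining property of $H_\qt^{\sss Q}$ splits this into two cases. If $z+e_j\notin Q$ then since $z\in Q$ the only offending coordinate is the $j$-th, giving $z^{\sss[j]}=b^{\sss[j]}$; then either $j=i$ (so $z-e_i+e_j=z\in\qt$) or $j\ne i$ (so $(z-e_i+e_j)^{\sss[j]}=b^{\sss[j]}+1$, putting it outside $Q$). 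If instead $z+e_j\in\qt$, pick a terrace $\Lambda$ with $\qt=\Lambda\cap Q$; then $z-e_i\in\Lambda$ gives $z-e_i+e_j\in\Lambda_+$, while applying Lemma \ref{lem:terrace}(vi) to $z+e_j\in(z-e_i+e_j)_+\cap\Lambda$ rules out $z-e_i+e_j\in\Lambda_+\setminus\Lambda$, forcing $z-e_i+e_j\in\Lambda$; a coordinate check against $z-e_i$ and $z+e_j$ shows $z-e_i+e_j\in Q$, hence in $\qt$.

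For (ii), I will first identify $\qt_+\cap Q$ with $G$. Since $\qt$ is nonempty (it has a corner) and we must have $G\ne Q$ (otherwise $\lambda^{\sss Q}_G=\varnothing$), Lemma \ref{lem:Qterraceproperties}(i) gives $\qt_+=G_+$, and combined with the $Q$-solid-above property of $G$ this yields $\qt_+\cap Q=G$. So $\qt^{\sss Q}_{(\uparrow z)}=\lambda^{\sss Q}_{G\setminus\{z\}}$, which is a $Q$-terrace by the discussion preceding the lemma. The first inclusion is immediate: each $x\in\qt\setminus\{z\}$ has a witness $x-e_i\in Q\setminus G\subset Q\setminus(G\setminus\{z\})$. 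For the second, any $x\in\lambda^{\sss Q}_{G\setminus\{z\}}$ has a witness $x-e_j\in Q\setminus(G\setminus\{z\})$, and I split on whether $x-e_j=z$ (giving $x=z+e_j$) or $x-e_j\in Q\setminus G$ (giving $x\in\lambda^{\sss Q}_G=\qt$, with $x\ne z$). The third inclusion is immediate from $\qt\setminus\{z\}\subset\qt_+$ and $\{z+e_j\}\subset z_+\subset\qt_+$.

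For (iii), I apply (ii) and exploit the hypothesis $z\in H_\qt^{\sss Q}$ to collapse the upper bound. Each candidate extra point $z+e_j$ is either outside $Q$ (ruled out by $\qt^{\sss Q}_{(\uparrow z)}\subset Q$) or already in $\qt$ (hence in $\qt\setminus\{z\}$, since $z+e_j\ne z$). The upper bound thus reduces to $\qt\setminus\{z\}$, matching the lower bound. The main technical obstacle is the boundary case analysis in (i): one has to carefully distinguish ``missing because the point has left $Q$'' from ``missing because it was never in the underlying terrace.'' Once this distinction is handled, the rest of the argument is a direct localization of Lemma \ref{lem:weird} together with elementary chasing through the definitions of $\lambda^{\sss Q}$ and $H_\qt^{\sss Q}$.
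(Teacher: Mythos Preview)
Your proof is correct and follows essentially the same route as the paper's. The only cosmetic difference is in part~(i): where the paper invokes the local Lemma~\ref{lem:QL} (that $Q\cap\qt_+\setminus\qt$ is $Q$-solid above) to derive the contradiction, you instead unfold to a global terrace $\Lambda$ and appeal directly to Lemma~\ref{lem:terrace}(vi); since Lemma~\ref{lem:QL} is itself proved by exactly that unfolding, the two arguments are equivalent. Your explicit treatment of the boundary case $z+e_j\notin Q$ is in fact slightly more detailed than the paper's, which handles it implicitly via the box geometry.
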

\begin{proof}
To prove (i), let $z\in H_{\qt}^{\sss Q}$, and suppose that $z$ is not corner of $\qt$.  Then there exists $i \in [d]$ such that $z-e_{i}\in \qt$.  Let $j \in [d]$.  Then $z+e_j\in \qt$ or $z+e_j\notin Q$ since $z\in H_{\qt}^{\sss Q}$.  Also $z-e_{i}+e_j\in \qt_+$ since $z-e_{i}\in \qt$.  We claim that either $z-e_{i}+e_j\in \qt$ or $z-e_{i}+e_j\notin Q$.  To see this note that if $z-e_i+e_j\in Q\cap \qt_+\setminus \qt$ then by Lemma \ref{lem:QL}, $z+e_j\in Q\cap \qt_+\setminus \qt$ which gives a contradiction.  Hence the claim holds.  Since this holds for every $j\in [d]$ it follows that $z-e_{i}\in H_{\qt}^{\sss Q}$, proving (i).

To prove (ii) and (iii), if $G\ni a$ then $G=Q$, so $\qt=\varnothing$ and both statements are vacuous.  Thus we may assume that $a\notin G$.  Note that the last inclusion in (ii) is trivial.

By Lemma \ref{lem:Qterraceproperties}(i) we have $(\lambda^{\sss Q}_G)_+=G_+$ so $\qt_+\cap Q=G_+\cap Q=G$.  Therefore $\qt^{\sss Q}_{(\uparrow z)}=\lambda^{\sss Q}_{G\setminus \{z\}}$. Thus the first inclusion in (ii) reduces to 
\begin{align}
\{x\in G\setminus \{z\} &\text{ and }\exists i \in [d] \text{ such that }x-e_i \in Q\setminus G\}\label{aargh}\\
&\subset \{x \in G\setminus \{z\} \text{ and }\exists i \in [d] \text{ such that }x-e_i \in (Q\setminus G)\cup \{z\}\}\nonumber
\end{align}
which is trivial.  For the second inclusion, suppose that $x$ is in the right hand side of \eqref{aargh}.  If $x-e_i=z$ for some $i$ then $x=z+e_i$ and we are done.  Otherwise $x-e_i\in Q\setminus G$, so $x\in \lambda_G^{\sss Q}=\qt$ (and $x\ne z$).  Since also $\lambda^{\sss Q}_G\subset Q$ (since $G\subset Q$), this proves (ii).

By definition of $H_\qt^{\sss Q}$,  if $z\in H_\qt^{\sss Q}$ and   $z+e_j\in Q$ then $z+e_j \in \qt\setminus \{z\}$, so $\{z+e_j:j\in [d]\}\cap Q\subset  \qt\setminus \{z\}$ in this case).  Thus (iii) follows from (ii).
\end{proof}
Now we will fix a box $R$ and consider various smaller boxes $Q\subset R$. We define the relative boundary $\partial_{\sss R}Q$ of $Q$ in $R$ to be the set of sites in $Q$ that are adjacent to sites in $R\setminus Q$ and the relative interior $Q^{\sss R:o}=Q\setminus \partial_{\sss R} Q$. 

\begin{remark}
\label{rem:pushup_subset}
It follows from Lemma \ref{lem:Qterraceconsequences}(ii) that $(\qt^Q_{(\uparrow z)})_+\subset \qt_+$ since the only elements in $\qt^Q_{(\uparrow z)}$ that may not be in $\qt$ are in $z_+\subset \qt_+$.
\end{remark}

\begin{lemma}
\label{specialpath}
Let $Q\subset R$ be boxes,  and let $\rt$ be an $R$-terrace. Let $x\in\rt\cap Q$ and suppose $\rt$ has no corners in $Q^{\sss R:o}\cap x_-$.  Then there exists $y \in \rt\cap\partial_{\sss R} Q$ such that we may construct a path in $\rt\cap Q$ from $x$ to $y$ as in Lemma \ref{lem:QQresults}(ii), but using only moves of the form $-e_i$. For each $i$, all the $-e_i$ moves occur consecutively. 
\end{lemma}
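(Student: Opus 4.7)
The plan is to construct the path greedily: we walk from $x$ taking only $-e_i$-steps, using direction $-e_1$ as long as possible, then $-e_2$ as long as possible, and so on through $-e_d$. This fixed ordering automatically ensures that all $-e_i$-moves occur in a single consecutive block (namely, during ``phase $i$'').

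More precisely, fix a terrace $\Lambda$ with $\rt=\Lambda\cap R$, set $y_0=x$, and for $j=1,\dots,d$ in turn, starting from $y_{j-1}$, take steps in direction $-e_j$ one at a time, ending phase $j$ as soon as either (i) the next candidate point fails to lie in $\rt$, or (ii) the current point is already in $\partial_{\sss R}Q$. If (ii) ever triggers we halt the entire procedure there and take $y$ to be that boundary point. The construction stays in $\rt\cap Q$ throughout: whenever the current point is in $Q^{\sss R:o}$, any neighbour lies in $Q$ or outside $R$, and the stopping rule rejects the latter since $\rt\subset R$.

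The main obstacle --- and the heart of the argument --- is proving that the procedure actually halts on $\partial_{\sss R} Q$ rather than running through all $d$ phases. Suppose for contradiction that it does complete every phase without ever hitting $\partial_{\sss R} Q$, and set $z=y_d$, so $z\in \rt\cap Q^{\sss R:o}\cap x_-$. For each $j\in[d]$, at the end of phase $j$ we have $y_j-e_j\notin\rt$. If $y_j-e_j\notin R$ (the ``edge'' case, where $y_j^{\sss[j]}$ equals the lower boundary of $R$ in direction $j$), then since $z^{\sss[j]}=y_j^{\sss[j]}$ (no later phase touches coordinate $j$), we get $z-e_j\notin R$ and hence $z-e_j\notin\rt$. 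Otherwise $y_j-e_j\in R$, so $y_j-e_j\notin\Lambda$; combined with $y_j\in\Lambda$ and $y_j-e_j\in(y_j)_-\subset\Lambda_-$, Lemma \ref{lem:terrace}(iv) forces $y_j-e_j\notin\Lambda_+$. Since $z\le y_j$ coordinatewise we have $z-e_j\in(y_j-e_j)_-$, and Lemma \ref{lem:terrace}(v) (that $(\Lambda_+)^c$ is solid below) yields $z-e_j\notin\Lambda_+\supset\rt$, so $z-e_j\notin\rt$.

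Thus $z\in\rt$ satisfies $z-e_j\notin\rt$ for every $j$, i.e.\ $z$ is a corner of $\rt$ lying in $Q^{\sss R:o}\cap x_-$, contradicting the hypothesis. Hence the procedure terminates at some $y\in\rt\cap\partial_{\sss R} Q$. The resulting path has increments only of the form $-e_i$, each coordinate is monotone non-increasing, and the $\ell_1$ distances to $x$ and $y$ change by exactly $\pm 1$ at each step, verifying properties (a)--(d) of Lemma \ref{lem:QQresults}(ii); the required consecutive-moves property holds by design.
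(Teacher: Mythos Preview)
The proposal is correct and follows essentially the same greedy phase-by-phase construction as the paper's proof, with the same contradiction argument showing the terminal point would otherwise be a forbidden corner. The only cosmetic differences are that you halt when the \emph{current} point reaches $\partial_{\sss R}Q$ (rather than when the \emph{next} point does), and you run the contradiction through Lemma~\ref{lem:terrace}(iv),(v) on the ambient terrace $\Lambda$ rather than through Lemma~\ref{lem:QL}; both are equivalent routes to the same conclusion.
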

\begin{proof}
If $x\in\partial_{\sss R} Q$ then $y=x$ works. So assume $x\notin\partial_{\sss R} Q$. Therefore $x\in\rt\cap Q^{\sss R:o}$. 

Let $k_1=\inf\{k\ge 1:x-ke_1\notin \rt\cap Q^{\sss R:o}\}$ and $m_1=k_1-1$. Set $z^{\sss(k)}=x-ke_{1}\in\rt \cap Q^{\sss R:o}$ for $0\le k\le m_1$.     If $z^{\sss(m_1)}-e_1\in \partial_{\sss R} Q$ then we take $y=z^{\sss(m_1)}-e_1$ and our path is complete (with $k_1$ steps).  

Otherwise we proceed recursively.  For  $j<d$, given $k_1,\dots, k_j$, let $m_j=\sum_{i=1}^j (k_i-1)$, and assume that $z^{\sss(k)}\in\rt \cap Q^{\sss R:o}$ for $0\le k\le m_j$ gives a path using only moves $-e_1, \dots, -e_j$.  Let $k_{j+1}=\inf\{k\ge 1:z^{\sss(m_{j})}-ke_{j+1}\notin \rt\cap Q^{\sss R:o}\}$ and set $z^{\sss(m_j+k)}=z^{\sss(m_j)}-ke_{j+1}$ for $k\le k_{j+1}-1$. Take $m_{j+1}=m_j+k_{j+1}-1$. Note that it could happen that $k_{j+1}=1$, in which case $z^{\sss(m_{j+1})}=z^{\sss(m_j)}$.  

We may continue this process until one of two things happens. Either we find a $j$ with $z^{\sss(m_{j+1})}-e_{j+1}\in\partial_{\sss R} Q$, in which case we're done, with $1+m_{j+1}$ steps. Or we exhaust the possible $j\in [d]$ before we reach $\partial_{\sss R} Q$ and the construction terminates in a point $z^{\sss(m_d)}\in \rt\cap Q^{R:o}$. Assume the latter. We will see that this leads to a contradiction. 

We claim that for each $j$ either $z^{\sss(m_{j})}-e_{j}\notin R$ or $z^{\sss(m_{j})}-e_{j}\in R\setminus \rt_+$. To see this, assume $z^{\sss(m_{j})}-e_{j}\in R$.
The construction ensures that $z^{\sss(m_{j})}-e_{j}\notin \rt \cap Q^{\sss R:o}$, so either $z^{\sss(m_{j})}-e_{j}\notin Q^{\sss R:o}$ or $z^{\sss(m_{j})}-e_{j}\notin \rt$. Because $z^{\sss(m_{j})}-e_{j}\in R$, the former is ruled out by the assumption that $z^{\sss(m_{j})}-e_{j}\notin\partial_{\sss R} Q$. Therefore $z^{\sss(m_{j})}-e_{j}\in R\setminus \rt_+$ by Lemma \ref{lem:QL}), as claimed.

We know that $z^{\sss(m_{d})}\in z^{\sss(m_{j})}_{-}$ for each $j$. It follows that for each $j$ either $z^{\sss(m_{d})}-e_{j}\notin R$ or $z^{\sss(m_{d})}-e_{j}\in R\setminus \rt_+$. In particular, $z^{\sss(m_{d})}$ is a corner of $\rt$ in $Q^{R:o}\cap x_-$. 
But that is impossible, so in fact the construction must reach $\partial_{\sss R} Q$. 
\end{proof}

Let $Q\subset R$ be boxes and let $G$ be $R$-solid above (and $\neq\varnothing$ or $R$). Let $\rt=\lambda^{\sss R}_G$ and $A\subset G$. If $\rt$ has a corner $z_1$ in $Q^{\sss R:o}\setminus A$, then push it up to give a new $R$-terrace $\rt_1=\rt^{\sss R}_{(\uparrow z_1)}$. If $\rt_1$ has a corner $z_2$ in $Q^{\sss R:o}\setminus A$, then push it up to give a new $R$-terrace $\rt_2$. Repeat this process until we obtain an $R$-terrace with no corners in $Q^{\sss R:o}\setminus A$ (which could in principle $=\varnothing$, e.g. if $Q=R$ and $A=\varnothing$).   We will use the notation $\rt^{\sss Q,A}$ for any $R$-terrace obtained in this way (in principle it need not be unique).  Since $\lambda^{\sss R}_G\subset G$ and (from Lemma \ref{lem:Qterraceconsequences}(ii) or Remark \ref{rem:pushup_subset}) pushing up $\lambda^{\sss R}_G$ keeps one in $(\lambda^{\sss R}_G)_+$ we have that
\begin{equation}
\label{pushed_obvious}
\rt^{\sss Q,A}\subset G.
\end{equation} 
% We should say whether it is, one way or another, even if we don't prove it.

\begin{lemma}
\label{lem:minimalterrace}
Let $Q\subset R$ be boxes and let $G$ be $R$-solid above (and $\neq\varnothing$ or $R$) and $A\subset G$. Let $\rt=\lambda^{\sss R}_G$ and $A\subset G$. Then
\begin{enumerate}
\item[\normalfont(i)] $\rt^{\sss Q,A}=\rt$ off $Q$ (i.e.~$\rt^{\sss Q,A}\cap (R\setminus Q)=\rt\cap (R\setminus Q)$).
\item[\normalfont(ii)] $R\cap (\rt^{\sss Q,A})_+$ is the minimal set which is $R$-solid above, agrees with $G$ off $Q^{\sss R:o}$, and contains $ A$.  \item[\normalfont(iii)] If $x\in\rt^{\sss Q,A}\cap Q$ then there is a $z\in G\cap (A\cup\partial_{\sss R} Q)$ with $x\in z_+$. 
\end{enumerate}
\end{lemma}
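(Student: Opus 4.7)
The plan is to establish (i) first, use it to anchor a greedy downward walk proving (iii), and then deduce (ii) from both.

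For (i), I argue by induction on the push-up steps. By Lemma \ref{lem:Qterraceconsequences}(ii) applied with $R$ in place of $Q$, pushing up an $R$-corner $z$ of the current $R$-terrace changes the set only within $\{z\}\cup\{z+e_j:j\in[d]\}$. Since $z\in Q^{\sss R:o}$, the definition of $Q^{\sss R:o}$ forces each $z+e_j$ to lie either in $Q$ or outside $R$, so the change is invisible on $R\setminus Q$. Combining \eqref{Qpushupdef}, Lemma \ref{lem:Qsolidlower}, and Lemma \ref{lem:Qterraceproperties}(i) shows that each push-up sends the $R$-solid-above set $G_k$ of the current terrace to $G_k\setminus\{z\}$, so the procedure strictly shrinks this finite set and hence terminates. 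Thus $\rt^{\sss Q,A}$ agrees with $\rt$ on $R\setminus Q$.

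For (iii), the key claim is: if $x\in\rt^{\sss Q,A}\cap Q$ and $x\notin A\cup\partial_{\sss R}Q$, then there is some $i$ with $x-e_i\in\rt^{\sss Q,A}\cap Q$. The push-up procedure terminates only when no corners of the current $R$-terrace remain in $Q^{\sss R:o}\setminus A$, so every corner of $\rt^{\sss Q,A}$ lies in $A\cup(R\setminus Q^{\sss R:o})$. Since $x$ lies in $Q^{\sss R:o}\setminus A$, it is not a corner, so some $x-e_i\in\rt^{\sss Q,A}\subset R$; and since $x\in Q^{\sss R:o}$ has no neighbour in $R\setminus Q$, this $x-e_i\in Q$. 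Iterating produces a walk in $\rt^{\sss Q,A}\cap Q$ that strictly decreases $\sum x^{\sss[k]}$ at each step and hence terminates at some $y\in\rt^{\sss Q,A}\cap Q\cap(A\cup\partial_{\sss R}Q)$. By \eqref{pushed_obvious}, $y\in G$; and since every step of the walk is a $-e_i$, we have $x\in y_+$.

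For (ii), I first note that from (i) together with the identification of $R$-solid-above sets above, $R\cap(\rt^{\sss Q,A})_+=G\setminus S$ for some $S\subset Q^{\sss R:o}\setminus A$. This is $R$-solid above, agrees with $G$ off $Q^{\sss R:o}$, and (using $A\subset G$) contains $A$. For minimality, let $G'$ be any $R$-solid-above set that agrees with $G$ off $Q^{\sss R:o}$ and contains $A$. Given $x\in R\cap(\rt^{\sss Q,A})_+$, write $x\in z_+$ with $z\in\rt^{\sss Q,A}$. If $z\in R\setminus Q$, then by (i) $z\in\rt\subset G$ and $z\in R\setminus Q^{\sss R:o}$, so $z\in G'$. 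If $z\in Q$, then (iii) furnishes $w\in G\cap(A\cup\partial_{\sss R}Q)$ with $z\in w_+$; either $w\in A\subset G'$, or $w\in\partial_{\sss R}Q\subset R\setminus Q^{\sss R:o}$ where $G=G'$, so $w\in G'$. In both cases the $R$-solid-above property of $G'$ gives $x\in G'$, yielding $R\cap(\rt^{\sss Q,A})_+\subset G'$. The main delicate point is the bookkeeping in (i): that push-ups of corners in $Q^{\sss R:o}$ never escape $Q$. This fact is what keeps (i) true and what allows the descent step in (iii) to remain inside $\rt^{\sss Q,A}\cap Q$.
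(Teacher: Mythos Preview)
Your proof is correct, and the organization is somewhat cleaner than the paper's. Both arguments prove (i) the same way, via Lemma \ref{lem:Qterraceconsequences}(ii). The difference is in how (ii) and (iii) are handled. The paper introduces an explicit candidate set $W=\{x: x \in R\cap y_+ \text{ for some $y\in G\setminus Q$ or $y\in G\cap(A\cup\partial_{\sss R} Q)$}\}$ and proves the three-way equality $W=V=R\cap(\rt^{\sss Q,A})_+$; the inclusion $R\cap(\rt^{\sss Q,A})_+\subset W$ is where the work lies, and the paper invokes Lemma \ref{specialpath} (a structured downward-path lemma) to reach $\partial_{\sss R}Q$ from a point of the terrace lying below $x$. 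Part (iii) is then read off from $W$. You instead prove (iii) first by a direct greedy descent inside $\rt^{\sss Q,A}\cap Q$, stopping as soon as you hit $A\cup\partial_{\sss R}Q$; this uses only the terminal no-corner condition and avoids Lemma \ref{specialpath} altogether. Minimality in (ii) then follows cleanly from (i) and (iii). Your route is shorter and more self-contained; the paper's route has the advantage of giving the explicit formula $R\cap(\rt^{\sss Q,A})_+=W$, which is not stated in your version but is also never used elsewhere.
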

\begin{proof}
By Lemma \ref{lem:Qterraceconsequences} (ii),  pushing up corners in $Q^{\sss R:o}\setminus A$ will never alter $\rt$ outside $Q$. Therefore (i) holds.

Let $V$ be the minimal set which is $R$-solid above, agrees with $G$ off $Q^{\sss R:o}$, and is such that $V\supset  A$. $V$ is well defined, since intersections of sets which are $R$-solid above are also $R$-solid above (and $G$ is an instance of such a set). Let
$$
W=
\{x: x \in R\cap y_+ \text{ for some $y\in G\setminus Q$ or $y\in G\cap(A\cup\partial_{\sss R} Q)$}\}.
$$
We will show that 
\begin{align}
\label{VW}
W= V= R\cap (\rt^{\sss Q,A})_+.
\end{align}
This clearly implies (ii). 
  We shall see below that \eqref{VW} also implies (iii).    
  
First note that (by Lemma \ref{lem:Qsolid}) $R\cap (\rt^{\sss Q,A})_+$ is $R$-solid above.  Next, since $(\rt^{\sss Q,A})_+\subset G_+=(\lambda_G^{\sss R})_+$ (the former e.g.~since $\rt^{\sss Q,A}\subset G_+\cap R=G$ and the latter e.g.~by Lemma \ref{lem:Qterraceproperties}(i)) we have 
\begin{equation}
R\cap (\rt^{\sss Q,A})_+\cap (Q^{\sss R:o})^c\subset G \cap (Q^{\sss R:o})^c.\label{subsets1}
\end{equation}  
On the other hand, $G \cap (Q^{\sss R:o})^c\subset  G_+\cap R=  (\lambda_G^{\sss R})_+\cap R$, 
and since  pushing up corners in $Q^{\sss R:o}\setminus A$ will never excise points of $(Q^{\sss R:o})^c$ (e.g.~by Lemma \ref{lem:Qterraceconsequences}(ii)) this proves that 
\begin{equation}
R\cap (\rt^{\sss Q,A})_+\cap (Q^{\sss R:o})^c\supset G \cap (Q^{\sss R:o})^c.\label{subsets2}
\end{equation}
Combining \eqref{subsets1} and \eqref{subsets2} we see that  $R\cap (\rt^{\sss Q,A})_+$  agrees with $G$ off $Q^{\sss R:o}$.  If $x\in A\subset G\subset R$ then $x\in A\cap (\lambda^{\sss R}_G)_+$.  By repeated applications of Lemma \ref{lem:Qterraceconsequences}(ii), since $x\in A$ is never removed, we have that 
$x\in (\rt^{\sss Q,A})_+$.  Thus $R\cap (\rt^{\sss Q,A})_+$ contains $A$.

We have therefore shown that $R\cap (\rt^{\sss Q,A})_+$ is $R$-solid above, agrees with $G$ off $Q^{\sss R:o}$ and contains $A$.  Therefore $R\cap (\rt^{\sss Q,A})_+\supset V$.

If $U$ is $R$-solid above, agrees with $G$ off $Q^{\sss R:o}$, and contains $A$, then it contains any $z\in G\setminus Q$ and any $z\in G\cap(A\cup\partial_{\sss R} Q)$. Therefore it contains $W$ too. Since $U$ is otherwise arbitrary, we have that $V\supset W$.

To verify \eqref{VW} it remains to show that $W\supset R\cap (\rt^{\sss Q,A})_+$. So let $x\in R\cap (\rt^{\sss Q,A})_+\subset G$ 
where the inclusion was shown in \eqref{subsets1}. If $x\notin Q$ then (since $x\in x_+$) we immediately conclude that $x\in W$.  If $x\in A_+$ then $x\in W$ too, by definition.  
So assume that $x\in Q\setminus A_+$, so that $x_-\subset A^c$.

Write $Q=[a,b]$. Then $a\in R$, and if $a\in(\rt^{\sss Q,A})_+$ then (by \eqref{pushed_obvious}) $a\in G$, and $a\in\partial_{\sss R} Q$ (since otherwise $R$ would have to take the form $[a,b']$ for some $b'$, which would in turn imply that $G=R$, which is excluded by our hypotheses). Therefore $x\in W$ by definition, since  $x\in a_+$.

Therefore we may assume that $a\notin(\rt^{\sss Q,A})_+$. Take a nearest neighbour path from $x$ to $a$, none of whose coordinates increase. By (vii) of Lemma \ref{lem:terrace} and the fact that $x\in a_+$, this path includes some point  $y\in Q\cap \rt^{\sss Q,A}\cap x_-$. 
By construction, $y_-\subset x_-\subset A^c$, so there are no corners of $\rt^{\sss Q,A}$ in $Q^{\sss R:o}\cap y_-$. 
There is therefore a path as in Lemma \ref{specialpath} (beware of the different roles of the points $x$ and $y$ there) from $y$ to some $z\in \qt^{\sss Q,A}\cap \partial_{\sss R} Q\subset G\cap \partial_{\sss R} Q$. Since $x\in y_+\subset z_+$ it follows that $x\in W$, as required. This shows \eqref{VW} and hence (ii).

To see that \eqref{VW} implies (iii) as well, suppose $x\in\rt^{\sss Q,A}\cap Q$.  By \eqref{VW}, $x\in W$ so  $x\in z_+$ for some $z\in G\setminus Q$ or $z\in G\cap (A\cup \partial_{\sss R}Q)$
. Take a nearest neighbour path from $x$ to $z$, none of whose coordinates increase. Every point $y$ along it has $y\in z_+\cap x_-$,  so $y\in G$.   If $z \in G\cap(A\cup \partial_{\sss R} Q)$ then we are done.   
Otherwise $z\in G\setminus Q$, so the path passes through a point $z'\in G \cap \partial_{\sss R} Q\cap x_-$, which shows (iii).
\end{proof}

\subsection{Relation with $\mc{C}_o$}
\label{sec:terracesandCo}
Recall the definition of $\Omega_1$, and consider the model of Example \ref{exa:horthant}, where 
\begin{equation}
E=\mc{E}_+ \text{ and }F=\mc{E}.
\end{equation}  
Let $\mc{T}_o$ denote the (random) set of terraces $\Lambda$ such that $o \in \Lambda_+$ and $\Lambda \subset \Omega_1$.  Then we have the following, which is already essentially stated in Proposition \ref{prop:CoStructure}, but is now proved for completeness.
\begin{lemma}
\label{lem:terraceC}
For  Example \ref{exa:horthant}, if $p\in (p_c(d),1)$ then a.s.~$\mc{T}_o$ is non-empty and $\mathcal{C}_o=\cap_{\Lambda \in \mc{T}_o}\Lambda_+$.  
\end{lemma}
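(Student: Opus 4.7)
The plan is to extract an explicit element of $\mc{T}_o$ from Proposition \ref{prop:CoStructure}(ii), which immediately gives non-emptiness and one inclusion, and then to verify the other inclusion by showing that $\Lambda_+$ is ``forward-invariant'' under the half-orthant dynamics for every $\Lambda \in \mc{T}_o$.

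First, I would set $\Lambda_0 := \lambda_{\mc{C}_o}$. By Proposition \ref{prop:CoStructure}(ii), for $p \in (p_c(d),1)$ this is a.s.\ a terrace with $\Lambda_0 \subset \Omega_1$ and $(\Lambda_0)_+ = \mc{C}_o$. Since $o \in \mc{C}_o = (\Lambda_0)_+$, we have $\Lambda_0 \in \mc{T}_o$, so $\mc{T}_o$ is non-empty. Moreover, taking this particular $\Lambda_0$ in the intersection gives the easy inclusion
\[
\bigcap_{\Lambda \in \mc{T}_o} \Lambda_+ \;\subset\; (\Lambda_0)_+ \;=\; \mc{C}_o.
\]

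For the reverse inclusion it suffices to show that for any (deterministic) $\Lambda \in \mc{T}_o$ one has $\mc{C}_o \subset \Lambda_+$. The idea is to prove that starting from any $x \in \Lambda_+$, every out-arrow of $x$ in the half-orthant model lands again in $\Lambda_+$, so that $\mc{C}_o \subset \Lambda_+$ follows from $o \in \Lambda_+$ by induction along the directed paths. Split into two cases for $x \in \Lambda_+$:
\begin{itemize}
\item If $x \in \Lambda$, then $x \in \Omega_1$ (because $\Lambda \subset \Omega_1$), so $\mc{G}_x = \mc{E}_+$ and the only arrows from $x$ go to $x+e_i \in x_+ \subset \Lambda_+$ (using that $\Lambda_+$ is solid above).
\item If $x \in \Lambda_+ \setminus \Lambda$, then $\mc{G}_x$ is either $\mc{E}_+$ or $\mc{E}$, so we must check that all $2d$ neighbours lie in $\Lambda_+$. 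The neighbours $x+e_i$ are in $\Lambda_+$ by solidity. For the neighbours $x-e_i$, Remark \ref{rem:terrace_neighbour} (or equivalently Lemma \ref{lem:terrace}(i) applied to the characterisation of $\Lambda$) gives that $x \notin \Lambda$ forces $x-e_i \in \Lambda_+$ for every $i \in [d]$.
\end{itemize}
Thus $\Lambda_+$ is closed under following arrows of the half-orthant model, and $o \in \Lambda_+$ implies $\mc{C}_o \subset \Lambda_+$. Intersecting over $\Lambda \in \mc{T}_o$ finishes the proof.

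There is no real obstacle here: once Proposition \ref{prop:CoStructure}(ii) is in hand the content reduces to the two-case invariance check above, and the only subtle point is noting that interior points of $\Lambda_+$ (i.e.\ points of $\Lambda_+ \setminus \Lambda$) automatically have all their downward neighbours in $\Lambda_+$, which is exactly what the defining property of a terrace supplies via Remark \ref{rem:terrace_neighbour}.
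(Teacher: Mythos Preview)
Your argument for the reverse inclusion $\mc{C}_o\subset\Lambda_+$ is correct and is essentially the paper's own argument, rephrased as a forward-invariance check rather than a proof by contradiction via Lemma~\ref{lem:terrace}(vii); the content is identical.

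There is, however, a genuine circularity in your first step. You invoke Proposition~\ref{prop:CoStructure}(ii) to produce $\Lambda_0=\lambda_{\mc{C}_o}\in\mc{T}_o$, but in the paper's logical structure Proposition~\ref{prop:CoStructure} is only \emph{stated} in Section~\ref{sec2intro} and is \emph{proved} in Section~\ref{sec:terracesandCo} by appealing to Lemma~\ref{lem:terraceC} itself (the paper writes ``Lemma~\ref{lem:terraceC} directly implies (ii)''). So you cannot cite Proposition~\ref{prop:CoStructure}(ii) here. What the paper actually does in the proof of Lemma~\ref{lem:terraceC} is derive the needed facts directly from Theorem~\ref{thm:phasetransition}: finiteness of the $L^{(i)}_x$ gives saturation and bounded-below, solidity above is immediate from $e_i\in\mc{E}_+$, hence $\mc{C}_o$ is good; then Lemma~\ref{lem:terrace}(i),(ii) gives $(\lambda_{\mc{C}_o})_+=\mc{C}_o$ and shows every $x\in\lambda_{\mc{C}_o}$ has some $x-e_i\notin\mc{C}_o$, forcing $x\in\Omega_1$. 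To repair your proof you should replace the appeal to Proposition~\ref{prop:CoStructure}(ii) with this short direct argument from Theorem~\ref{thm:phasetransition}.
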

\begin{proof}
By Theorem \ref{thm:phasetransition} we have that $L_x^{(i)}$ is finite for every $x\in \Z^d$ and $i \in [d]$ and $\mc{C}_o=\big(\cup_x (x+L_x^{(i)})\big)_{+i}$  for each $i$.  Moreover, by definition of $L_x^{(i)}$ we have that $x+e_i+L^{(i)}_{x+e_i}=x+L^{(i)}_{x}$ for each $x,i$ so $\mc{C}_o=\big(\cup_{y:y\cdot e_i=0} (y+L_y^{(i)})\big)_{+i}$.  Since $e_i\in \mc{E}_+$ for each $i$ we have that $\mc{C}_o$ is solid above.  It is saturated since $L_x^{(i)}<\infty$ for each $x,i$ and it is bounded below since $L_y^{(i)}>-\infty$ for each $i$ and $y:y\cdot e_i=0$.  Therefore $\mc{C}_o$ is good.  The terrace $\lambda_{\mc{C}_o}\subset \mc{C}_o$ satisfies $o \in \mc{C}_o=(\lambda_{\mc{C}_o})_+$ by Lemma \ref{lem:terrace}(ii).  Moreover since each $x\in \lambda_{\mc{C}_o}\subset \mc{C}_o$ has a neighbour $x-e_i\notin (\lambda_{\mc{C}_o})_+=\mc{C}_o$ (by Lemma \ref{lem:terrace}(i)) we must have that $x\in \Omega_1$.  Thus $\lambda_{\mc{C}_o}\in \mc{T}_o$, and $\cap_{\Lambda \in \mc{T}_o}\Lambda_+\subset (\lambda_{\mc{C}_o})_+=\mc{C}_o$.

Now let $\Lambda \in \mc{T}_o$, so that $\Lambda \subset \Omega_1$, and suppose that  $x\in \mathcal{C}_o\setminus\Lambda_+$.   There is a (self-avoiding) path from $o$ to $x$ consistent with the environment.  By Lemma \ref{lem:terrace}(vii) this path exits $\Lambda_+$ by moving from a vertex $y_0$ in $\Lambda$ to a vertex $y_1=y-e_i\notin \Lambda_+$ for some $i$.  Thus $y_0 \notin \Omega_1$ which contradicts $\Lambda \subset \Omega_1$.  This proves that $\mathcal{C}_o\subset\Lambda_+$, so $\mathcal{C}_o\subset \cap_{\Lambda \in \mc{T}_o}\Lambda_+$.  
\end{proof}

\begin{proof}[Proof of Proposition \ref{prop:CoStructure}]
As remarked above, $\mc{C}_o$ is solid above since $e_i\in \mc{E}_+$ for each $i$. It is saturated for $p<1$ since in that case $L_x^{(i)}<\infty$ for each $x,i$. This proves (i). Lemma \ref{lem:terraceC} directly implies (ii). For $p<p_c$, Theorem \ref{thm:phasetransition}(I) shows that $\mc{C}_o=\Z^d$. But the final argument in the proof of Lemma \ref{lem:terraceC} then shows that $\mathcal{C}_o\subset\Lambda_+$ for every $\Lambda\in \mc{T}_o$, so in fact  $\mc{T}_o$ is empty. This proves (iii).
\end{proof}

%\begin{proof}[Direct proof of Theorem \ref{thm:percolationofterraces}]
%The event that there exists a terrace $\Lambda\subset \Omega_1$ is translation invariant.  Ergodicity of the environment then implies that this event has probability 0 or 1.  Standard arguments then imply that the event $\{\mc{T}_o\ne \varnothing\}$ has probability equal to that of the aforementioned event.  The proof of Corollary \ref{cor:terraceC} shows that if $\Lambda \in \mc{T}_o$ then $\mathcal{C}_o\subset\Lambda_+$.  

%For $p<p_c$, Theorem \ref{thm:phasetransition}(I) shows that $\mc{C}_o=\Z^d$, so $\mc{T}_o$ is a.s.~empty for $p<p_c$ and hence a.s.~there is no terrace $\Lambda\subset \Omega_1$ in this case.  For $p>p_c$ $\mc{T}_o$ is a.s.~non-empty.

%When $p=1$ and $d=2$, $\mc{C}_o=o_+$.  As $p\uparrow 1$ we have $\lambda_{\mc{C}_o(p)}\supset\{o,e_1,e_2\}$, so $\lambda_{\mc{C}_o}$ is in general not irreducible.  However the only irreducible terrace contained in $\lambda_{\mc{C}_o}$ is $\Lambda:=\lambda_{\mc{C}_o}\setminus\{o\}$, and $o \notin (\Lambda)_+$.  

We recall the definitions of two infinite self-avoiding paths in $\mc{C}_x$: 
Define the Wn path $\NWP{x}=(x^{\sss(n)})_{n \in \Z_+}$ from $x^{\sss(0)}=x\in \Z^2$ 
 by following the arrow $-e_1$ whenever possible, and otherwise $e_2$.  Similarly we  define the Se path $\SEP{x}:=(y^{\sss(n)})_{n \in \Z_+}$ from $x\in \Z^2$ by following the arrow $-e_2$ whenever possible, and otherwise $e_1$.  We use the concept of Wn and Se paths in copies of $\Z^2$ within $\Z^d$ to prove the following.
\begin{lemma}
\label{NWpaths}
Let $p\in (0,1)$ and $u,v\in\mathbb{Z}^d$ and $j \in [d]$. There is a $k\in\mathbb{Z}$ such that $v+ke_j\in\mathcal{C}_u$. If $u\in v_-$ then $k\le 0$ and $u\cdot e_j\ge (v+ke_j)\cdot e_j$.
\end{lemma}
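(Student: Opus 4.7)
The plan is to reduce to essentially 2D arguments by working within affine planes parallel to $e_i, e_j$ for various $i\neq j$, and to adjust the non-$j$ coordinates of $u$ one at a time using generalizations of the Wn and Se paths from the $d=2$ setting.

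First I would define, for each pair $i\neq j$ and each $x\in\Z^d$, analogues of the Wn and Se paths lying in the affine plane through $x$ parallel to $e_i, e_j$: the Wn-type path follows $-e_i$ at the current vertex $y$ if $-e_i\in\mathcal{G}_y$, and otherwise $+e_j$; the Se-type path follows $-e_j$ if available, and otherwise $+e_i$. At each $y$ we have $\mathcal{G}_y\in\{\mc{E}_+,\mc{E}_-\}$ (orthant) or $\mathcal{G}_y\in\{\mc{E}_+,\mc{E}\}$ (half-orthant), and in either case one of the two preferred arrows at $y$ is always available, so each path is infinite, self-avoiding (as both coordinates in question are monotone), and contained in $\mc{C}_x$. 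Because the path is self-avoiding, the environment $\mathcal{G}_y$ at each new vertex is a fresh i.i.d.\ sample independent of the past, and the two choices of step occur with probabilities $1-p$ and $p$. Since $p\in(0,1)$, the strong law of large numbers implies that a.s.\ the $e_i$-coordinate of the Wn-type path tends to $-\infty$ while its $e_j$-coordinate tends to $+\infty$; by symmetry a.s.\ the $e_i$-coordinate of the Se-type path tends to $+\infty$ while its $e_j$-coordinate tends to $-\infty$. Since both coordinates are monotone along the path, every integer value between the starting coordinate and the limit is attained.

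To prove the first claim, I would set $u_0=u$ and enumerate $[d]\setminus\{j\}$ as $i_1,\dots,i_{d-1}$. Inductively, given $u_{m-1}\in\mc{C}_u$, I observe that $u_{m-1}^{\sss[i_m]}=u^{\sss[i_m]}$ because the previous adjustments altered only the coordinates $i_1,\dots,i_{m-1}$ and $j$. If $u^{\sss[i_m]}\le v^{\sss[i_m]}$, I traverse the Se-type path in the $(e_{i_m},e_j)$-plane from $u_{m-1}$ until its $e_{i_m}$-coordinate first equals $v^{\sss[i_m]}$ (a trivial traversal if they are already equal); otherwise I use the Wn-type path in the same plane. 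The resulting vertex $u_m$ lies in $\mc{C}_{u_{m-1}}\subset\mc{C}_u$, differs from $u_{m-1}$ only in its $e_{i_m}$ and $e_j$ coordinates, and satisfies $u_m^{\sss[i_m]}=v^{\sss[i_m]}$. After $d-1$ iterations, $u_{d-1}$ agrees with $v$ in every coordinate except possibly $j$, so $u_{d-1}=v+ke_j$ for some $k\in\Z$, proving the first claim.

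For the second part, suppose $u\in v_-$; then $u^{\sss[i_m]}\le v^{\sss[i_m]}$ at every step, so I always take the Se-type alternative, whose $e_j$-coordinate is non-increasing. A simple induction gives $u_m^{\sss[j]}\le u^{\sss[j]}$ for each $m$, hence $u_{d-1}^{\sss[j]}=v^{\sss[j]}+k\le u^{\sss[j]}\le v^{\sss[j]}$, yielding $k\le u^{\sss[j]}-v^{\sss[j]}\le 0$ as required. The main obstacle will be verifying that the generalized Wn and Se paths exhaust every intermediate integer value of the coordinate being adjusted; this reduces, via the self-avoidance of these paths, to a standard SLLN argument on i.i.d.\ Bernoulli$(p)$ steps.
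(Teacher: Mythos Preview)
Your proposal is correct and follows essentially the same approach as the paper: both reduce to two-dimensional Wn/Se-type paths in planes $\Z e_i\times\Z e_j$ and adjust the non-$j$ coordinates of $u$ to match those of $v$ one at a time, using Se paths throughout when $u\in v_-$ so that the $e_j$-coordinate never increases. Your write-up is in fact more explicit than the paper's, which simply asserts that the Wn/Se path reaches the desired $e_i$-coordinate; you supply the justification via self-avoidance and the SLLN for the i.i.d.\ Bernoulli$(p)$ steps along the path.
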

Note that the existence of $k$ already follows from the fact that $\mc{C}_u$ is good. But the explicit construction in the proof provides us with additional detail.

\begin{proof}
Without loss of generality, $j=d$ and $u=o\in \Z^d$.  Follow the $\mathbb{Z}e_1\times\mathbb{Z}e_d$-Wn path or Se path from $o=(0,\dots, 0)$ to find $z^{\sss(1)}\in\mc{C}_o$ such that $z^{\sss(1)}=(v_1,0,\dots, 0,n_1)$ (use the Wn path (in which case $n_1\ge 0$)  if $v_1<0$  and otherwise the Se path (in which case $n_1\le 0$) ).  Now follow the $\mathbb{Z}e_2\times\mathbb{Z}e_d$- Wn path or Se path from $z^{\sss(1)}$ to find $z^{\sss(2)}\in\mathcal{C}_o$ such that $z^{\sss(2)}=(v_1,v_2,0,\dots, 0,n_2)$.  Continue this procedure until the first $d-1$ coordinates match. If $o\in v_-$ then we are using Se paths each time, which means that the $z^{\sss(i)}_d$ never increase. 
\end{proof}

For $x\in Q$ we let $\mc{C}^{\sss Q}_x$ be the set of $y\in Q$ that may be reached from $x$ by following arrows from our random environment that begin and end in $Q$.   Since $\mc{E}_+\subset \mc{G}_y$ for every $y$ it is trivial that $\mc{C}^{\sss Q}_x$ is $Q$-solid above.  Moreover, it is easily seen (c.f.~the proof of Lemma \ref{lem:terraceC}) that $Q$-terraces $\qt\subset \Omega_1$ block paths that remain in $Q$, as follows.
\begin{lemma}
\label{lem:Qterrace_crossing}
If $\qt\subset \Omega_1$ is a $Q$-terrace, with $x\in \qt_+\cap Q$ and $y\notin \qt _+\cap Q$, then $y\notin \mc{C}^{\sss Q}_x$. 
\end{lemma}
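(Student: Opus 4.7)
The plan is to argue by contradiction. Suppose a nearest-neighbour path $x = z_0, z_1, \ldots, z_n = y$ lies in $Q$ and is consistent with the random environment (so each $z_{k}-z_{k-1}\in \mc{G}_{z_{k-1}}$). I will isolate the first step that leaves $\qt_+$ and show that this step must originate at a vertex of $\qt$, where the environment only permits the arrows in $\mc{E}_+$; since that step is necessarily of the form $-e_j$, we get a contradiction.

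First I would dispose of the trivial case $\qt=\varnothing$ (for which $\qt_+=\varnothing$ and the hypothesis $x\in\qt_+$ is vacuous). Otherwise, Definition \ref{def:Qterrace} supplies a terrace $\Lambda$ with $\Lambda\cap Q=\qt$, and Lemma \ref{lem:plus_sets} gives the key identification $\Lambda_+\cap Q=\qt_+\cap Q$, letting me translate between the $Q$-terrace and its parent terrace.

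Let $k=\min\{j\ge 1:z_j\notin\qt_+\}$; this exists because $z_n=y\in Q\setminus\qt_+$. Then $z_{k-1}\in\qt_+\cap Q\subset\Lambda_+$, while $z_k\in Q\setminus\qt_+=Q\setminus\Lambda_+$, so $z_k\notin\Lambda_+$. Writing $e:=z_k-z_{k-1}\in\mc{G}_{z_{k-1}}$, if $e=e_i$ for some $i$ then $z_k\in(z_{k-1})_+\subset\qt_+$, a contradiction; hence $e=-e_j$ for some $j\in[d]$. Applying Lemma \ref{lem:terrace}(vii) to the two-step path $z_{k-1}\in\Lambda_+$, $z_k\in(\Lambda_+)^c$ gives $z_{k-1}\in\Lambda$, and therefore $z_{k-1}\in\Lambda\cap Q=\qt$.

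Since $\qt\subset\Omega_1$, we have $\mc{G}_{z_{k-1}}=E=\mc{E}_+$, which does not contain $-e_j$; this contradicts $e\in\mc{G}_{z_{k-1}}$ and rules out the existence of the path, giving $y\notin\mc{C}^{\sss Q}_x$. I do not expect any real obstacle: the argument is the local counterpart of the blocking step used in the proof of Lemma \ref{lem:terraceC}, and Lemma \ref{lem:plus_sets} is the only machinery needed to transfer Lemma \ref{lem:terrace}(vii) from $\Lambda$ to $\qt$.
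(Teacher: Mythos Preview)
Your proof is correct and follows essentially the same approach the paper indicates: the paper does not spell out a proof here but simply refers to the argument in Lemma~\ref{lem:terraceC}, and your contradiction argument (find the first exit from $\qt_+$, invoke Lemma~\ref{lem:terrace}(vii) via Lemma~\ref{lem:plus_sets} to land in $\qt\subset\Omega_1$, then observe the required $-e_j$ step is unavailable) is exactly that argument transported to the local setting.
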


We will need the following converse to this statement, which is a local version of Proposition \ref{prop:CoStructure}.
\begin{lemma}
\label{lem:localCoStructure}
Let $x\in Q=[a,b]$. Either $Q=\mc{C}^{\sss Q}_x$ or $\qt:=\lambda^{\sss Q}_{\mc{C}^{\sss Q}_x}$ is a $Q$-terrace satisfying $\qt\subset \mc{C}^{\sss Q}_x\cap \Omega_1$, such that $\qt_+=(\mc{C}^{\sss Q}_x)_+$.
\end{lemma}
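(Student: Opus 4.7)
The plan is to exploit the infrastructure for $Q$-terraces already assembled in Section \ref{sec:localterr}. As remarked immediately before the statement of the lemma, the set $A:=\mc{C}^{\sss Q}_x$ is $Q$-solid above, since $\mc{E}_+\subset\mc{G}_y$ for every $y$. If $A=Q$ we are in the first clause and nothing remains to be shown. Otherwise $A\neq Q$, so Lemma~\ref{lem:Qterraceproperties}(ii) immediately yields that $\qt=\lambda^{\sss Q}_A$ is a $Q$-terrace, and Lemma~\ref{lem:Qterraceproperties}(i) gives the identity $\qt_+=A_+=(\mc{C}^{\sss Q}_x)_+$.

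The containment $\qt\subset\mc{C}^{\sss Q}_x$ is built into the definition of $\lambda^{\sss Q}_A$, since every element of $\lambda^{\sss Q}_A$ lies in $A$. The only nontrivial assertion is $\qt\subset\Omega_1$. To establish it, let $y\in\qt$. By definition of $\lambda^{\sss Q}_A$, there exists $i\in[d]$ with $y-e_i\in Q\setminus A=Q\setminus\mc{C}^{\sss Q}_x$. Suppose, for a contradiction, that $y\notin\Omega_1$. In the half-orthant model of Example~\ref{exa:horthant} this forces $\mc{G}_y=\mc{E}$, which contains $-e_i$. Since $y\in\mc{C}^{\sss Q}_x$, there is a $Q$-path from $x$ to $y$; appending the step $y\to y-e_i$ yields a $Q$-path (note that $y-e_i\in Q$ by assumption) from $x$ to $y-e_i$, showing that $y-e_i\in\mc{C}^{\sss Q}_x$, a contradiction.

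The main obstacle is conceptual rather than technical: all the geometric content has been packaged into the $Q$-terrace framework, and once one sees that $\lambda^{\sss Q}_A$ is the right object to consider, the remainder reduces to a one-step path-extension argument, which is the local analogue of the reasoning used in Lemma~\ref{lem:terraceC}. No deformation of terraces or use of the more intricate Lemmas~\ref{lem:minimalterrace} or \ref{specialpath} is required.
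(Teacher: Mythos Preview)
Your proof is correct and follows essentially the same approach as the paper's own proof: both invoke Lemma~\ref{lem:Qterraceproperties} to obtain that $\qt$ is a $Q$-terrace with $\qt_+=(\mc{C}^{\sss Q}_x)_+$, and both derive $\qt\subset\Omega_1$ from the defining property of $\lambda^{\sss Q}_A$ together with the fact that a site $y\in\mc{C}^{\sss Q}_x$ with some $y-e_i\in Q\setminus\mc{C}^{\sss Q}_x$ must have $\mc{G}_y=\mc{E}_+$. Your version is slightly more explicit about the path-extension contradiction, but the argument is the same.
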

\begin{proof}
Since $\mc{C}^{\sss Q}_x$ is $Q$-solid above, if $a\in \mc{C}^{\sss Q}_x$ then $Q=\mc{C}^{\sss Q}_x$.  Otherwise $a \notin \mc{C}^{\sss Q}_x$ so $\qt:=\lambda^{\sss Q}_{\mc{C}^{\sss Q}_x}$ is a $Q$-terrace (Lemma \ref{lem:Qterraceproperties}(ii)), and for each $y \in \qt$ there exists $i \in [d]$ such that $y-e_i \in Q\setminus \mc{C}^{\sss Q}_x$, so $y \in \Omega_1$.  By Lemma \ref{lem:Qterraceproperties}, $\qt_+=(\mc{C}^{\sss Q}_x)_+$ as claimed.
\end{proof}

\begin{remark}
It is worthwhile noting that in general for $p\in (p_c,1)$ either: $\lambda_{\mc{C}_o}$ is irreducible, or; $o \in \lambda_{\mc{C}_o}$ and $H_{\lambda_{\mc{C}_o}}\subset o_+$. The following proves this in the setting of $Q$-terraces
\end{remark}
%proof of additional statement.  If $x\in o_+$ then for some $i$, $x+e_i\notin$ contains a neighbour 

\begin{lemma}
\label{lem:Qirreducible}
Let $x\in Q$ and set $\qt:=\lambda^{\sss Q}_{\mc{C}^{\sss Q}_x}$. Then either
\begin{enumerate}
\item[\normalfont(i)] $H_{\qt}^{\sss Q}=\varnothing$; or 
\item[\normalfont(ii)] $x\in\qt$ and $H_{\qt}^{\sss Q}\subset x_+$; or 
\item[\normalfont(iii)] $Q=\mc{C}^{\sss Q}_x$.
\end{enumerate}
\end{lemma}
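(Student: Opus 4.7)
The plan is to split into cases. By Lemma \ref{lem:localCoStructure}, either $Q=\mc{C}^{\sss Q}_x$ (giving case (iii)) or $\qt$ is a $Q$-terrace contained in $\Omega_1$ with $\qt_+\cap Q=\mc{C}^{\sss Q}_x$. Assume the latter. If $H^{\sss Q}_\qt=\varnothing$ we are in case (i), so assume $H^{\sss Q}_\qt\ne\varnothing$ and aim to establish (ii), i.e.\ that every $z\in H^{\sss Q}_\qt$ satisfies $x\in\qt$ and $z\in x_+$.

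Given such a $z$, I first produce a corner of $\qt$ that lies in $H^{\sss Q}_\qt$ and weakly below $z$. By Lemma \ref{lem:Qterraceconsequences}(i), either $z$ is such a corner or there is an $i_1$ with $z-e_{i_1}\in H^{\sss Q}_\qt$. Iterating gives a sequence $z=z_0,z_1,\ldots$ with $z_{k+1}=z_k-e_{i_{k+1}}\in H^{\sss Q}_\qt\subset Q$. Because coordinates strictly decrease and $Q$ is finite, the chain terminates at some $z^\star\in H^{\sss Q}_\qt$ which is a corner of $\qt$, and clearly $z\in(z^\star)_+$.

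The crux is to show $z^\star=x$. Since $z^\star\in H^{\sss Q}_\qt$ is a corner of $\qt$, Lemma \ref{lem:Qterraceconsequences}(iii) gives $\qt^\star:=\qt_{(\uparrow z^\star)}^{\sss Q}=\qt\setminus\{z^\star\}$, which is a $Q$-terrace with $\qt^\star\subset\qt\subset\Omega_1$. Using Lemma \ref{lem:QL} one verifies that $z^\star$ is in fact a corner of $\mc{C}^{\sss Q}_x$: if some $z^\star-e_i$ lay in $\mc{C}^{\sss Q}_x\setminus\qt=Q\cap\qt_+\setminus\qt$, the $Q$-solid-aboveness of that set would force $z^\star\in\qt_+\setminus\qt$, contradicting $z^\star\in\qt$. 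Hence $G^\star:=\mc{C}^{\sss Q}_x\setminus\{z^\star\}$ is $Q$-solid above (Lemma \ref{lem:Qsolidlower}), and combining $\qt^\star=\lambda^{\sss Q}_{G^\star}$ with Lemma \ref{lem:Qterraceproperties}(i) yields $(\qt^\star)_+\cap Q=G^\star=\mc{C}^{\sss Q}_x\setminus\{z^\star\}$. Now suppose for contradiction that $x\ne z^\star$. Then $x\in\mc{C}^{\sss Q}_x\setminus\{z^\star\}=(\qt^\star)_+\cap Q$, while $z^\star\in\mc{C}^{\sss Q}_x$ lies outside $(\qt^\star)_+\cap Q$; so Lemma \ref{lem:Qterrace_crossing} applied to the blocking $Q$-terrace $\qt^\star$ forces $z^\star\notin\mc{C}^{\sss Q}_x$, a contradiction. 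Therefore $z^\star=x\in\qt$ and $z\in x_+$, establishing (ii).

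The main obstacle is the geometric bookkeeping required to pass from a corner of $\qt$ to a corner of $\mc{C}^{\sss Q}_x$, so that pushing up $z^\star$ excises exactly one point from the blocking set. Once that is in hand, applying Lemma \ref{lem:Qterrace_crossing} to the strictly smaller blocking terrace $\qt^\star$ immediately yields the required contradiction with $z^\star\in\mc{C}^{\sss Q}_x$.
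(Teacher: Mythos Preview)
Your proof is correct, but it takes a genuinely different route from the paper's argument.

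The paper argues directly from the random-environment structure: given $z\in H^{\sss Q}_\qt$, every $z+e_i$ that lies in $Q$ is in $\qt\subset\Omega_1$ and so has environment $\mc{E}_+$, meaning any connecting path in $\mc{C}^{\sss Q}_x$ must reach $z$ from some $z-e_{j_1}$. One then checks that this predecessor is again in $H^{\sss Q}_\qt$, and iterates (tracing the path backwards) until reaching $x$. This gives $z\in x_+$ directly.

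Your approach instead descends from $z$ to a corner $z^\star\in H^{\sss Q}_\qt$ via Lemma~\ref{lem:Qterraceconsequences}(i), then pushes that corner up to obtain a strictly smaller blocking $Q$-terrace $\qt^\star=\qt\setminus\{z^\star\}\subset\Omega_1$, and invokes Lemma~\ref{lem:Qterrace_crossing} to force $z^\star=x$. The intermediate step---upgrading ``corner of $\qt$'' to ``corner of $\mc{C}^{\sss Q}_x$'' via Lemma~\ref{lem:QL}, so that $(\qt^\star)_+\cap Q=\mc{C}^{\sss Q}_x\setminus\{z^\star\}$---is handled cleanly. The paper's proof is shorter and leans on the concrete fact that $\Omega_1$-sites block downward steps; yours is more structural, relying only on the abstract blocking property of $Q$-terraces in $\Omega_1$ together with the push-up machinery already developed in Section~\ref{sec:localterr}. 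Either is fine here.
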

\begin{proof} Assume that neither (i) nor (iii) is the case, so there is a $z\in H_{\qt}^{\sss Q}\subset \qt$. Then for each $i$, either $z+e_i\notin Q$ or $z+e_i\in\qt$, and in the latter case, Lemma \ref{lem:localCoStructure} implies that $z+e_i$ has environment $\mc{E}_+$. In particular, we cannot reach $z$ in one step from any site $z+e_i\in Q$. Since $z\in \mc{C}^{\sss Q}_x$ we conclude that either $z=x$ or some site $y_1=z-e_{j_1}\in\mc{C}^{\sss Q}_x$. 

In the latter case for each $i$, either $y_1+e_i\notin Q$, or $y_1+e_i\in Q\cap \qt_+$ (as $y_1\in \mc{C}^{\sss Q}_x\subset\qt_+$).  If $y_1+e_i\in Q\cap \qt_+$ then either: $i=j_1$ and $y_1+e_i=z$  or; $i\ne j_1$ and since $Q$ is a rectangle, $z+e_i\in Q$, so 
 $z+e_i\in \qt$.  Either way we conclude that $y_1+e_i\in \qt$ (in the case $i\ne j_1$ we have $y_1+e_i=(z+e_i)-e_j\in \qt_+$ and $z+e_i\in \qt$ so by Lemma \eqref{lem:QL} $y_1+e_i\in \qt$). Therefore $y_1\in H_{\qt}^{\sss Q}$.

We may repeat this argument (either $y_1=x$ or some site $y_2=y_1-e_{j_2}\in \mc{C}^{\sss Q}_x$ and so on), but eventually the iteration must stop because $Q$ is finite.  It stops precisely when $y_n=x$. In other words, there is a sequence $z=:y_0,\dots,y_n=x\in\qt$ with each $y_{k+1}\in (y_k)_-$.
Therefore $z\in x_+$ and (ii) holds.
\end{proof}

An event $A$ that is measurable with respect to $\sigma\big((\mc{G}_x)_{x\in \Z^d}\big)$ is said to be {\em increasing} if the following holds:
\begin{equation}
\text{for any }\Gamma_1\subset \Gamma_2\subset \Z^d \text{ if }\{\Omega_1=\Gamma_1\}\subset A \text{ then }\{\Omega_1=\Gamma_2\}\subset A.
\end{equation}
The following is a trivial consequence of the fact that $\mc{E}_+\subset \mc{E}$.
\begin{corollary}
\label{lem:increasingevent}
Let $x,y\in Q=[a,b]$. The event $\{\text{$\mc{C}^{\sss Q}_x$ doesn't intersect $y_-$}\}$ is increasing.
\end{corollary}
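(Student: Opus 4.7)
The plan is to show that increasing $\Omega_1$ only removes directed edges from the random graph restricted to $Q$, so that $\mc{C}^{\sss Q}_x$ is monotone nonincreasing in $\Omega_1$. Once that monotonicity is established, the statement follows immediately, since an intersection with $y_-$ can only shrink.

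First I would fix arbitrary $\Gamma_1 \subset \Gamma_2 \subset \Z^d$ and suppose $\{\Omega_1 = \Gamma_1\} \subset A$. Under $\Omega_1 = \Gamma_k$ (for $k=1,2$), site $z$ has environment $\mc{G}_z = E = \mc{E}_+$ when $z \in \Gamma_k$ and $\mc{G}_z = F = \mc{E}$ when $z \notin \Gamma_k$. Because $\Gamma_1 \subset \Gamma_2$, the only sites where the environment differs between the two configurations are in $\Gamma_2 \setminus \Gamma_1$, and at such sites $\mc{G}_z$ shifts from $\mc{E}$ to $\mc{E}_+$. Since $\mc{E}_+ \subset \mc{E}$, this means that at every site the set of outgoing arrows under $\Gamma_2$ is contained in the set of outgoing arrows under $\Gamma_1$.

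Consequently, any directed path in $Q$ that exists under $\Omega_1 = \Gamma_2$ also exists under $\Omega_1 = \Gamma_1$, which yields the containment $\mc{C}^{\sss Q}_x(\Gamma_2) \subset \mc{C}^{\sss Q}_x(\Gamma_1)$. Therefore, if $\mc{C}^{\sss Q}_x(\Gamma_1) \cap y_- = \varnothing$, the same holds for $\mc{C}^{\sss Q}_x(\Gamma_2) \cap y_-$. This shows $\{\Omega_1 = \Gamma_2\} \subset A$ whenever $\{\Omega_1 = \Gamma_1\} \subset A$, i.e., $A$ is increasing. There is really no obstacle here: the only content is the monotonicity $\mc{E}_+ \subset \mc{E}$, exactly as the paper foreshadows.
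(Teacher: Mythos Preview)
Your proof is correct and is exactly the argument the paper has in mind: it simply declares the result ``a trivial consequence of the fact that $\mc{E}_+\subset\mc{E}$'' without spelling out the details, and your write-up just makes that one-line observation explicit.
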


Let $x,y\in R\subset \Z^d$.  Let $\bso\in \{\mc{E}_+,\mc{E}\}^{\Z^d}$.  Write $\mc{C}_x({\bs\omega})$ for the set of vertices in $\Z^d$ reachable from $x$ by following arrows in ${\bs \omega}$ and similarly $\mc{C}_x^{\sss R}({\bs\omega})$ is the set of sites reachable using only arrows in $\bs\omega$ that begin and end in $R$.  
A site $u\in R$ is said to be {\it pivotal} for $x \to y$ in $\bs{\omega}_R:=(\bs{\omega}_z)_{z\in R}$ if there is no connection from $x$ to $y_-$ within $R$ if $\bs{\omega}_u$ is replaced by $\mc{E}_+$, but such a connection is created if we replace $\bs{\omega}_u$ by $\mc{E}$.  We write $\Omega_1(\bs{\omega}_R)=\{x\in R:\bs{\omega}_x=\mc{E}_+\}$, and $\Omega_1(\bs{\omega}):=\Omega_1(\bs{\omega}_{\Z^d})$.

\begin{lemma}
\label{lem:pivotalsitesandcorners} Let ${\bs \omega}\in \{\mc{E}_+,\mc{E}\}^{\Z^d}$ and let $R\ni x,y$ be a box, and suppose that there exists an $R$-terrace $\rt\subset \Omega_1(\bs{\omega}_R)$ such that $x\in \rt_+$ and $y \notin \rt_+$.
Let $u\in R\setminus x_+$.  If $u$ is pivotal for $x\to y$ in $\bs{\omega}_R$, then $u\in\rt$ and there is an $i$ with $u+e_i\in R\setminus \rt$, and a $j$ with $u-e_j\in R\setminus \rt_+$.
\end{lemma}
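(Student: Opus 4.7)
The plan is to unpack the pivotality of $u$ into a nearest-neighbour path in the modified environment (where $\bs{\omega}_u$ is replaced by $\mc{E}$) from $x$ to some $y'\in y_-\cap R$, and to inspect this path at two critical moments: the first time it leaves $\rt_+$, and the last transition into $u_-$ along its initial segment up to the first visit to $u$. Both analyses rest on the deterministic terrace lemmas established earlier in this section.

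For the first exit, since $\rt_+$ is upward closed and $y\notin\rt_+$, also $y'\notin\rt_+$, so the path must exit $\rt_+$. Applying Lemma~\ref{lem:terrace}(vii) to the terrace $\Lambda$ underlying $\rt$ (using Lemma~\ref{lem:plus_sets} to identify $\rt_+\cap R$ with $\Lambda_+\cap R$), the first exit step has the form $x_{k^*}\to x_{k^*}-e_j$ with $x_{k^*}\in\rt$. Because $\rt\subset\Omega_1(\bs{\omega}_R)$, every site of $\rt$ other than $u$ still has $\mc{G}=\mc{E}_+$ in the modified environment and cannot admit a $-e_j$ step, so $x_{k^*}=u$. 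This gives $u\in\rt$, while $u-e_j=x_{k^*+1}$ lies on the path in $R$ but outside $\rt_+$, producing the required index $j$.

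For the index $i$, restrict to the initial segment $x_0,\dots,x_k=u$ up to the first visit of the path to $u$. No step originates at $u$ along this segment, so it is legal in $\bs{\omega}_R$ as well, and Lemma~\ref{lem:Qterrace_crossing} keeps it inside $\rt_+$. The hypothesis $u\in R\setminus x_+$ gives $x\notin u_-$ while $x_k=u\in u_-$, so we may take $l$ to be the last index with $x_l\notin u_-$. Since only one coordinate changes per step, $x_{l+1}\in u_-$ forces $x_{l+1}-x_l=-e_i$ for a unique $i$, with $x_l^{\sss[i]}=u^{\sss[i]}+1$ and $x_l^{\sss[j']}\le u^{\sss[j']}$ for $j'\ne i$; in particular $x_l\le u+e_i$ coordinatewise. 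Since $l<k$ gives $x_l\ne u$, legality of the $-e_i$ step forces $\mc{G}_{x_l}=\mc{E}$, hence $x_l\notin\rt$, so $x_l\in\rt_+\setminus\rt$.

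Finally, $x_l\in R$ together with $x_l^{\sss[i]}=u^{\sss[i]}+1$ places $u+e_i\in R$, while $x_l\le u+e_i$ gives $u+e_i\in(x_l)_+$. Lemma~\ref{lem:QL} says $R\cap(\rt_+\setminus\rt)$ is $R$-solid above, so $u+e_i\in R\cap(\rt_+\setminus\rt)\subset R\setminus\rt$, which yields $i$. The main subtlety is that $l$ has to be chosen as the \emph{last} transition into $u_-$ rather than an arbitrary $-e$ step in the segment: only this choice pins down the equality $x_l^{\sss[i]}=u^{\sss[i]}+1$, and hence places $u+e_i$ simultaneously in $R$ and in $(x_l)_+$, so that the solid-above conclusion of Lemma~\ref{lem:QL} can be applied to produce $u+e_i\in R\setminus\rt$.
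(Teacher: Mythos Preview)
Your proof is correct and follows essentially the same strategy as the paper: use the connecting path to locate the $j$ with $u-e_j\in R\setminus\rt_+$, then examine the initial segment from $x$ to the first visit of $u$, find a point in $\rt_+\setminus\rt$ with the right coordinate relations to $u$, and apply Lemma~\ref{lem:QL} to conclude $u+e_i\in R\setminus\rt$.

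The only genuine difference is in the choice of the key point on the segment from $x$ to $u$. The paper takes $z$ to be the \emph{last point at which a negative step is taken}, so that $u\in(z-e_{i_1})_+$; it then needs the extra observation that $u\notin z_+$ (via Lemma~\ref{lem:QL} and $u\in\rt$) to pin down $u^{[i_1]}=z^{[i_1]}-1$. You instead take $x_l$ to be the \emph{last point not in $u_-$}, which forces $x_l^{[i]}=u^{[i]}+1$ and $x_l^{[j']}\le u^{[j']}$ for $j'\ne i$ directly from the definition, without the detour through $u\notin z_+$. These two points can differ (e.g.\ in $d=3$ one can construct paths where the last negative step occurs strictly after the last entry into $u_-$), and so the resulting index $i$ may differ, but both choices yield a valid $i$. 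Your version is marginally cleaner on this point; the paper's version is marginally more direct in extracting the index $j$ (it reads it off from pivotality as $u-e_{i_0}\to y_-$ rather than by tracking the first exit from $\rt_+$ along the full path).
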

\begin{proof}
Let $u$ be pivotal for $x\to y$ in $\bs{\omega}_R$.  Then $u \in \mc{C}^{\sss R}_x(\bso)$ 
and $y \in \mc{C}^{\sss R}_{u-e_{i_0}}(\bso)$ for some $i_0\in [d]$.  The first observation plus Lemma \ref{lem:Qterrace_crossing} implies that $u \in \rt_+$.   The second observation implies that $u-e_{i_0} \in R\setminus \rt_+$ (since if $u \in \Delta_+\cap R$ and $y\notin \Delta_+\cap R$ then by Lemma  \ref{lem:Qterrace_crossing} $y \notin \mc{C}^{\sss R}_{u-e_{i_0}}(\bso)$). So the final conclusion of the Lemma holds with $j=i_0$. 

 Let $\Lambda$ be a terrace such that $\Lambda\cap R=\rt$.  By Lemma \ref{lem:plus_sets},  $u \in \Lambda_+$ and $u-e_{i_0} \in (\Lambda_+)^c$.  By Lemma \ref{lem:terrace}(vii) $u \in \Lambda$.  Hence $u \in \rt$.

Since $u \in \mc{C}^{\sss R}_x(\bso)$ there is a self-avoiding nearest-neighbour path from $x$ to $u$ that is consistent with the environment (in particular it is a subset of $ \mc{C}^{\sss R}_x(\bso)$) and stays in $R$.  By Lemma \ref{lem:Qterrace_crossing} in fact this path stays in $R\cap \rt_+$. Let $z\in \rt_+$ be the last point on this path from which the step taken is in $\mc{E}_-$  (there has to be at least one such point, otherwise we'd have $u\in x_+$).  In particular $z\notin \Omega_1(\bso)$ so $z\notin \rt$.  Therefore $z\in R\cap \rt_+\setminus\rt$, and there is an $i_1\in [d]$ such that $z-e_{i_1}\in R$ (being the next step in the path) but $u\in (z-e_{i_1})_+$.  

Since $u\in\rt$, we can't have $u\in z_+$, by Lemma \ref{lem:QL}. Therefore $u^{\sss[i_1]}=z^{\sss[i_1]}-1$ and so $u+e_{i_1}\in R$. Since $z\in \rt_+\setminus \rt$ and $u+e_{i_1}\in z_+$, Lemma \ref{lem:QL} implies that $u+e_{i_1}\in R\setminus\rt$. 
\end{proof} 
%Note that the restriction that $u\notin x_+$ can be binding. For example, if $\Lambda=\partial_- x_+\cap Q_N$ and $\omega=+$ on $x_+$ but $=-$ elsewhere, then every $u\in\Lambda$ is pivotal for $x$. But $u+e_i\notin\Lambda$ is only true if $u^{\sss[j]}>x^{\sss[j]}$ for $j\neq i$.

\section{Enhancement}
\label{sec:enhancement}

In this section, we will prove our main result.  Subsection \ref{disturbedorthant} will consider a disturbed version of the half-orthant model, and Subsection \ref{slabs} will then use this model to prove Theorem \ref{thm:p_c}.

Let $V_{d}\subset \Z^{d}$ denote the set of points $x=(x^{\sss[1]},\dots, x^{\sss[d]})$ such that 
\[h(x):=\sum_{i=1}^{\lceil d/2\rceil }i x^{\sss[2i-1]} - \sum_{i=1}^{\lfloor d/2\rfloor }i x^{\sss[2i]}\equiv 0 \mod \rd,\]
where $\rd$ is the smallest prime greater than $2{\lceil d/2\rceil }$.  When $d=2$ we have that $\rd=3$ and $h(x)=x^{\sss[1]}-x^{\sss[2]}$ and a finite portion of $V_2$ can be seen in Figure \ref{fig:V2}.
\begin{figure}
\begin{center}
\includegraphics[scale=.35]{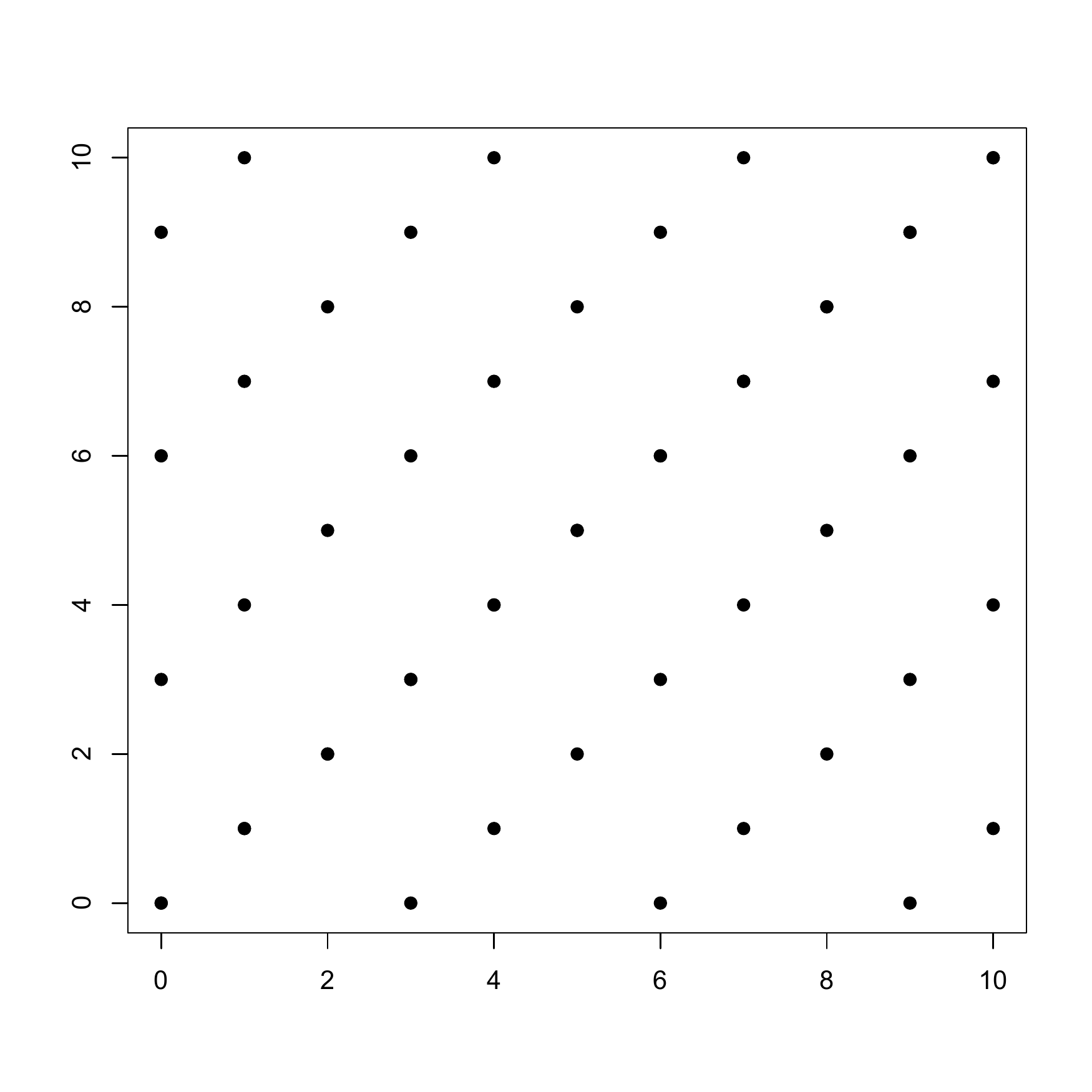}
\end{center}
\vspace{-1cm}
\caption{A finite portion of the set of points $V_2$.}
\label{fig:V2}
\end{figure}
Using the facts that $h(o)=0 \mod \rd$, and   $h(x+\rd e_{2i})=h(x)-\rd i$, and $h(x+\rd e_{2i-1})=h(x)+\rd i$,
 the following is easy to verify.
\begin{lemma}
\label{lem:Vd}
Any adjacent collection of at least $\rd$ vertices on a line parallel to one of the axes will contain at least one point of $V_d$.
\end{lemma}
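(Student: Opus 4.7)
The plan is to use elementary modular arithmetic, exploiting that the coefficients of $h$ are all coprime to $\rd$. Consider $\rd$ consecutive lattice points on a line parallel to axis $e_k$, namely $x, x+e_k, \dots, x+(\rd-1)e_k$. By the formulas listed just before the lemma (together with linearity of $h$), if $k=2i-1$ then $h(x+me_k) = h(x) + mi$, and if $k=2i$ then $h(x+me_k)=h(x)-mi$, for some $i$ with $1 \le i \le \lceil d/2\rceil$.

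The key observation is that $i < \rd$ and $\rd$ is prime, so $\gcd(i,\rd)=1$. More precisely, since $\rd$ is the smallest prime strictly greater than $2\lceil d/2\rceil \ge 2i$, we have $i < \rd/2 < \rd$, so $\rd\nmid i$, and primality of $\rd$ forces $\gcd(i,\rd)=1$. Consequently multiplication by $i$ is a bijection on $\Z/\rd\Z$, and as $m$ runs over $\{0,1,\dots,\rd-1\}$ the residues $\{\pm mi \bmod \rd\}$ hit every element of $\Z/\rd\Z$ exactly once.

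Hence the values $h(x), h(x+e_k), \dots, h(x+(\rd-1)e_k)$ realise all $\rd$ residue classes mod $\rd$; in particular exactly one of them is $\equiv 0 \pmod \rd$, giving a point of $V_d$. Any collection of at least $\rd$ consecutive collinear vertices contains such a block of $\rd$, so the conclusion follows.

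I do not expect any obstacle here; this is a one-line pigeonhole argument once one notices that $\rd$ was defined precisely so that every coefficient $i \in \{1,\dots,\lceil d/2\rceil\}$ appearing in $h$ is coprime to $\rd$. The choice of $\rd$ as a \emph{prime} (rather than merely something coprime to $\lceil d/2\rceil!$) is exactly what makes the step ``$\gcd(i,\rd)=1$ for all relevant $i$'' immediate.
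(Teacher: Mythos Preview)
Your argument is correct and is precisely the verification the paper leaves to the reader: the paper only records the increments $h(x+\rd e_{2i-1})=h(x)+\rd i$ and $h(x+\rd e_{2i})=h(x)-\rd i$ and declares the lemma ``easy to verify'', and your coprimality/pigeonhole step is exactly the intended completion.
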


Recall that the (orthant and) half-orthant model on $\Z^d$ was defined using uniform random variables $(U_x)_{x \in \Z^d}$. Given $p,q\in [0,1]$, 
\begin{itemize}
\item[\normalfont(i)] if $x\in \Z^d\setminus V_d$ then set $\mc{G}_{x}=\mc{E}_+$ if $U_x\le p$, and $\mc{G}_{x}=\mc{E}$ otherwise;
\item[\normalfont(ii)] if $x\in V_d$ then set $\mc{G}_{x}=\mc{E}_+$ if $U_x\le q$, and $\mc{G}_{x}=\mc{E}$ otherwise. 
\end{itemize}
Let $\mc{C}_x(p,q)\subset \Z^d$ be the set of sites that can be reached from $x$ by following the arrows of this model. Let $\theta(p,q)=\P\big(\mc{C}_o(p,q)=\Z^d\big)$.  As in the introduction it is trivial that $\mc{C}_x(p,q)$ is monotone decreasing in both $p$ and $q$. By definition of $\mc{C}_x(p,q)$ and Theorem \ref{thm:phasetransition}, we have that 
\begin{equation}
\P(\mc{C}_o(p,p)=\Z^d)=\begin{cases}
1, & \text{ if } p<p_c(d),\\
0, &\text{ if }p>p_c(d).
\end{cases}
\label{cpp}
\end{equation}

For an increasing continuous function $f:[0,1]\ra [0,1]$ such that $f(0)=0$, $f(1)=1$, and $f(p)<p$ for $p\in(0,1)$ we define what we call an {\it $f$-disturbed orthant model}, as follows.  Define 
$$
p_c(f,d):=\sup\{p:\theta(p,f(p))>0\}.
$$
Since $f$ is monotone, this shows that $\mc{C}_x(p,f(p))$ is monotone decreasing in $p$ and therefore $\theta(p,f(p))>0$ for all $p<p_c(f,d)$.  We will prove the following in Section \ref{sec:modification}.

\begin{proposition}
For $f$ continuous and increasing with $f(0)=0$, $f(1)=1$, and $f(p)<p$ for $p\in (0,1)$  \[p_c(f,d)>p_c(d).\]
\label{prp:strictdisturbedinequality}
\end{proposition}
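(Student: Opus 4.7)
The plan is to apply the Aizenman--Grimmett enhancement methodology. First, reduce the problem: since $p_c(d)\in(0,1)$ (Theorem \ref{thm:phasetransition}) and $f(p_c(d))<p_c(d)$, continuity of $p\mapsto p-f(p)$ yields constants $c>0$ and $\eta>0$ with $f(p)\le p-c$ on $[p_c(d),p_c(d)+\eta]$. By monotonicity of $\mc{C}_o(p,q)$ in $q$, it suffices to prove $\theta(p_c(d)+\delta,p_c(d)+\delta-c)>0$ for some $\delta\in(0,\eta]$. To analyse this via Russo's formula I would pass to a sequence of finite-volume approximations $\theta_n(p,q)$ of $\theta(p,q)$, based on an event measurable with respect to $(\mc{G}_x)_{x\in R_n}$ with $R_n=[-n,n]^d$ and decreasing in $\Omega_1\cap R_n$ (for instance the event that the local forward cluster $\mc{C}_o^{R_n}$ fills $R_n$, which by Lemma \ref{lem:localCoStructure} is the complement of the existence of an $R_n$-terrace in $\Omega_1\cap R_n$ with $o\in\qt_+$). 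Russo then yields
\[
-\partial_p\theta_n=\sum_{u\in R_n\setminus V_d}\P_{p,q}(u\text{ pivotal}),\qquad -\partial_q\theta_n=\sum_{u\in R_n\cap V_d}\P_{p,q}(u\text{ pivotal}).
\]

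The main step --- and the principal obstacle --- is a uniform comparison $-\partial_q\theta_n\ge\kappa\cdot(-\partial_p\theta_n)$ with $\kappa=\kappa(p,q)>0$ bounded below on compacta in $(0,1)^2$. My plan is to match every pivotal site $u\in R_n\setminus V_d$ to a pivotal site $u'\in V_d$ within $\ell^\infty$-distance $\rd$, by a local resampling of the environment. Indeed, by Lemma \ref{lem:pivotalsitesandcorners}, $u$ is a corner of the relevant $R_n$-terrace $\qt$; by Lemma \ref{lem:Vd}, any axis-parallel segment through $u$ of length $\rd$ contains a point $u'\in V_d$; and using the push-up operations of Lemma \ref{lem:Qterraceconsequences} together with the path and deformation constructions of Lemmas \ref{specialpath} and \ref{lem:minimalterrace}, one can deform $\qt$ locally so that $u'$ becomes a corner, at the cost of resampling only $O(\rd^d)$ environment values. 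The resulting map on pivotal configurations has bounded multiplicity and Radon--Nikodym cost bounded below by a positive constant depending only on $p,q,d$, which yields $\kappa$. The difficulty, flagged in the introduction to Section \ref{sec2intro}, is that environment/boundary configurations can make $\qt$ rigid and so resistant to local deformation; the plan to handle this is to argue that every such rigid $\qt$ must already contain a $V_d$-point in an appropriate role, so that a $V_d$-pivotal is available without any modification.

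Finally, integrate. Along the curve $\gamma(t)=(p_c(d)-\delta+t,\,p_c(d)-\delta-t/\kappa)$ for $t\in[0,2\delta]$, the comparison gives $\tfrac{d}{dt}\theta_n(\gamma(t))=\partial_p\theta_n-\partial_q\theta_n/\kappa\ge 0$, so $\theta_n\circ\gamma$ is non-decreasing along $\gamma$, and hence
\[
\theta_n\big(p_c(d)+\delta,\,p_c(d)-\delta(1+2/\kappa)\big)\ge\theta_n(p_c(d)-\delta,p_c(d)-\delta)\xrightarrow{n\to\infty}1
\]
using Theorem \ref{thm:phasetransition}(I) for the limit (any sensible finite-volume $\theta_n$ will tend to $1$ in the subcritical regime). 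Choosing $\delta>0$ so small that $\delta\le\eta$ and $\delta(2+2/\kappa)\ge c$, monotonicity of $\theta$ in $q$ then gives $\theta(p_c(d)+\delta,p_c(d)+\delta-c)>0$, whence $p_c(f,d)\ge p_c(d)+\delta>p_c(d)$.
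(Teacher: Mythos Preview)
Your overall strategy is the paper's: Aizenman--Grimmett enhancement via Russo's formula, with the key input being a uniform comparison between the expected numbers of pivotal sites in $V_d$ and in its complement. However, several steps in your outline are either not quite right or elide the real difficulty.

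\textbf{Finite-volume approximation.} Your choice $\theta_n=\P(\mc{C}_o^{R_n}=R_n)$ is problematic. This event is equivalent to $-n\1\in\mc{C}_o^{R_n}$, i.e.\ $o$ connects to the \emph{extreme corner} of the box using only arrows inside $R_n$. Even when $p<p_c(d)$ (so $\mc{C}_o=\Z^d$ a.s.), it is not clear that the connecting path can be confined to $R_n$; your parenthetical ``any sensible $\theta_n$ will tend to $1$'' is exactly the point that needs proof and may fail for this particular choice. The paper avoids this by using two scales $n<M<N$: the target is $S=(-M\1)_-\cap Q_N$ and one first sends $N\to\infty$ (so paths may use all of $\Z^d$), then $M\to\infty$. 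This double limit is what yields $\beta_M^N\to 1-\theta$.

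\textbf{Pivotal sites are not corners.} Lemma \ref{lem:pivotalsitesandcorners} does \emph{not} say that a pivotal $u$ is a corner of $\rt$; it only gives $u\in\rt$, one $i$ with $u+e_i\in R\setminus\rt$, and one $j$ with $u-e_j\in R\setminus\rt_+$. So your plan ``$u$ is a corner, push it to a nearby $u'\in V_d$'' does not get off the ground as stated.

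\textbf{The local modification is the hard part.} Your sketch of how to produce a pivotal $\bar u\in V_d$ within $\ell^\infty$-distance $\rd$ of $u$ is where the real work lies, and the paper devotes Proposition \ref{prop:modification} (several pages) to it. The argument is not simply a local deformation of $\rt$ near a corner; one constructs a new $R$-terrace $\OLa$ by repeatedly pushing up corners in a box $\bar Q$ around $u$ \emph{except} at a carefully chosen anchor set $A$ near $u$, while also excising any newly created points of $H^R_{\OLa}\setminus o_+$, and then one shows that $\OLa$ must contain a long axis-parallel segment (either in $u_+$ or running back to $\partial_R\bar Q$) on which a $V_d$-point $\bar u$ can be found. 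One then modifies $\bs\omega$ on all of $(u+Q_n)\cap R$ (not just $O(\rd^d)$ sites) to make $\bar u$ pivotal. Your outline does not capture this construction, and the rigidity issue you mention is precisely what makes the straightforward ``push a corner'' idea fail.

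\textbf{Minor.} In your final step the inequality should read $\delta(2+2/\kappa)\le c$, not $\ge$; you want $\delta$ small so that $p_c-\delta(1+2/\kappa)\ge (p_c+\delta)-c$ and monotonicity in $q$ applies.
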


Fix $d\ge 2$.
For $i\in\{1,2\}$ set $S^{\{i\}}=\Z^{d}\times \{i\}$ and $S^{\{1,2\}}=\Z^{d}\times \{1,2\}$, so $S^{\{1\}}, S^\{2\}\subset S^{\{1,2\}}\subset \Z^{d+1}$. Let $\mc{C}_{x,+1}(p)$ denote the forward cluster of $x$ in the half-orthant model for $\Z^{d+1}$. 

For $x\in S^{\{i\}}$ define $\mc{C}_{x}^{\{i\}}(p)\subset S^{^{\{i\}}}$ 
as the set of points of $S^{\{i\}}$ that can be reached from $x$ using only those arrows from the half-orthant model in $\Z^{d+1}$ that begin and end in $S_{i}$. Likewise, for $x\in S^{\{1,2\}}$ define $\mc{C}_x^{\{1,2\}}(p)$ as the set of points of $S^{\{1,2\}}$ that can be reached from $x$ using only those arrows from the orthant model in $\Z^{d+1}$ that begin and end in $S^{\{1,2\}}$. Therefore for $x\in S^{\{i\}}$ and $i\in\{1,2\}$ we have $\mc{C}_x^{\{i\}}(p)\subset\mc{C}_x^{\{1,2\}}(p)\subset\mc{C}_{x,+1}(p)$. 

As before, for $A\subset\mathbb{Z}^{d+1}$ let $A_+=\cup_{z\in A}z_+\subset\mathbb{Z}^{d+1}$. Now let $x\in S^{\{i\}}$, where $i\in \{1,2\}$. Clearly $p_c(d+1)=\sup\{p:\P\big( \mc{C}_{x,+1}(p)=\Z^{d+1}\big)>0\}$ and $p_c(d)=\sup\{p:\P\big( \mc{C}_{x}^{\{i\}}(p)=S^{\{i\}}\big)>0\}$. We may also define
\begin{equation}
p_c^{\{1,2\}}(d):=\sup\big\{p:\P\big( \mc{C}_{x}^{\{1,2\}}(p)=S^{\{1,2\}}\big)>0\big\}.\label{pc2d}
\end{equation}
From the above and the fact that a.s.~for all $p<1$ a positive proportion of vertices in any half line in $\Z^{d+1}$ are of type $\mc{E}$ (i.e.~are in $\Omega_0$) one can easily deduce that
\begin{equation}
p_c(d+1)\ge p_c^{\{1,2\}}(d)\ge p_c(d).
\label{slabinequality}
\end{equation}
We will show the following in the final section.
\begin{proposition} For $f(p)=p(1-(1-p)^{d+1})$ we have 
$p_c^{\{1,2\}}(d)\ge  p_c(f,d)$.
\label{slabstrictinequality}
\end{proposition}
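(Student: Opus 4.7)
The plan is to construct a coupling between the half-orthant slab environment $\bso$ on $S^{\{1,2\}}$ at parameter $p$ and an $f$-disturbed orthant environment $\tilde{\bso}$ on $\Z^d$ at parameter $p$, under which the event $\{\mc{C}_o(\tilde{\bso})=\Z^d\}$ entails $\{\mc{C}_{(o,1)}^{\{1,2\}}(\bso)=S^{\{1,2\}}\}$. Granted this, $\P(\mc{C}_{(o,1)}^{\{1,2\}}(\bso)=S^{\{1,2\}})\ge \theta(p,f(p))$, so every $p<p_c(f,d)$ is a value at which the slab cluster equals $S^{\{1,2\}}$ with positive probability; thus $p\le p_c^{\{1,2\}}(d)$, and letting $p\uparrow p_c(f,d)$ yields the proposition.

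Define the components $\tilde{\bso}_x$ from $\bso$ by the following rule. For $x\notin V_d$, set $\tilde{\bso}_x=\bso_{(x,1)}$. For $x\in V_d$, declare $\tilde{\bso}_x=\mc{E}$ whenever either $\bso_{(x,1)}=\mc{E}$, or $\bso_{(x,1)}=\mc{E}_+$ together with $\bso_{(x,2)}=\mc{E}$ and $\bso_{(x-e_i,2)}=\mc{E}$ for every $i\in[d]$; in all remaining cases set $\tilde{\bso}_x=\mc{E}_+$. A direct calculation gives $\P(\tilde{\bso}_x=\mc{E}_+)=p$ for $x\notin V_d$ and $\P(\tilde{\bso}_x=\mc{E}_+)=p\bigl(1-(1-p)^{d+1}\bigr)=f(p)$ for $x\in V_d$, consistent with the $f$-disturbed law. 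For independence across $x$, it suffices to show that the finite sets $S(x)$ of slab sites on which $\tilde{\bso}_x$ depends are pairwise disjoint. The only nontrivial case is distinct $x,x'\in V_d$: the layer-$1$ parts of $S(x),S(x')$ trivially differ, so an overlap would require $x-x'=e_j-e_k$ for some $j,k\in\{0,1,\dots,d\}$ (with $e_0:=0$) and $(j,k)\ne(0,0)$. Because the coefficients of $h$ are pairwise distinct and bounded in absolute value by $\lceil d/2\rceil<\rd$, any such vector satisfies $h(e_j-e_k)\neq 0$ and $|h(e_j-e_k)|<\rd$, giving $h(x-x')\not\equiv 0\pmod{\rd}$, which contradicts $x,x'\in V_d$.

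To verify the set inclusion, let $y\in\Z^d$ and fix a path $o=y_0,y_1,\dots,y_n=y$ in $\tilde{\bso}$; this will be lifted to a path in $\bso$ ending at $(y,1)$. A step $y_{k+1}=y_k+e_i$ lifts as $(y_k,1)\to(y_{k+1},1)$, which is available because $+e_i$ belongs to both $\mc{E}_+$ and $\mc{E}$. A step $y_{k+1}=y_k-e_i$ requires $\tilde{\bso}_{y_k}=\mc{E}$; if $\bso_{(y_k,1)}=\mc{E}$ we step in layer $1$, while otherwise $y_k\in V_d$ with $\bso_{(y_k,1)}=\mc{E}_+$, so by definition both $\bso_{(y_k,2)}$ and $\bso_{(y_k-e_i,2)}$ equal $\mc{E}$, permitting the three-step detour $(y_k,1)\to(y_k,2)\to(y_k-e_i,2)\to(y_{k+1},1)$. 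Iterating, we reach $(y,1)$, and then applying the arrow $+e_{d+1}$ (present at every slab site) also gives $(y,2)$, so $\mc{C}_{(o,1)}^{\{1,2\}}(\bso)=S^{\{1,2\}}$. The main subtlety in this plan is the independence check, which is where the spacing properties of $V_d$ encoded by $h$ and $\rd$ enter essentially.
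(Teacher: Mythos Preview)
Your proof is correct. The coupled environment $\tilde{\bso}$ you define coincides with the paper's $\bs{\zeta}$ (your $\tilde{\bso}_x=\mc{E}_+$ condition for $x\in V_d$ is exactly $x\in\Omega_1(\bs{\eta})$ in the paper's notation, with $W=e_{d+1}+(\{o\}\cup\mc{E}_-(d))$), and the independence check is the same use of the spacing of $V_d$ (the paper's Lemma~\ref{lem:Edspreadout}).

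Where you diverge is in the comparison step. The paper argues on the \emph{blocking} side: for $p>p_c^{\{1,2\}}(d)$ it invokes Lemma~\ref{lem:slabboundary}(vi) to exhibit a $d$-terrace $\Lambda^1\subset\Omega_1(\bs{\eta})\subset\Omega_1(\bs{\zeta})$, which forces $\mc{C}_o(p,f(p))\ne\Z^d$. You instead argue on the \emph{connecting} side, lifting each $\tilde{\bso}$-path step by step to a slab path via the three-step detour $(y_k,1)\to(y_k,2)\to(y_k-e_i,2)\to(y_{k+1},1)$, thereby obtaining the pointwise inclusion $\{\mc{C}_o(\tilde{\bso})=\Z^d\}\subset\{\mc{C}_{(o,1)}^{\{1,2\}}(\bso)=S^{\{1,2\}}\}$ directly. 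Your route is more elementary in that it bypasses Lemma~\ref{lem:slabboundary} entirely; the paper's route is natural given the terrace machinery already in place and makes the connection to the global structure of $\mc{C}_x^{\{1,2\}}$ explicit.
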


\begin{proof}[Proof of Theorem \ref{thm:p_c}]  
Since $f(p)=p(1-(1-p)^{d+1})$ is the product of increasing functions, it is increasing, and the claim follows immediately from \eqref{slabinequality} together with Propositions \ref{prp:strictdisturbedinequality} and \ref{slabstrictinequality}. 
\end{proof}

\subsection{Disturbed orthant model}
\label{disturbedorthant}

\begin{lemma}
\label{lem:01}
For any $p,q\in (0,1)$, the following hold:
\begin{itemize}
\item[\emph{(1)}] for any $x\in \Z^d$, 
the events $\{\mc{C}_x(p,q)=\Z^d\}$ and $\{\mc{C}_x(p,q)\text{ is not bounded below}\}$ are almost surely equal, and 
\item[\emph{(2)}] for any $x\in S^{\{1,2\}}$ the events $\{\mc{C}^{\{1,2\}}_x(p)=S^{\{1,2\}}\}$ and $\{\mc{C}^{\{1,2\}}_x(p)\text{ is not bounded below}\}$ (as a subset of $\Z^{d+1}$) are almost surely equal, and 
\item[\emph{(3)}] each of the events in \emph{(1)} and \emph{(2)} has probability 0 or 1.
\end{itemize}
\end{lemma}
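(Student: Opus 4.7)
The implications $(\Rightarrow)$ in (1) and (2) are immediate, since $\Z^d$ and $S^{\{1,2\}}$ are clearly not bounded below.  The converses and the 0-1 law rest on two structural observations.  First, $\mc{C}_o(p,q)$ and $\mc{C}_x^{\{1,2\}}(p)$ are always solid above, because $\mc{E}_+\subset \mc{G}_z$ at every site $z$.  Second, for $p,q\in(0,1)$ they are a.s.~saturated: the Wn/Se-path argument behind Lemma \ref{NWpaths} adapts directly, since along any self-avoiding path the density of sites with $\mc{G}=\mc{E}$ is a.s.~bounded below by $\min(1-p,1-q)>0$, so Wn/Se-paths a.s.~reach every line parallel to an axis.

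For the converse in (1), suppose $y+k_ne_i\in \mc{C}_o$ with $k_n\to-\infty$.  Solid-above immediately gives $\mc{C}_o\supset\{z:z^{\sss[l]}\ge y^{\sss[l]}\text{ for all }l\ne i\}$.  Given an arbitrary target $w\in\Z^d$ I would produce $z^*\in \mc{C}_o\cap w_-$; solid-above then gives $w\in z^*_+\subset \mc{C}_o$.  Starting at $u_0=y+k_ne_i$ with $|k_n|$ large, a Se-type path in the $(e_i,e_{l_1})$-plane for some $l_1\ne i$ a.s.~makes infinitely many $-e_{l_1}$-steps; truncating after enough $-e_{l_1}$-steps to bring the $e_{l_1}$-coordinate below $w^{\sss[l_1]}$ uses only a bounded number of compensating $+e_i$-steps, so if $|k_n|$ is sufficiently large the $e_i$-coordinate stays below $w^{\sss[i]}$.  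Iterating from the resulting $u_1\in \mc{C}_o$ into the plane $(e_i,e_{l_2})$, and so on across all $l\ne i$, yields the required $z^*$.  Hence $\mc{C}_o=\Z^d$.  Part (2) follows from the identical argument restricted to $S^{\{1,2\}}$: the $e_{d+1}$-coordinate is automatically in $\{1,2\}$, so non-boundedness can only fail in directions $e_i$ with $i\in[d]$, along which all Se/Wn-paths used stay within the slab.

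For (3), the environment is invariant and ergodic under the finite-index sub-lattice $V_d\subset\Z^d$ (for (1)) or $\Z^d\times\{0\}\subset\Z^{d+1}$ (for (2)); invariance comes from the product structure of the law on cosets and ergodicity from i.i.d.-ness on each coset.  Let $A=\{\mc{C}_o(p,q)=\Z^d\}$, $B=\{\exists\tau\in V_d: \mc{C}_\tau=\Z^d\}$, and $D=\{\exists\tau\in V_d\cap o_+: \mc{C}_\tau=\Z^d\}$.  For $\tau_0\in V_d$, the shifted event $D\circ\theta_{\tau_0}=\{\exists\tau\in V_d\cap(o+\tau_0)_+: \mc{C}_\tau=\Z^d\}$ has $P(D\circ\theta_{\tau_0})=P(D)$; letting the coordinates of $\tau_0$ tend to $-\infty$ in $V_d$, these events increase monotonically to $B$, so $P(B)=P(D)$.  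Since $B$ is $V_d$-invariant, $P(B)\in\{0,1\}$ by ergodicity.  If $P(A)>0$, then $P(B)>0$ (because $o\in V_d$ gives $A\subset B$), so $P(D)=1$; on $D$ some $\tau\ge o$ has $\mc{C}_\tau=\Z^d$, whence $\tau\in \mc{C}_o$ via the always-available $+e_j$-arrows and $\mc{C}_o\supset \mc{C}_\tau=\Z^d$.  Thus $P(A)\in\{0,1\}$; the slab case is handled analogously.  The main technical obstacle I anticipate is the bookkeeping in the iterative Se-path construction of the previous paragraph, ensuring that after composing paths in several distinct planes the intermediate points all lie in $\mc{C}_o$ and $z^*\le w$ in every coordinate; the ergodic step is comparatively clean once solid-above and saturation are available.
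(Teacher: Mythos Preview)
Your argument is correct overall, but for the converse in (1) and (2) you work harder than necessary. The paper bypasses the Se-path bookkeeping entirely by exploiting solid-above more directly: once the half-line $\{y-ke_i : k\ge 0\}\subset\mc{C}_x$, almost surely infinitely many of its sites lie in $\Omega_0$, so from each such site one may step $-e_j$ to $y-e_j-ke_i$ for any $j$; solid-above in direction $e_i$ then immediately gives the entire parallel line $\{y-e_j-ke_i : k\in\Z\}\subset\mc{C}_x$. Iterating over $j$ (in both the $-e_j$ and the always-available $+e_j$ directions) yields $\mc{C}_x=\Z^d$ in a couple of lines, and the slab version is identical with the obvious $e_{d+1}$ restriction. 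This avoids the delicate step in your argument where you assert that truncation ``uses only a bounded number of compensating $+e_i$-steps'': as phrased this is inaccurate, since that count depends on the starting point $y+k_ne_i$. The conclusion you draw is nonetheless correct, because the position at which the Se path first drops a level is the leftmost $\Omega_0$ site on the current line at or to the right of the start, and this tends to $-\infty$ as $k_n\to-\infty$; but making this precise, together with the iteration across planes, is exactly the bookkeeping you flagged as the main obstacle, and the paper's route simply sidesteps it.

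Your treatment of (3) is correct and matches the paper's, which refers to \cite{phase} for the translation-invariance/ergodicity step; you have supplied the details of that argument cleanly, using the fact that $o\in V_d$ and solid-above to pass from the $V_d$-invariant event $B$ back to $A$.
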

\begin{proof}
The proofs of both (1) and (2) mimic that of \cite[Lemma 2.2]{phase} as we now sketch.  Suppose that for some $y,i$, $\{k \in \N:y-ke_i\in \mc{C}_x\}$ is  infinite.  
Since $\mc{C}_x$ is solid above, in fact $y-ke_i\in \mc{C}_x$ for every $k \in \N$.  
But a.s.~infinitely many of the points $y-ke_i$ for $k \in \N$ are in $\Omega_0$ so $y-e_j-ke_i\in \mc{C}_x$ for infinitely many $k\in \N$ and for all $j$ (with the constraint that $y-e_j\in S^{\{1,2\}}$ in the case (2)).  This shows that if $\mc{C}_x$ is not bounded below then every site is in $\mc{C}_x$, and (1) and (2) follow.
 
The statement of (3) now follows as in the proof of Theorem 1.11 of \cite{phase}, using invariance under translation by vectors $e_i$ for $i\neq d+1$ in the case of $\mc{C}^{\{1,2\}}_x(p)$, and invariance under translation by vectors in $V_d$ in the case of $\mc{C}_x(p,q)$.
\end{proof}
Note that we will not actually use part (3) of Lemma \ref{lem:01} 
above, but have included it for completeness.
\begin{lemma}
Let $x,x'\in V_d$ be distinct.  Then $x'-e_i\notin \{x\}\cup \{x-e_j: j \in [d]\}$.
\label{lem:Edspreadout}
\end{lemma}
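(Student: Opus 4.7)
The plan is to argue via the explicit evaluation of $h$. The statement is equivalent to saying that for distinct $x,x'\in V_d$ we cannot have $x'-x=e_i$ for any $i$, nor $x'-x=e_i-e_j$ for any $i,j\in[d]$ (the case $i=j$ in the second form reduces to $x'=x$, which is already ruled out). Since both $x$ and $x'$ lie in $V_d$, we have $h(x')\equiv h(x)\pmod{\rd}$, so the whole proof reduces to checking that the possible values of $h(x')-h(x)$ under such displacements cannot be divisible by $\rd$.

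Using the identities $h(x+e_{2i-1})=h(x)+i$ and $h(x+e_{2i})=h(x)-i$ quoted just before the statement, each single-coordinate move changes $h$ by an integer of absolute value in $\{1,\dots,\lceil d/2\rceil\}$. Thus for $x'=x+e_i$ we would get $h(x')-h(x)=\pm k$ with $1\le k\le\lceil d/2\rceil$, which is a nonzero integer of absolute value less than $\rd$ (recall $\rd>2\lceil d/2\rceil$), hence not divisible by $\rd$. Contradiction. For $x'=x+e_i-e_j$ with $i\ne j$, we get $h(x')-h(x)=\varepsilon_1 a\pm\varepsilon_2 b$ for signs $\varepsilon_1,\varepsilon_2$ determined by the parities of $i,j$ and integers $a,b\in\{1,\dots,\lceil d/2\rceil\}$ determined by $i,j$; the absolute value of this combination is at most $2\lceil d/2\rceil<\rd$, so divisibility by $\rd$ forces it to be exactly zero.

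The remaining step is a short parity case-check showing that this zero can occur only when $i=j$: if $i,j$ are both odd ($i=2a-1$, $j=2b-1$), the difference is $a-b$, which vanishes iff $a=b$ iff $i=j$; similarly for both even; and if $i,j$ have opposite parities, the contribution is $\pm(a+b)$ with $a,b\ge 1$, which is nonzero. In every case we obtain a contradiction with $i\ne j$, finishing the proof.

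The argument is essentially a routine modular-arithmetic verification, and there is no real obstacle: the only content is that the prime $\rd$ was chosen strictly larger than the worst-case total $2\lceil d/2\rceil$, precisely so that two unit moves cannot cancel modulo $\rd$ except trivially. This is exactly the separation property that motivates the definition of $V_d$.
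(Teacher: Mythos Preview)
Your proof is correct and follows essentially the same approach as the paper: reduce to showing that $h(x')-h(x)$ is a nonzero integer of absolute value less than $\rd$. The only difference is that the paper simply asserts $0<|h(x')-h(x)|$ in the case $x'=x+e_i-e_j$ with $i\ne j$, whereas you supply the short parity case-check that justifies this; so your version is slightly more explicit but otherwise identical.
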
 
\begin{proof}
To see this note that $x'- e_i$ cannot equal $x$, since if it did then
\[0<|h(x')-h(x)|\le \lceil d/2\rceil <\rd.\]
Similarly, if $x'-e_i=x-e_j$ (and $i\ne j$ since $x$ and $x'$ are distinct) then $x\in V_d$ and $x'=x+e_i-e_j\in V_d$.  However 
\[0<|h(x')-h(x)|\le 2  \lceil d/2\rceil <\rd,\]
so $h(x')-h(x)\ne 0 \mod \rd$, giving a contradiction. 
\end{proof}

Recall that $\1=\sum_{i\in [d]} e_i$.  The following result is the crucial technical result that allows us to use the technology of {\it enhancements} due to Aizenman and Grimmett (see \cite{AG91, Gbook}).   Given $n\in \N$, let $Q_n$ be the box $ 
 [-n,n]^d\cap\mathbb{Z}^d$.  Using the notation in \eqref{boxdef} we have that $Q_n=[-n\1,n\1]$.

Fix $N>M>n>\rho_d+4$ and set $R=Q_N$ and $S=(-M\1)_-\cap R$.  Let $Q_{n,x,N}=(x+Q_n)\cap R$.  See Figure \ref{fig:squares1} for a depiction in 2 dimensions.  Let $\mc{P}_{M}(\bso_R)$ denote the set of vertices (in $R$) that are pivotal for $o \to -M\1$  in $\bso_R$ (or equivalently $o \to S$ in $\bso_R$). 
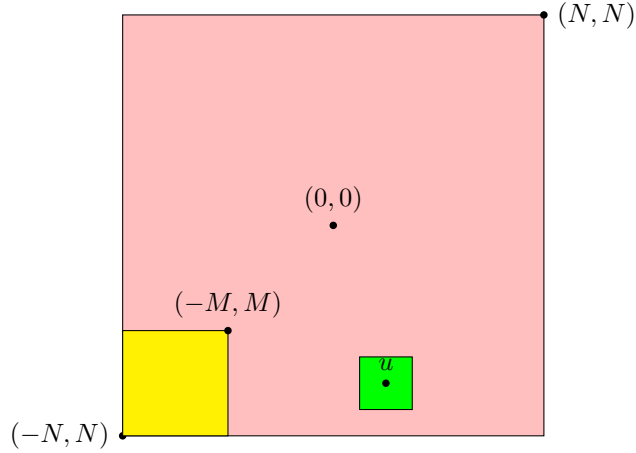
\begin{figure}
\begin{center}
\begin{tikzpicture}[scale=0.7]
\draw[fill=pink] (0,0)--(8,0)--(8,8)--(0,8)--(0,0);
\node[circle,fill=black,scale=0.3,label=left:{$(-N,N)$}] at (0,0)  {};
\draw[fill=yellow] (0,0)--(2,0)--(2,2)--(0,2)--(0,0);
\node[circle,fill=black,scale=0.3,label=above:{$(-M,M)$}] at (2,2)  {};
\node[circle,fill=black,scale=0.3,label=right:{$(N,N)$}] at (8,8)  {};
\node[scale=0.3,circle,fill=black,label={$(0,0)$}] at (4,4) {};
\draw[fill=green] (4.5,0.5)--(5.5,0.5)--(5.5,1.5)--(4.5,1.5)--(4.5,0.5);
\node[circle,fill=black,scale=0.3,label=above:{$u$}] at (5,1)  {};
\end{tikzpicture}
\end{center}
\caption{A depiction of $R=Q_N$, $S$, and $Q_{n,u,N}$ in 2 dimensions.}
\label{fig:squares1}
\end{figure}

\begin{proposition}
\label{prop:modification}
Let $n>\rd+4$, and $N>M>n$. Suppose that there exists $u\in \mc{P}_{M}(\bso_{R})\setminus V_d$.  
We may modify $\bso_{R}$ on $Q_{n,u,N}$, to give a new environment $\bar{\bso}_{R}$ for which $V_d\cap Q_{n,u,N}\cap \mc{P}_{M}(\bar{\bso}_{R})\ne \varnothing$.
\end{proposition}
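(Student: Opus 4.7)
The plan is to use the terrace structure encoding the pivotality of $u$, and then to deform that terrace locally within $Q_{n,u,N}$ so that a point of $V_d$ inherits the pivotal role. First I would set $\bso_u=\mc{E}_+$ (the blocked configuration) and invoke Lemma \ref{lem:localCoStructure} with $Q=R$ to obtain the $R$-terrace $\rt:=\lambda^{\sss R}_{\mc{C}^{\sss R}_o}$, which satisfies $\rt\subset\Omega_1$, $o\in\rt_+$, and $-M\1\notin\rt_+$. Since $u$ is pivotal, Lemma \ref{lem:pivotalsitesandcorners} gives $u\in\rt$ together with indices $i,j\in[d]$ such that $u+e_i\in R\setminus\rt$ and $u-e_j\in R\setminus\rt_+$.

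The geometric core of the proof is to produce, within $Q_{n,u,N}$, an axis-aligned straight segment of length $\rd$ lying in the ``drop edge'' of some terrace; that is, a set of the form $\{w_0+\ell e_k:0\le \ell<\rd\}$ with each such point in the terrace and each $w_0+\ell e_k-e_j\in R\setminus\rt_+$. Fix a direction $e_k$ with $k\neq j$, let $A$ be such a candidate segment starting near $u$ and lying strictly in the relative interior of $Q_{n,u,N}$ (this is where the buffer $n>\rd+4$ is used), and apply Lemma \ref{lem:minimalterrace} with $Q=Q_{n,u,N}$, $G=\rt_+\cap R$, and anchor set $A$ to produce a new $R$-terrace $\bar\rt=\rt^{\sss Q,A}$ whose $+$-set is the minimal $R$-solid-above set agreeing with $G$ off $Q^{\sss R:o}$ and containing $A$. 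By Lemma \ref{lem:minimalterrace}(iii) each point of $\bar\rt\cap Q_{n,u,N}$ sits above either $A$ or $\partial_{\sss R}Q_{n,u,N}$, and the containment $(\bar\rt)_+\subset \rt_+$ (a consequence of minimality) preserves the drop condition $a-e_j\in R\setminus(\bar\rt)_+$ for each $a\in A$, so $A$ is a genuine drop edge of $\bar\rt$. By Lemma \ref{lem:Vd} the segment $A$ contains a point $v\in V_d$, and by Lemma \ref{lem:Edspreadout} no other point of $V_d$ lies among $\{v\}\cup\{v-e_r:r\in[d]\}$ that would be affected by the modification.

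Now I would define $\bar\bso_R$ by $\bar\bso_x=\bso_x$ off $Q_{n,u,N}$, $\bar\bso_x=\mc{E}_+$ for $x\in\bar\rt\cap Q_{n,u,N}$, and $\bar\bso_x=\mc{E}$ for $x\in Q_{n,u,N}\setminus\bar\rt$. Then $\bar\rt\subset\Omega_1(\bar\bso_R)$ by construction, so by Lemma \ref{lem:Qterrace_crossing} there is no path $o\to -M\1$ in $R$ when $\bar\bso_v=\mc{E}_+$. When $\bar\bso_v$ is switched to $\mc{E}$, the $-e_j$ arrow at $v$ reaches $v-e_j\in R\setminus\bar\rt_+$; from there a path to $-M\1$ can be constructed by concatenating with the escape route that made $u$ pivotal in the original environment (this route lies in $R\setminus \rt_+$ and, once it leaves $Q_{n,u,N}$, uses the unmodified part of $\bso$). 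This shows $v\in \mc{P}_M(\bar\bso_R)\cap V_d\cap Q_{n,u,N}$.

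The main obstacle is the deformation step: one must guarantee that the segment $A$ actually becomes a straight drop edge of $\bar\rt$, ensuring that the push-up operations implicit in Lemma \ref{lem:minimalterrace} neither include spurious points of $\bar\rt$ immediately above $A$ (which would destroy the corner structure at $v$) nor destroy the drop condition at any $a\in A$, all while preserving the boundary behaviour of $\rt$ on $\partial_{\sss R}Q_{n,u,N}$ so that the escape path outside the box remains available after flipping $\bar\bso_v$. The buffer $n>\rd+4$ separates $A$ from $\partial_{\sss R}Q_{n,u,N}$ enough to control the interaction of the pushed-up interior with the fixed boundary, and the spread-out structure of $V_d$ (Lemma \ref{lem:Edspreadout}) isolates $v$ from other potential $V_d$ complications; together these are what make the delicate verification possible.
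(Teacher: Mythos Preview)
Your overall architecture matches the paper's: obtain the $R$-terrace $\rt$ from $\mc{C}_o^{\sss R}$, deform it inside $Q_{n,u,N}$ via Lemma~\ref{lem:minimalterrace}, locate $\bar u\in V_d$ on the deformed terrace, and then set the environment to $\mc{E}_+$ on the new terrace and $\mc{E}$ elsewhere in the box. But the geometric core of your argument has two genuine gaps that the paper's proof handles quite differently.

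\textbf{The drop condition along $A$ is not established.} You take $A$ to be a prescribed straight segment $\{u+\ell e_k:0\le\ell<\rd\}$ and assert that after pushing up, each $a-e_j\in R\setminus(\bar\rt)_+$. Your justification is that $(\bar\rt)_+\subset\rt_+$, but this only helps if you already knew $a-e_j\notin\rt_+$ for every $a\in A$. You know this only for $a=u$ (from Lemma~\ref{lem:pivotalsitesandcorners}); for $\ell\ge1$ the point $u+\ell e_k-e_j$ may well lie in $\rt_+$. Worse, by the minimality description in Lemma~\ref{lem:minimalterrace}(ii), $(\bar\rt)_+\cap R$ contains $z_+$ for every $z\in\rt_+\cap\partial_{\sss R}Q$, so the boundary data can force $u+\ell e_k-e_j\in(\bar\rt)_+$ regardless of how you push up the interior. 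Thus $A$ need not be a drop edge of $\bar\rt$, and there is no reason the points of $A$ even lie in $\bar\rt$ (they are only guaranteed to lie in $(\bar\rt)_+$).

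\textbf{Connecting $o$ to $\bar u$ is not handled.} Even if $v\in\bar\rt$, to reach $v$ from $o$ through $(\bar\rt)_+\setminus\bar\rt$ you need some $v+e_r\in R\setminus\bar\rt$, i.e.\ $v\notin H^{\sss R}_{\bar\rt}$. You acknowledge this as ``spurious points of $\bar\rt$ immediately above $A$'' but give no mechanism to prevent it. The paper devotes substantial effort to this: it uses a \emph{small} anchor set $A=\{u+e_1-e_i:i\in I\}$ (not a long segment), pushes up corners in $\bar Q^{\sss R:o}\setminus A$, and then separately excises corners in $H^{\sss R}_{\rt'}\setminus o_+$, iterating until $H^{\sss R}_{\OLa}\subset o_+$. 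Only \emph{after} this does the paper search for a long segment, exploiting the absence of corners: either the terrace runs straight from $u$ for $n-3$ steps, or Lemma~\ref{lem:minimalterrace}(iii) forces a long segment back to $\partial_{\sss R}\bar Q$. The location of the long segment is discovered, not prescribed, and that is precisely what makes the boundary constraints tractable. Your proposal reverses this order and thereby loses control of both the drop condition and the $H^{\sss R}$ condition.
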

Before we prove Proposition \ref{prop:modification} let us demonstrate how it is used to prove Proposition \ref{prp:strictdisturbedinequality}.  Let $|A|$ denote the cardinality of $A\subset \Z^d$.  For $\bso_{R}\in \{\mc{E}_+,\mc{E}\}^{R}$, write $P_{p,q}(\bso_{R})$ to denote $\P_{p,q}(\bsg_{R}=\bso_{R})$.

\begin{lemma}
\label{lem:pivotalinequality}
For every $p,q\in (0,1)$ and $n>\rd+4$ there exists
$\alpha(p,q,n)\in (0,\infty)$  
such that 
\begin{equation*}
\mathbb{E}_{p,q}\big[|\mc{P}_{M}(\bs{\mc{G}}_{R}) \setminus V_d|\big]\le\alpha(p,q,n) \mathbb{E}_{p,q}\big[|\mc{P}_{M}(\bs{\mc{G}}_{R}) \cap  V_d|\big]
\end{equation*}
for every $N>M>n$.  If $\pi=[r_1,r_2]$, where $0<r_1<r_2<1$, then for fixed $n$, $\sup\limits_{p,q\in \pi}\alpha(p,z,n)<\infty$.
\end{lemma}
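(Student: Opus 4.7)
The plan is to deduce this ``counting pivotals'' inequality from Proposition \ref{prop:modification} by a standard bounded-to-one mapping argument in the spirit of Aizenman--Grimmett enhancements. I will rewrite both expectations as sums over pairs (pivotal site, ambient environment), construct a (finitely-many-valued) map $\phi$ from pairs indexed by pivotals outside $V_d$ to pairs indexed by pivotals inside $V_d$, and control both the multiplicity of $\phi$ and the ratio of probabilities between paired configurations.

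To set this up, let
\[
A=\{(u,\bso_R):u\in R\setminus V_d,\ u\in\mc{P}_M(\bso_R)\},\qquad B=\{(v,\bar\bso_R):v\in R\cap V_d,\ v\in\mc{P}_M(\bar\bso_R)\},
\]
so that $\E_{p,q}[|\mc{P}_M(\bsg_R)\setminus V_d|]=\sum_{(u,\bso_R)\in A}P_{p,q}(\bso_R)$ and the analogous identity holds for $B$. Given $(u,\bso_R)\in A$, Proposition \ref{prop:modification} supplies an environment $\bar\bso_R$ agreeing with $\bso_R$ off $Q_{n,u,N}$ and a site $v\in V_d\cap Q_{n,u,N}\cap\mc{P}_M(\bar\bso_R)$; choosing one such $v$ (say the lexicographically smallest) yields a well-defined $\phi(u,\bso_R)=(v,\bar\bso_R)\in B$.

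The next step is to bound the multiplicity of $\phi$ and the cost in probability. Any preimage $(u,\bso_R)$ of a given $(v,\bar\bso_R)$ must satisfy $v\in Q_{n,u,N}$, which forces $u\in v+\{-n,\dots,n\}^d$ and gives at most $(2n+1)^d$ choices for $u$; once $u$ is fixed, $\bso_R$ agrees with $\bar\bso_R$ outside $Q_{n,u,N}$ and is otherwise arbitrary, yielding at most $2^{(2n+1)^d}$ further choices. Hence $\phi$ is at most $K_n:=(2n+1)^d\,2^{(2n+1)^d}$-to-one. For the probability ratio, $\bso_R$ and $\bar\bso_R$ differ at at most $(2n+1)^d$ sites, and at each differing site the ratio of single-site marginals is one of $p/(1-p)$, $(1-p)/p$, $q/(1-q)$, $(1-q)/q$, hence bounded by
\[
\beta_{p,q}:=\frac{\max(p,1-p,q,1-q)}{\min(p,1-p,q,1-q)}.
\]
Therefore $P_{p,q}(\bso_R)\le\beta_{p,q}^{(2n+1)^d}P_{p,q}(\bar\bso_R)$, and summing over $A$ and regrouping over $B$ via $\phi$ yields the desired inequality with $\alpha(p,q,n)=K_n\,\beta_{p,q}^{(2n+1)^d}$. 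The uniform boundedness over $p,q\in[r_1,r_2]$ is then immediate, since $\min(p,1-p,q,1-q)\ge\min(r_1,1-r_2)>0$ on this compact and $K_n$ does not depend on $(p,q)$.

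The main obstacle is not in the above lemma but in Proposition \ref{prop:modification} itself, whose proof must perform the delicate geometric surgery on terraces needed to relocate a pivotal site into $V_d$ while only modifying the environment inside $Q_{n,u,N}$. Once that proposition is in hand, the bookkeeping outlined above is essentially routine.
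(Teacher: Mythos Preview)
Your proof is correct and follows essentially the same approach as the paper: both invoke Proposition \ref{prop:modification} to associate to each pivotal $u\notin V_d$ a locally modified environment with a pivotal in $V_d\cap Q_{n,u,N}$, then bound the multiplicity of this association by $(2n+1)^d\cdot 2^{(2n+1)^d}$ and the probability ratio by a factor depending only on $p,q,n$. Your packaging via an explicit bounded-to-one map $\phi$ is slightly cleaner than the paper's direct sum manipulation, but the constants and the logic are identical.
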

\begin{proof}
Fix $n> \rd+4$ and assume $N>M>n$.  Define $R=Q_N$ and $Q_{n,u,N}$ as above. Suppose $u\in \mc{P}_{M}(\bs{\mc{G}}_{R})\setminus V_d$. Since $o\in V_d$, we know that $u\neq o$.  

If $\bso_{R}$ is a configuration for which $u\in \mc{P}_{M}(\bso_{R})\setminus V_d$  then 
by Proposition \ref{prop:modification} there exists a modification $\bar{\bso}_{R}$ (agreeing with $\bso_{R}$ on $R\setminus (Q_n+u)$) such that we can find a $\bar{u}\in \mc{P}_{M}(\bar{\bso}_{R})\cap V_d$. We can bound $\frac{P_{p,q}(\bso_{R})}{P_{p,q}(\bar{\bso}_{R})}$ by a constant $a_1=a_1(p,q,n)$, since this involves changing the state of at most $(2n+1)^d$ vertices.  

For any given $u$ we may likewise bound the number of $\bs{\omega}_{R}$ that can give rise to a given $\bs{\bar\omega}_{R}$ by a constant $a_2=a_2(n)$, since only the sites in $u+Q_n$ may change. Thus we may take $a_2=2^{(2n+1)^d}$ (though this is a vast overcount since most configurations counted won't make the given $u$ pivotal). For any given $\bar{u}\in R\cap V_d$ there are at most $a_3=a_3(n)=(2n+1)^d$ choices for $u$ that could make $u$ pivotal in this way, because $|u^{\sss[i]}-\bar u^{\sss[i]}|\le n$ for each $i$. Therefore 
\begin{align*}
\mathbb{E}_{p,q}\big[|\mc{P}_{M}(\bs{\mc{G}}_{R}) \setminus V_d|\big]&=\sum_{u\in R\setminus V_d}\,\,\sum_{\substack{\bs{\omega}_{R} \text{ with }\\u \text{ pivotal}}}P_{p,q}(\bs{\omega}_{R})\\
&\le \sum_{u\in R\setminus V_d}\,\,\sum_{
\substack{\bs{\omega}_{R} \text{ with }\\ u \text{ pivotal}}}\,\,
\sum_{\substack{\bar u \in R\cap V_d:\\ \|\bar u-u\|_\infty\le n}}\,\,
\sum_{\substack{\bs{\bar\omega}_{R} \text{ with }\bar u \text{ pivotal}:\\
\bs{\bar\omega}_{R} =\bs{\omega}\text{ off }Q_n+u }}P_{p,q}(\bs{\omega}_{R})\\
&\le \sum_{\bar u\in R\cap V_d}\,\, 
\sum_{\substack{\bs{\bar\omega}_{R} \text{ with }\\ \bar u \text{ pivotal}}}\sum_{\substack{u \in R\setminus V_d:\\ \| \bar u-u\|_\infty\le n}}\sum_{
\substack{\bs{\omega}_{R}\text{ with } u \text{ pivotal}:\\\bs{\omega}_{R} =\bs{\bar \omega}_{R} \text{ off }Q_n+u}}
P_{p,q}(\bs{\omega}_{R})\\
&\le a_1
\sum_{\bar u\in R\cap V_d}\,\, 
\sum_{\substack{\bs{\bar\omega}_{R} \text{ with }\\ \bar u \text{ pivotal}}}
P_{p,q}(\bs{\bar\omega}_{R})\sum_{\substack{u \in R\setminus V_d:\\ \|\bar u-u\|_\infty\le n}}
\sum_{
\substack{\bs{\omega}_{R}\text{ with } u \text{ pivotal}:\\ \bs{\omega}_{R} =\bs{\bar \omega}_{R} \text{ off }Q_n+u}}1\\
&\le a_1
a_2a_3\sum_{\bar u\in R\cap V_d}\,\, 
\sum_{\substack{\bs{\bar\omega}_{R} \text{ with }\\ \bar u \text{ pivotal}}}P_{p,q}(\bs{\bar\omega}_{R})\\
&=\alpha(p,q,n) \mathbb{E}_{p,q}[\text{$\#$ of pivotal vertices in $R\cap V_d$}]
\end{align*}
and the constants involved depend on $n$ but not $M$ or $N$. The boundedness statement is also clear from this argument.
\end{proof}

\begin{proof}[Proof of Proposition \ref{prp:strictdisturbedinequality} (assuming Proposition \ref{prop:modification})]
Fix $n>\rd+4$.
Let $N>M>n$ both be large and as before, work on a finite box $R=Q_N=[-N,N]^d\cap\Z^d$. Let $\beta(p,q)=1-\theta(p,q)$ be the probability that $\mc{C}_x(p,q)\ne \Z^d$. 

Let $\beta_M^N(p,q)$ be the probability that $o$ fails to connect to $S$ via any path lying in $R$. Then $\beta_M^N(p,q)$ decreases with $N$, increases with $M$, and 
\begin{equation}
\lim_{M\to\infty}\lim_{N\to\infty}\beta_M^N(p,q)=\beta(p,q).\label{double_limit}
\end{equation}
To see this, observe that $1-$ the limit equals the probability of the event $C$ that $o$ connects to every $(-M\1)_-$. Clearly $\P_{p,q}(C)\ge \P_{p,q}(\mc{C}_o(p,q)=\Z^d)=\theta(p,q)=1-\beta(p,q)$. Conversely for every $z\in\mathbb{Z}^d$ we can find an $M$ such that $z\in(-M\1)_+$. Therefore if $y\in(-M\1)_-\cap \mc{C}_o(p,q)$ we also have $z\in(-M\1)_+\subset y_+\subset \mc{C}_o(p,q)_+$. In other words, $\mc{C}_o(p,q)=\Z^d$ on the event $C$, showing that $\P_{p,q}(C)\le \P_{p,q}(\mc{C}_o(p,q)=\Z^d)$.  This verifies \eqref{double_limit}.

Fix $M$ and $N$. Consider the sites in $R$ that are pivotal for $o\to -M\1$ in $\mc{G}_{R}$ (as defined after Corollary \ref{lem:increasingevent}). By Corollary  \ref{lem:increasingevent}, the event that $o$ fails to connect to $S$ via a path lying in $R$ (under the coupling used above) increases with both $p$ and $q$. Therefore a version of Russo's formula applies (see \cite{Gbook}), showing that 
\begin{align*}
\frac{\partial}{\partial p}\beta_M^N(p,q)&=\mathbb{E}_{p,q}[\text{$\#$ of pivotal vertices in $R\setminus V_d$}]\\
\frac{\partial}{\partial q}\beta_M^N(p,q)&=\mathbb{E}_{p,q}[\text{$\#$ of pivotal vertices in $R\cap V_d$}].
\end{align*}
Note that Russo's formula requires that there be only finitely many sites under consideration, so this is the point where our proof requires $M$ and $N$ to be finite, i.e. the use of local terraces rather than terraces.

Proposition \ref{prop:modification} implies  Lemma \ref{lem:pivotalinequality}, from which Russo's formula gives that
$$
\frac{\partial}{\partial p}\beta_M^N(p,q)\le \alpha(p,q,n)
\frac{\partial}{\partial q}\beta_M^N(p,q).
$$

We now use an elementary argument (see \cite{Gbook}) to obtain an inequality of the form
$$
\beta_M^N(p,q)\ge \beta_M^N(p+\delta, q-a\delta).
$$

Choose $\epsilon_0>0$ so that $p_c+2\epsilon_0<1$ and $0<p_c-2\epsilon_0$. Set $\pi=[p_c-\epsilon_0,p_c+\epsilon_0]$, and let $a$ be the bound on $\alpha(p,q,n)$ over $\pi$, given by Lemma \ref{lem:pivotalinequality}. By continuity of $f$ and the fact that $p>f(p)$, we can find a $\delta_0>0$ such that $p-a\delta>f(p+\delta)$ whenever $p\in\pi$ and $0<\delta<\delta_0$. Reducing $\delta_0$ if necessary, we can ensure that $\delta_0(a+1)<\epsilon_0$.

 Let $0<\delta<\delta_0$. Take $p=q=p_c-\frac{\delta}{2}$. Then $p=q<p_c<p+\delta$, and all of $p,q,p+\delta,q-a\delta\in\pi$.   By the mean value theorem, there is some $\delta'$ between $0$ and $\delta$ such that 
\begin{align}
0\le &\beta_M^N(p+\delta,q-a\delta)\nonumber\\
&\qquad =\beta_M^N(p,q)+\delta
\Big[\frac{\partial \beta_M^N}{\partial p}(p+\delta',q-a\delta')-a
\frac{\partial \beta_M^N}{\partial q}(p+\delta',q-a\delta')\Big]\nonumber\\
&\qquad \le \beta_M^N(p,q)+\delta[\alpha(p,q,n)-a]
\frac{\partial \beta_M^N}{\partial q}(p+\delta',q-a\delta')\nonumber\\
&\qquad \le \beta_M^N(p,q).\label{betabeta}
\end{align}
By \eqref{cpp} and \eqref{double_limit}, $\lim_{M\to\infty} \lim_{N\to\infty}\beta_M^N(p,p)= \beta(p,p)=0$, so taking limits in \eqref{betabeta} we get that 
$\lim_{M\to\infty}\lim_{N\to\infty}\beta_M^N(p+\delta,p-a\delta)= 0$ too. Recalling that $p-a\delta>f(p+\delta)$, we conclude that $\beta(p+\delta, f(p+\delta))=0$. In particular, $p_c(f,d)\ge p+\delta>p_c(d)$, as required. 
\end{proof}

\subsection{Local modification}
\label{sec:modification}
In order to complete the proof of Proposition \ref{prp:strictdisturbedinequality} it therefore remains to prove Proposition \ref{prop:modification}.  Recall that $E=\mc{E}_+$ and $F=\mc{E}_+$.  Recall also that for boxes $Q\subset R$, $Q^{\sss R:o}$ denotes the set of sites in $Q$ that are not adjacent to sites in $R\setminus Q$.  

The strategy of the proof is to start with an $R$-terrace of sites in $\Omega_1$ that passes through $u$ (see e.g.~Figure \ref{fig:enhance0}), and then push up corners in a box around $u$, except for a few that are close to $u$. Because of those nearby sites where we don't push up, the resulting terrace must still pass close to $u$. We will show that the absence of corners means that either $u_+$ contains long straight segments of the terrace, or the geometry forces there to be long straight segments from near $u$ backwards to the boundary of the box. Either way, this terrace will contain a point $\bar u$ of $V_d$, and one can then modify $\bs{\omega}$ to make that point pivotal and the new terrace $\subset\Omega_1$. 

\begin{figure}
\begin{center}
\begin{tikzpicture}[scale=0.9]
\draw[fill=pink] (0,0)--(8,0)--(8,8)--(0,8)--(0,0);
\node[circle,fill=black,scale=0.3,label=left:{$(-N,-N)$}] at (0,0)  {};
\draw[fill=yellow] (0,0)--(2,0)--(2,2)--(0,2)--(0,0);
\node[circle,fill=black,scale=0.3,label=above:{$$}] at (2,2)  {};
\node[circle,fill=black,scale=0.3,label=right:{$(N,N)$}] at (8,8)  {};
\draw[fill=green] (3.5,0.5)--(6.5,0.5)--(6.5,3.5)--(3.5,3.5)--(3.5,0.5);
\node[circle,fill=black,scale=0.3,label=left:{$u$}] at (5,2)  {};
\node[circle,fill=black,scale=0.3,label=left:{$o$}] at (4,4)  {};

\draw[->,thick,-stealth] (6.5,-0.5)--(6.5,-0.25);
\draw[->,thick,-stealth] (6.5,-0.5)--(6.75,-0.5);

\draw[->,thick,-stealth] (6,0)--(6,0.25);
\draw[->,thick,-stealth] (6,0)--(6.25,0);
\draw[->,thick,-stealth] (6,0.5)--(6,0.75);
\draw[->,thick,-stealth] (6,0.5)--(6.25,0.5);
\draw[->,thick,-stealth] (5.5,1)--(5.75,1);
\draw[->,thick,-stealth] (5.5,1)--(5.5,1.25);

\draw[->,thick,-stealth] (5,1.5)--(5,1.75);
\draw[->,thick,-stealth] (5,1.5)--(5.25,1.5);

\draw[->,thick,-stealth] (5,2)--(5,2.25);
\draw[->,thick,-stealth] (5,2)--(5.25,2);
\draw[->,thick,-stealth] (5,2.5)--(5,2.75);
\draw[->,thick,-stealth] (5,2.5)--(5.25,2.5);

\draw[->,thick,-stealth] (4.5,3)--(4.75,3);
\draw[->,thick,-stealth] (4.5,3)--(4.5,3.25);

\draw[->,thick,-stealth] (4,3)--(4.25,3);
\draw[->,thick,-stealth] (4,3)--(4,3.25);

\draw[->,thick,-stealth] (3.5,3)--(3.75,3);
\draw[->,thick,-stealth] (3.5,3)--(3.5,3.25);
\draw[->,thick,-stealth] (3,3)--(3.25,3);
\draw[->,thick,-stealth] (3,3)--(3,3.25);

\draw[->,thick,-stealth] (2.5,3.5)--(2.75,3.5);
\draw[->,thick,-stealth] (2.5,3.5)--(2.5,3.75);

\draw[->,thick,-stealth] (2,4)--(2.25,4);
\draw[->,thick,-stealth] (2,4)--(2,4.25);
\draw[->,thick,-stealth] (1.5,4)--(1.75,4);
\draw[->,thick,-stealth] (1.5,4)--(1.5,4.25);

\draw[->,thick,-stealth] (1,4)--(1.25,4);
\draw[->,thick,-stealth] (1,4)--(1,4.25);

\draw[->,thick,-stealth] (0.5,4)--(0.75,4);
\draw[->,thick,-stealth] (0.5,4)--(0.5,4.25);

\draw[->,thick,-stealth] (0,4.5)--(0.25,4.5);
\draw[->,thick,-stealth] (0,4.5)--(0,4.75);

\draw[->,thick,-stealth] (0,5)--(0.25,5);
\draw[->,thick,-stealth] (0,5)--(0,5.25);

\draw[->,thick,-stealth] (0,5.5)--(0.25,5.5);
\draw[->,thick,-stealth] (0,5.5)--(0,5.75);

\draw[->,thick,-stealth] (-0.5,6)--(-0.25,6);
\draw[->,thick,-stealth] (-0.5,6)--(-0.5,6.25);

\end{tikzpicture}
\end{center}
\caption{A depiction in 2 dimensions of an $R$ terrace in $\Omega_1$, passing through $u$.}
\label{fig:enhance0}
\end{figure}

The argument is complicated by the possibility of $u$ lying close to a side of $R$, or close to $o_+$, or close to $S$. All these possibilities result in special cases of the argument. The reader is advised to ignore the caveats and special cases at first, i.e. on first reading to assume that $u+Q_n$ intersects none of $o_+$, $\partial R$, or $S$.

\begin{proof}[Proof of Proposition \ref{prop:modification}]
{\hphantom{blank}}

Recall that $N>M>n>\rd+4$.
Let $R=Q_N$, $Q:=Q_{n,u,N}=(u+Q_n)\cap R$ and $S=(-M\1)_-\cap R$.
Set  $\bar Q=R\cap (u+Q_{n-2})\subset Q$.   We note here that 
\begin{equation}
\bar Q^{\sss R:o}\subset \bar Q\subset Q\setminus \partial_{\sss R}Q=:Q^{\sss R:o},\label{bar_int}
\end{equation}
 since if $y\in \bar Q$ then $\|y-u\|_\infty\le n-2$, while all $y\in \partial_{\sss R} Q$ have $\|y-u\|_\infty=n$, and all $y\in \partial_{\sss R} \bar Q$ have $\|y-u\|_\infty=n-2$.

Let $\bso_R$ and $u\in \mc{P}_{M}(\bso_{R})\setminus V_d$ be as in the statement of the lemma.  
Let $\bso^1$ and $\bso^0$ be in $\{\mc{E}_+,\mc{E}\}^R$, with $\bso^1_u=\mc{E}_+$ and $\bso^0_u=\mc{E}$ and both agreeing with $\bso_R$ on $R\setminus \{u\}$.
\bigskip

\noindent 
{\bf Terrace properties:} 
Since $u$ is pivotal for $o \to -M\1$,  by Lemma \ref{lem:localCoStructure},  $\rt=\lambda^{\sss R}_{\mc{C}^{\sss R}_o(\bso^1)}$ is an $R$-terrace satisfying $\rt\subset \mc{C}^{\sss R}_o(\bso^1) \cap \Omega_1$, and $\rt_+=(\mc{C}^{\sss R}_o(\bso^1))_+$, and $S\subset R\setminus \rt_+$.  Since $o\in \rt_+$ and $-M\1\notin \rt_+$ (and $u$ is pivotal for $o \to -M\1$), by Lemma \ref{lem:pivotalsitesandcorners} $u\in \rt$ and there is an $i$ with $ u+e_i \in R\setminus \rt$.  Because $\rt=\lambda^{\sss R}_{\mc{C}^{\sss R}_o(\bso^1)}$,  Lemma \ref{lem:Qterraceproperties} implies that $R\cap \rt_+=R\cap \mc{C}^{\sss R}_o(\bso^1)=\mc{C}^{\sss R}_o(\bso^1)$ (recall that $\mc{C}^{\sss R}_o(\bso^1)$ is $R$-solid above).  Therefore 
\begin{equation}
\rt=\lambda^{\sss R}_{R\cap \rt_+}.  \label{Lambda=lambda}
\end{equation}
Moreover, from Lemma \ref{lem:Qirreducible} we have that
\begin{equation}
H^{\sss R}_{\qt}\subset o_+. \label{Hsuboplus}
\end{equation}

\bigskip

We will find an $R$-terrace $\OLa$ with the following properties:
\begin{align}
&\text{$\OLa=\lambda^{\sss R}_{R\cap(\OLa)_+}$}\label{eqn0}\\
&\text{$\OLa=\rt$ on $R\setminus Q^{\sss R:o}$, i.e.~$\OLa\cap (R\setminus Q^{\sss R:o})=\rt\cap (R\setminus Q^{\sss R:o})$
}\label{eqn1}\\
&\text{$(\OLa)_+=\rt_+$ on $R\setminus Q^{\sss R:o}$, i.e.~$(\OLa)_+\cap (R\setminus Q^{\sss R:o})=\rt_+ \cap (R\setminus Q^{\sss R:o})$}\label{eqn2}\\
&(\OLa)_+\subset\rt_+\label{eqn6}\\
& o\in(\OLa)_+\label{eqn3}\\
& S\subset R\setminus(\OLa)_+\label{eqn4}\\
&\text{$\exists \bar u\in\OLa\cap V_d\cap \bar{Q}$ such that either $\bar u\notin H^{\sss R}_{\OLa}$ or $\bar u\in o_+$.}\label{eqn5}
\end{align}
We'll assume for now that this can be done, and will show how to modify $\bs{\omega}$ on $Q$ so as to make $\bar u\in V_d\cap Q$ (appearing in \eqref{eqn5}) pivotal.
\bigskip

\noindent{\bf Finding paths in $R$:}
Define $\bs{\bar \bso}^1\in \{\mc{E}_+,\mc{E}\}^R$ as follows.   
Let $\bs{\bar\omega}^1$ agree with $\bs{\omega}^1$ (and hence $\bso_R$) off $Q$.  Choose $\bs{\bar\omega}^1$ to be $\mc{E}_+$ on $\OLa$, and note that this is consistent with $\bso^1$ off $Q$, by \eqref{eqn1}.

Because $\OLa$ is an $R$-terrace, Lemma \ref{lem:Qterrace_crossing} together with \eqref{eqn3} and \eqref{eqn4} shows that there is no path in $\bs{\bar\omega}^1$ from $o$ to $S$, regardless of how $\bs{\bar\omega}^1$ is defined at the remaining sites of $Q$. Set $\bs{\bar\omega}^1$ to be $\mc{E}$ at any such remaining sites of $Q$.

Now let $\bs{\bar\omega}^0$ agree with $\bs{\bar\omega}^1$ at all sites, except at $u$ where we take $\bs{\bar\omega}^0_u=\mc{E}$ (note that $\bs{\bar\omega}^1_u=\mc{E}_+$ since $u \in \OLa$).

Because $u$ is pivotal, we know that $\bs{\omega}^0$ connects $o$ to some $y^{\sss(0)}\in S$. Taking its initial and final portions, we have a (self-avoiding) path in $R$ consistent with $\bs{\omega}^0$ that runs from $o$ to $u$ and another from $u$ to $y^{\sss(0)}$.  That first segment of path lies in $\mc{C}^{\sss R}_o(\bso^1)\subset\rt_+$ since it does not make use of the environment at $u$, i.e.~is consistent with $\bso^1$ as well as $\bso^0$.

Let $y^{\sss(1)}$ be the first point in the former path that is in $Q$ (this is well defined since $u\in Q$), and let $y^{\sss(2)}$ be the last point that the latter path encounters $Q$.
In other words, there is a path from $o$ to $y^{\sss(1)}\in \rt_+$ 
that encounters $Q$ only at its last step, and a path from $y^{\sss(2)}$ to $y^{\sss(0)}$ that never returns to $Q$ after its first step. We claim these paths are consistent with $\bs{\bar\omega}^0$, and therefore that 
\begin{equation}
\text{$\bs{\bar\omega}^0$ connects $o$ to $y^{\sss(1)}$ and $y^{\sss(2)}$ to $y^{\sss(0)}$}. \label{claaaim}
\end{equation}
Note that $\bs{\omega}^0$ makes these connections, that $\bs{\bar\omega}^0=\bs{\omega}^0$ off $Q$, and that the only environment in $Q$ used by these paths is at $y^{\sss(2)}$. We will show that $y^{\sss(2)}\notin\OLa$, so $\bs{\bar\omega}^0_{y^{\sss(2)}}=\mc{E}$ by construction, which makes $\bs{\bar\omega}^0$ consistent with whatever step our path takes to leave $y^{\sss(2)}$.  This will therefore be sufficient to verify the claim.

If $Q$ intersects $S$ it could happen that $y^{\sss(2)}=y^{\sss(0)}\in S\cap Q$ (just as it could happen that $y^{\sss(1)}=o$, if $o\in Q$, though otherwise $y^{\sss(1)}\in\partial_{\sss R} Q$). Regardless, we know that either $y^{\sss(2)}\in S$ or $y^{\sss(2)}\in\partial_{\sss R} Q$. Either way, we have that $y^{\sss(2)}\neq u$. Any path in $R$ from $\rt_+$ to $S$, that is consistent with $\bs{\omega}^0$, must pass through $u\in Q$. Therefore $y^{\sss(2)}\notin\rt_+$ by definition,  so $y^{\sss(2)}\notin(\OLa)_+$ by \eqref{eqn6}.  In particular, $y^{\sss(2)}\notin\OLa$, establishing the claim.
\bigskip

We now show that $\bar u$ connects to $S$ in $\bs{\bar\omega}^0$ and $o$ connects to $\bar u$ in $\bs{\bar\omega}^0$.  This will show that $o$ connects to $S$ in $\bs{\bar\omega}^0$.  On the other hand $o$ does not connect to $ S$ in $\bs{\bar\omega}^1$ (which agrees with $\bs{\bar\omega}^0$ except at $\bar u$) showing that $\bar u$ is pivotal.

\bigskip

\noindent {\bf Connecting $\bar u$ to $S$:}
Because $\bar u\in\bar Q\cap\OLa$, \eqref{eqn0} allows us to find an $i \in [d]$ such that  $\bar u-e_i\in R\setminus (\OLa)_+$ (and $\bar u-e_i\in Q_{n-1}$). Therefore in fact $\bar u-e_i\in Q\setminus(\OLa)_+$. Using (iii) of Lemma \ref{lem:QQresults}, there is a self-avoiding nearest neighbour path in $Q\setminus(\OLa)_+$ that connects $\bar u-e_i$ to $y^{\sss(2)}$. All the $\bs{\bar\omega}^0$ environments along this path 
are of type $\mc{E}$, so 
$\bs{\bar\omega}^0$ connects $\bar u$ to $y^{\sss(2)}$ and hence to $S$.
\bigskip

\noindent {\bf Connecting $o$ to $\bar u$:}
\eqref{eqn5} tells us that either $\bar u\notin H^{\sss R}_{\OLa}$ or $\bar u\in o_+$.  In the latter case  
we may trivially connect $o$ to $\bar u$ in $\bs{\bar\omega}^0$ (since such a connection can be made using only $+$ steps, which are available from every site) as required.

We may therefore assume that $\bar u\notin H_{\OLa}^{\sss R}$. Accordingly, we may find $\bar u+e_j\in R\cap(\OLa)_+\setminus\OLa$, and since $\bar u \in \bar Q$, in fact $\bar u+e_j\in Q\cap(\OLa)_+\setminus\OLa$.  As remarked above, either $y^{\sss(1)}\in\partial_{\sss R} Q$ or $y^{\sss(1)}=o\in V_d$, so we must have $y^{\sss(1)}\neq u$.  Since $y^{\sss(1)}\in\rt_+$, using either \eqref{eqn2} (when $y^{\sss(1)}\in\partial_{\sss R} Q$) or \eqref{eqn3} we conclude that $y^{\sss(1)}\in(\OLa)_+$.
 
 If $y^{\sss(1)}\notin\OLa$ then (iii) of Lemma \ref{lem:QQresults} allows us to connect $y^{\sss(1)}$ to $\bar u+e_j$ via a self-avoiding path in $Q\cap(\OLa)_+\setminus\OLa$.  All the $\bs{\bar\omega}^0$ environments along this path will be of type $\mc{E}$, including that at $\bar u+e_j$. Therefore $\bs{\bar\omega}^0$ connects $y^{\sss(1)}$ to $\bar u$ and hence (by \eqref{claaaim}) also to $\bar u$.  This shows that $\bar u$ is pivotal when $y^{\sss(1)}\notin\OLa$.

It is however possible that $y^{\sss(1)}\in\OLa$, in which case we require an additional argument. In this case, 
we may take a step from $y^{\sss(1)}$ in some direction $e_k$ that keeps us in $Q$, provided such a $k$ exists. While we are still in $\OLa$, repeat. Eventually one of the following will happen: 
\begin{itemize}
\item[(i)] we reach $\bar u$ itself; or
\item[(ii)] we reach some point $y^{\sss(3)}\in Q\cap(\OLa)_+\setminus\OLa$; or 
\item[(iii)] we exhaust the possible $+$ steps that keep us in $Q$, before leaving $\OLa$ (i.e.~we reach the point  $b$, where $Q=[a,b]$, and $b\in\OLa$). 
\end{itemize}
In fact (iii) cannot happen, as we now argue. Recall that we had found a $\bar u+e_j\in Q\cap(\OLa)_+\setminus\OLa$.   But $ b\in (\bar u+e_j)_+$, so Lemma \ref{lem:QL} rules out having $ b\in\OLa$.

In the case (i) we're done, i.e.~$\bs{\bar\omega}^0$ connects $y^{\sss(1)}$ to $\bar u$. In the case (ii), since $y^{\sss(3)}\in Q\cap(\OLa)_+\setminus\OLa$,  Lemma \ref{lem:QQresults} allows us to connect $y^{\sss(3)}$ to $\bar u+e_j$ via a self-avoiding path in $Q\cap(\OLa)_+\setminus\OLa$ and (as in the case $y^{\sss(1)}\in Q\cap(\OLa)_+\setminus\OLa$) we conclude that $\bs{\bar\omega}^0$ connects $y^{\sss(1)}$ to $y^{\sss(3)}$ to $\bar u+e_j$ to $\bar u$.  

Thus we have shown that $\bar u$ is pivotal.

\bigskip

We now turn to the construction of $\OLa$ and $\bar u$ (see e.g.~Figure \ref{fig:constr}), broken into three cases.   \emph{ We assume for now that $o\notin \bar Q$. This assumption will be in place for the first two cases, and we will come back to the case $o\in \bar Q$ at the end of the proof. }

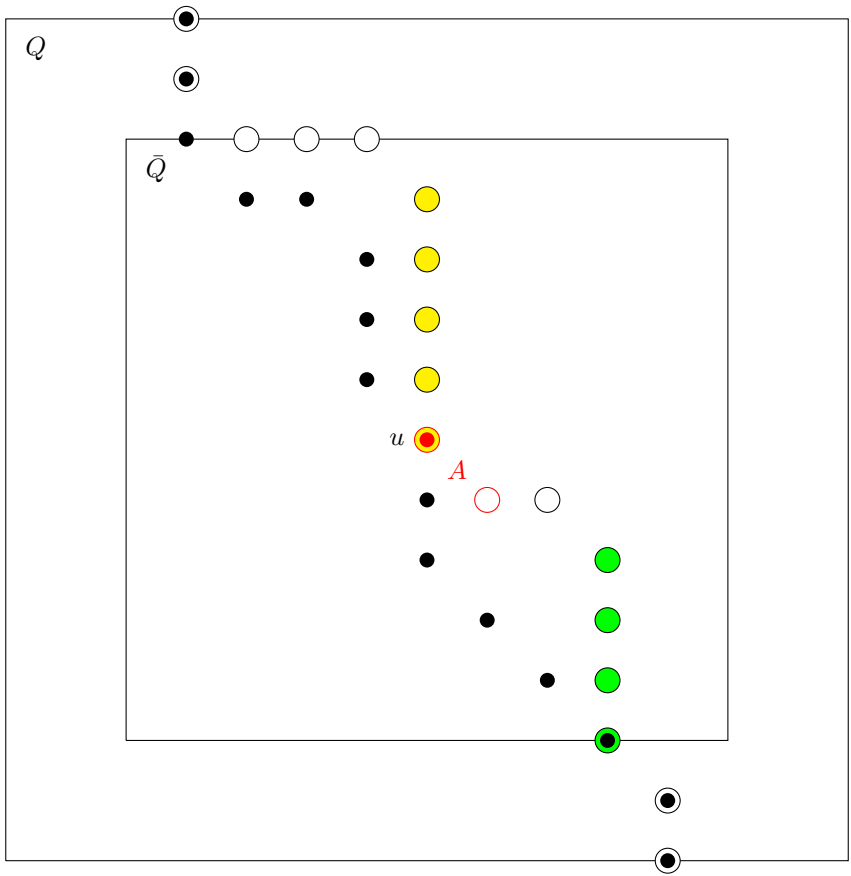
\begin{figure}[h]
\centering
\begin{tikzpicture}[scale=0.8]
\draw (0,0)--(14,0)--(14,14)--(0,14)--(0,0);
\draw (0.5,13.5) node {$Q$};
\draw (2,2)--(12,2)--(12,12)--(2,12)--(2,2);
\draw (2.5,11.5) node {$\bar Q$};
\draw (3,14) node[circle,draw=black,fill=white]  {};
\draw (3,13) node[circle,draw=black,fill=white]  {};
\draw (4,12) node[circle,draw=black,fill=white]  {};
\draw (5,12) node[circle,draw=black,fill=white]  {};
\draw (6,12) node[circle,draw=black,fill=white]  {};
\draw (7,11) node[circle,draw=black,fill=yellow]  {};
\draw (7,10) node[circle,draw=black,fill=yellow]  {};
\draw (7,9) node[circle,draw=black,fill=yellow]  {};
\draw (7,8) node[circle,draw=black,fill=yellow]  {};
\draw (7,7) node[circle,draw=red,fill=yellow]  {};
\draw (8,6) node[circle,draw=red,fill=white]  {};
\draw (9,6) node[circle,draw=black,fill=white]  {};
\draw (10,5) node[circle,draw=black,fill=green]  {};
\draw (10,4) node[circle,draw=black,fill=green]  {};
\draw (10,3) node[circle,draw=black,fill=green]  {};
\draw (10,2) node[circle,draw=black,fill=green]  {};
\draw (11,1) node[circle,draw=black,fill=white]  {};
\draw (11,0) node[circle,draw=black,fill=white]  {};

\draw (3,14) node[circle, scale=0.6, fill=black]  {};
\draw (3,13) node[circle, scale=0.6, fill=black]  {};
\draw (3,12) node[circle, scale=0.6, fill=black]  {};
\draw (4,11) node[circle, scale=0.6, fill=black]  {};
\draw (5,11) node[circle, scale=0.6, fill=black]  {};
\draw (6,10) node[circle, scale=0.6, fill=black]  {};
\draw (6,9) node[circle, scale=0.6, fill=black]  {};
\draw (6,8) node[circle, scale=0.6, fill=black]  {};
\draw (7,7) node[circle, scale=0.6, fill=red]  {};
\draw (7,6) node[circle, scale=0.6, fill=black]  {};
\draw (7,5) node[circle, scale=0.6, fill=black]  {};
\draw (8,4) node[circle, scale=0.6, fill=black]  {};
\draw (9,3) node[circle, scale=0.6, fill=black]  {};
\draw (10,2) node[circle, scale=0.6, fill=black]  {};
\draw (11,1) node[circle, scale=0.6, fill=black]  {};
\draw (11,0) node[circle, scale=0.6, fill=black]  {};

\draw (6.5,7) node {$u$};
\draw (7.5,6.5) node[red] {$A$};

\end{tikzpicture}
\caption{A planar $\Delta$ (shown with dots) and a corresponding $\OLa$ (shown with circles), with $n=7$. $A$ is shown in red. The construction locates $\bar u$ either among the yellow nodes (if $i=2$) or the green nodes (if $i=1$)}
\label{fig:constr}
\end{figure}

\medskip

\noindent{\bf Case I: 
$o \notin \bar Q$, $u \in o_+$.}

Assume first that $u\in o_+$. Because $u\in\rt$ and $o\in\rt_+$, (ix) of Lemma \ref{lem:terrace} implies that $[o,u]\subset\rt$.  Since we assumed that $o\notin \bar Q$, we must have $u-(n-2)e_j\in[o,u]$ for any $j$. Since $n-1>\rd$,  Lemma \ref{lem:Vd} implies that we can find a $\bar u\in V_d$ of the form $u-ke_j$, $0\le k\le n-2$.  Then properties \eqref{eqn0}--\eqref{eqn5} hold with $\OLa=\rt$.  
Relation \eqref{eqn0} is immediate from \eqref{Lambda=lambda}.  
Relations \eqref{eqn1}--\eqref{eqn6} are trivially true. Relation \eqref{eqn3} is immediate since $o \in \mc{C}^{\sss R}_o(\bso^1)$, and \eqref{eqn4} holds since $-M\1\notin \rt_+$.  Finally, \eqref{eqn5} holds since our $u\in o_+$.

\bigskip

\noindent{\bf Case II:
$o \notin \bar Q$, $u \notin o_+$.}

\noindent {\bf Construction of $\OLa$:}
Assume now that $u\notin o_+$ .
Recall \eqref{Lambda=lambda} and \eqref{Hsuboplus}, i.e.~that 
$\rt=\lambda^{\sss R}_{R \cap \rt_+}$ and $H^{\sss R}_\rt\subset o_+$.
By Lemma \ref{lem:pivotalsitesandcorners} and the fact that $u\notin o_+$, there exists $i\in [d]$ such that $u+e_i\in R\setminus\rt$.  
Without loss of generality, we assume $u+e_1\in R\setminus\rt$. By definition of $\lambda^{\sss R}_{R \cap \rt_+}$ we conclude that 
\begin{equation}
\text{ for each $i\in [d]$, either \quad $u+e_1-e_i\notin R$ \quad or \quad $u+e_1-e_i\in R\cap\rt_+$}.\label{conc}
\end{equation}
Let $I$ be the set of $i$ for which the latter holds.

Define $A=\{u+e_1-e_i: i\in I\}\subset R\cap \rt_+$. Note that $1\in I$ so $u\in A$.  We now push up the corners of $\rt$ in $\bar Q^{\sss R:o}\setminus A$, to give $\rt'=\rt^{\sss{\bar Q,A}}$. By definition $\qt'$ has no corners in $\bar Q^{\sss R:o}\setminus A$.  
Lemma \ref{lem:minimalterrace}(ii) with $G=R \cap \rt_+$ shows that $(\rt')_+=\rt_+$ off $\bar Q^{\sss R:o}$ and by Lemma \ref{lem:minimalterrace} (i) also $\rt'=\rt$ off $\bar Q$ (and therefore also off $Q^{\sss R:o}$ by \eqref{bar_int}).  
This process might lead to the creation of sites in $H^{\sss R}_{\rt'}\setminus o_+$. 
But if it does, any points in that set must lie in $Q^{\sss R:o}$. In other words, 
\begin{equation}
(Q^{\sss R:o})^c\cap H^{\sss R}_{\rt'}\setminus o_+=\varnothing. \label{phew}
\end{equation}

To see this, we will show that if $z\in (Q^{\sss R:o})^c\cap H^{\sss R}_{\rt'}\setminus o_+$ then $z\in  H^{\sss R}_{\rt}\setminus o_+$, and then appeal to the fact that the latter set is empty.  Suppose $z$ is in $H^{\sss R}_{\rt’}$ and also outside $Q^{\sss R:o}$. Then each $z+e_i$ is in $\rt’$ or not in $R$. Suppose $z+e_i$ is in $\rt’$ but not in $\rt$.  Since $\rt = \rt’$ off $\bar Q$, we must have that $z+e_i\in \bar Q\subset (u+Q_{n-2})$. Therefore $z\in u+Q_{n-1}$, so every neighbour of $z$ is either in $Q$ or not in $R$. Therefore $z$ is actually in $Q^{\sss R:o}$, which is a contradiction. In other words, each $z+e_i$ is in $\rt$ or not in $R$. So $z\in H^{\sss R}_{\rt}$ as well.  As noted above this proves \eqref{phew}.

By repeated applications of Lemma \ref{lem:Qterraceconsequences}(i) there would then be corners in $H^{\sss R}_{\rt'}\setminus o_+$, and by \eqref{phew} these corners must be in $Q^{\sss R:o}$.  These corners can be successively deleted (i.e.~pushed up), as in Lemma  \ref{lem:Qterraceconsequences}(iii) without making any changes off $Q^{\sss R:o}$. This gives a new $R$-terrace $\rt''\subset \rt'$, which by construction has no corners in $H^{\sss R}_{\rt''}\setminus o_+$. It follows that $H^{\sss R}_{\rt''}\subset o_+$ (since if not then (as above) repeated applications of Lemma \ref{lem:Qterraceconsequences}(i) would produce corners in $H^{\sss R}_{\rt''}\setminus o_+$).

Now $\rt''$ might have new corners in $\bar Q^{\sss R:o}\setminus A$, but we can push these up in turn to get $\rt'''$, etc. Applying the two above procedures repeatedly, in turn, we eventually stabilize at an $R$-terrace which we take to be $\OLa$. By construction, it has no corners in $\bar Q^{\sss R:o}\setminus A$, and $H^{\sss R}_{\OLa}\subset o_+$.
\bigskip

\noindent {\bf Verification of the Terrace properties:}
Properties \eqref{eqn1} and \eqref{eqn2} follow automatically from our construction. Property \eqref{eqn6} follows, because every time we push up or delete a corner, the region above that terrace gets reduced. 
Property \eqref{eqn0} follows from Remark \ref{rem:lambdacubed}, since \eqref{Qpushupdef} implies that pushing up corners always yields terraces to which that Remark applies.

Property \eqref{eqn4} follows from \eqref{eqn6}, since $S\subset R\setminus\rt_+$. By Lemma \ref{lem:Qterraceconsequences}(ii), pushing up corners in $\bar Q^{\sss R:o}$ will never remove $o$, 
which was assumed to be $\notin \bar Q$.   Therefore \eqref{eqn3} follows as well, since the process of Lemma \ref{lem:Qterraceconsequences}(iii) is never applied to delete $o$ since $o\in o_+$. 
Therefore it only remains to find a $\bar u\in\OLa\cap V_d\cap \bar Q$. Note that since $H^{\sss R}_{\bar\rt}\subset o_+$, any such $\bar u$ will satisfy \eqref{eqn5}.

To do that we will first establish the following properties of the construction: 
\begin{equation}
\label{Aproperties}
\text{$A\subset(\OLa)_+$,  $u+e_1\in R\setminus\OLa$, and $u\in\OLa$.}
\end{equation}

We know that $u\in\rt$, which has the form $\lambda^{\sss R}_G$. So there is an $r\in [d]$ such that $u-e_r\in R\setminus\rt_+$. As in Remark \ref{rem:pushup_subset}, $(\rt^{\sss{\bar Q,A}})_+\subset \rt_+$, therefore $u-e_r\in R\setminus(\rt^{\sss {\bar Q,A}})_+$ too.  
Since $u\in A\subset (\rt^{\sss{\bar Q,A}})_+$ by definition of $\rt^{\sss{\bar Q,A}}$ and Lemma \ref{lem:Qterraceconsequences}(ii) (or Lemma \ref{lem:minimalterrace}(ii)), it follows that $u\in \lambda^{\sss R}_{R\cap (\rt^{\sss{\bar Q,A}})_+}\subset \rt^{\sss{\bar Q,A}}$, the latter by Lemma \ref{lem:QterracepropertiesB}(i).  Now  $u+e_1-e_r\in R$ (since $u+e_1\in R$ and $u-e_r\in R$, and $R$ is a box). So by definition of $A$ and $I$, $u+e_1-e_r\in A\subset (\rt^{\sss{\bar Q,A}})_+$,  
 and because $u-e_r\in  R\setminus(\rt^{\sss{\bar Q,A}})_+$, it follows from Remark \ref{rem:lambdacubed}
that $u+e_1-e_r\in\rt^{\sss{\bar Q,A}}$. 
Since $u+e_1-e_i\in(\rt^{\sss{\bar Q,A}})_+$ for all $i\in I$ (by definition of $A$ and the fact that $A\subset (\rt^{\sss{\bar Q,A}})_+$ as above), we conclude that $u+e_1\in  (\rt^{\sss{\bar Q,A}})_+\setminus \rt^{\sss{\bar Q,A}}$.  To see this, note that if $u+e_1 \in \rt^{\sss{\bar Q,A}}=\lambda^{\sss R}_{R \cap (\rt^{\sss{\bar Q,A} })_+}$ (the latter by Remark \ref{rem:lambdacubed}) then there exists $i_0\in [d]$ with $u+e_1-e_{i_0} \in R\setminus (\rt^{\sss{\bar Q,A}})_+$.  So $i_0\in I$ (by \eqref{conc}) but $u+e_1-e_{i_0} \notin  (\rt^{\sss{\bar Q,A}})_+$, giving a contradiction.

Since $u+e_1\in  R \setminus \rt^{\sss{\bar Q,A}}$ 
we have that  for each $i\in [d]$, $u+e_1-e_i\notin H_{\rt^{\sss{\bar Q,A}}}^{\sss R}$ (so in particular  $u\notin H_{\rt^{\sss{\bar Q,A}}}^{\sss R}$). 
This shows that no point of $A$ will be deleted from $\rt^{\sss{\bar Q,A}}$ by applying the process of Lemma  \ref{lem:Qterraceconsequences} (iii). Pushing up corners in $\bar Q^{\sss R:o}\setminus A$ won't excise any part of $A$ either (Lemma \ref{lem:Qterraceconsequences}(ii)), so in fact, points of $A$ survive every  iteration in the construction of $\OLa$, which verifies that
$A\subset(\OLa)_+$. That implies the second property of \eqref{Aproperties}, since for every $k$, either $u+e_1-e_k\in\OLa$ (for $k \in I$) or it $\notin R$ (so $k \notin I$), so $u+e_1$ can't be in $\OLa$.
The remaining property, that $u\in\OLa$, now follows since $u\in A\subset \OLa_+$  while as noted above, for some $r\in [d]$ we have  $u-e_r\in R\setminus \rt_+\subset R\setminus \OLa_+$.
\bigskip

\noindent {\bf Finding $\bar u$:}
There must exist indexes $i\neq j$ with $u+ne_i\in R$ and $u-ne_j\in R$, since every index has at least one of these properties (as $n<N$), and neither property fails for every index (as $u\notin \1_+$ and $u\notin S$). Therefore $u+ne_i\in Q$ and $u-ne_j\in Q$.

\bigskip

\noindent \emph{Assume first that $i\neq 1$.}  If  $u+(n-3)e_i\in\OLa$ then
 Lemma \ref{lem:QL}  (cf.~(ix) of Lemma \ref{lem:terrace})
  implies that $u+ke_i\in\OLa$ for all $k$ such that $0\le k\le n-3$ (since we already know that $u\in\OLa$). But $n-2\ge \rho_d$ so by Lemma \ref{lem:Vd}
at least one of these points is in $V_d$. We may take that point as $\bar u$ in this case, since all such points also lie in $\bar Q$.

Otherwise $u+(n-3)e_i\in \bar Q\setminus \OLa$, so there is a first $q\in [1,n-3]$ such that $u+q e_i\notin\OLa$.  Set $y_0=u+(q-1) e_i-e_j$ and $y_1=u+qe_i-e_j$. By choice of $i$ and $j$, both $y_0\in \bar Q \subset R$ and $y_1\in \bar Q \subset R$. We claim that at least one of these (which we will then denote $y$) is in $\OLa$.
To see this, note that as $u+qe_i\in R\cap (\OLa)_+\setminus\OLa$ it follows (e.g.~apply the conclusion of Remark \ref{rem:lambdacubed} to the point $u+qe_1\in (\OLa)_+$)
that $y_1\in(\OLa)_+$. If $y_1\in \OLa$ then take $y=y_1$ to establish the claim.  Otherwise $y_1\notin \OLa$, so (again by Remark \ref{rem:lambdacubed} at the point $y_1\in (\OLa)_+$)   $y_0=y_1-e_i\in (\OLa)_+$. But we know that $y_0+e_j=u+(q-1)e_i\in\OLa$, which (by Lemma \ref{lem:QL}) implies that $y:=y_0\in\OLa$, which verifies the claim. %
The assumption that $i\neq 1$ implies that $y\notin A_+$, since every element of $A_+$ is either in $u_+$ or has first component at least $u^{\sss[1]}+1$.
Because $\OLa=\OLa^{\sss{\bar Q,A}}$ (recall the sentence prior to ``verification of the Terrace properties), and $y \in \bar Q$, Lemma \ref{lem:minimalterrace} (iii) provides a $z\in (\OLa)_+\cap (A\cup\partial_{\sss R} \bar Q)$ with $y\in z_+$. Since $y\notin A_+$, in fact $z\in (\OLa)_+\cap\partial_{\sss R} \bar Q$.  Thus $\|z-u\|_{\infty}=n-2$, so there is a $k\in [d]$ such that $|z^{\sss[k]}-u^{\sss[k]}|=n-2$.
Because $y\in\OLa$,  Lemma \ref{lem:QL} (cf.
Lemma \ref{lem:terrace} (ix)) shows that $z_+\cap y_- \subset \OLa$, i.e.~$[z,y]\subset\OLa$.  
We have $z^{\sss [i]}\le y^{\sss [i]}<u^{\sss [i]}+n-2$, $z^{\sss [j]}\le y^{\sss [j]}=u^{\sss [j]}-1<u^{\sss [j]}+n-2$, and for $k\ne i,j$, 
 $z^{[k]}\le y^{[k]}=u^{[k]}$. 
 Now $z^{\sss[k]}-u^{\sss[k]}<n-2$ for each $k$ and $|z^{\sss[k]}-u^{\sss[k]}|=n-2$ for some $k$ implies that 
 $z^{[k]}=u^{[k]}-(n-2)$ for some $k$. But $y^{[k]}\ge u^{[k]}-1$, so $z^{[k]}\le y^{[k]}-(n-3)$. Therefore $y-(n-3)e_k\in[z,y]\subset\OLa$. Arguing as before, there is at least one point from $V_d$ among the $n-2$ points $y-\ell e_i$, $0\le \ell\le n-3$, which we take as $\bar u$. 
\bigskip

\noindent \emph{Special case $i=1$:}
Now turn to the case $i=1$. Let $v=u+e_1-e_j$. By choice of $j$, we know $v\in R$, so in fact $v\in A\subset\OLa$. We will repeat the previous argument, using $v$ instead of $u$.

We know that $v+(n-4)e_i\in R$. If $v+(n-4)e_i\in \OLa$ then we may repeat the earlier argument (now using that $n-3\ge \rho_d$) to see that $v+ke_i\in\OLa\cap V_d\cap\bar Q$ for some $0\le k\le n-4$, which we may then take to be $\bar u$. Otherwise $v+(n-4)e_i\notin \OLa$ and as  before we may find a $q\in[1,n-4]$ and a $y\in\OLa$, with either $y=y_0:=v+(q-1)e_i-e_j$, or $y=y_1:=v+qe_i-e_j$. Equivalently, $y=u+qe_i-2e_j$ or $y=u+(q-1)e_i-2e_j$, which again implies that $y\notin A_{+}$. 

Now $q\le n-4$ implies that $y\in\bar Q$. Arguing as before, there is a $z\in(\OLa)_+\cap\partial_{\sss R} \bar Q$ with $y\in z_+$. Therefore $[z,y]\subset\OLa$. There will now be at least one coordinate $k$ such that $z^{\sss[k]}=u^{\sss[k]}-(n-2)\le y^{\sss[k]}-(n-4)$ and as before, one of the points $y-\ell e_k$, $0\le \ell \le n-4$ will be in $\OLa\cap V_d\cap\bar Q$. We take that point as $\bar u$.
\bigskip

\noindent{\bf Case III: $o\in\bar Q$.}

 Again, there are two possibilities, namely that $u\in o_+$ or $u\notin o_+$. If $u\in o_+$ then as before we have $[o,u]\subset\Delta$. So in fact, $\OLa=\Delta$ and $\bar u=o\in V_d$ satisfy the desired conditions.

Therefore assume $u\notin o_+$. In fact, every step of the previous argument carries through as before, up to the verification of \eqref{eqn3}, which could fail. So consider the procedure we used to obtain $\OLa$, except carry out the excisions or pushing up of corners one by one. If at every stage of this process we have an $R$-terrace $\Delta'$ for which $o\in (\Delta')_+$, then the process can continue to the end, to give $\OLa$ and $\bar u$ satisfying the desired conditions. But if at some stage $o$ ceases to be $\in (\Delta')_+$, we instead stop the process and let $\OLa$ be the last $\Delta'$ with $o\in(\Delta')_+$. Then in fact $o\in\OLa$, as each step of pushing up loses only a point in the current terrace.  Now, every step of the construction process preserves \eqref{eqn0},\eqref{eqn1},\eqref{eqn2},\eqref{eqn6},\eqref{eqn4}, while our stopping rule means that \eqref{eqn3} holds as well.
But also $o\in V_d$ and $o\in o_+$. So in fact, all the desired conditions will hold for this $\OLa$ with the choice $\bar u=o$.
 
This completes the proof.
\end{proof}

\subsection{Orthant model for slabs}
\label{slabs}
Recall the notation introduced after Proposition \ref{prp:strictdisturbedinequality}.

In this section, terraces for $\mathbb{Z}^{d+1}$ will be called {\it $(d+1)$-terraces}, to distinguish them from what we call {\it $d$-terraces}, which will be subsets of $S^{\{i\}}=\Z^d\times \{i\}$ for $i\in [2]$.
For a subset $A^{\{i\}}$ of $S^{\{i\}}$ we will let $(A^{\{i\}})_{+[d]}$ denote the set of points $z\in S^{\{i\}}$ for which there exists $y\in A^{\{i\}}$ such that $z^{\sss[j]}\ge y^{\sss[j]}$ for each $j\in [d]$. We also let $A^{\{i\}}_{+}$ denote the set of points $z\in S^{\{1,2\}}$ such that there exists an $y\in A^{\{i\}}$ such that $z^{\sss[j]}\ge y^{\sss[j]}$ for each $j\in [d+1]$.  It is an easy exercise to see that \begin{equation}
(A^{\{i\}})_{+[d]}=(A^{\{i\}})_+\cap S^{\{i\}}.\label{+d+}
\end{equation}

Exactly as in Proposition \ref{prop:CoStructure} or Lemma \ref{lem:localCoStructure}, one can show that the following holds on the graph $S^{\{1,2\}}$. Loosely, it says that $\mc{C}_{x}^{\{1,2\}}(p)$ is bounded by two $d$-terraces, $\Lambda^1$ in $S^{\{1\}}$ and $\Lambda^2$ in $S^{\{2\}}$, for which the projection of $\Lambda^2$ onto $S^{\{1\}}$ lies below $\Lambda^1$. Both $\Lambda^1$ and $\Lambda^2$ consist of sites in $\Omega_1$, as do any sites in $S^{\{2\}}$ that project onto the region between $\Lambda^1$ and the projection of $\Lambda^2$.
 
\begin{lemma}
\label{lem:slabboundary}
For $p>p_c^{\{1,2\}}(d)$ almost surely for each $x\in S^{\{1,2\}}$ there are $d$-terraces $\Lambda^i\subset S^{\{i\}}$, $i=1,2$ (depending on $x$) such that:
\begin{itemize}
\item[\normalfont(i)] $\Lambda^1\cup\Lambda^2\subset \mc{C}_{x}^{\{1,2\}}(p)$, and 
\item[\normalfont(ii)] $\Lambda^1\cup\Lambda^2\subset \Omega_1$, and 
\item[\normalfont(iii)] $(\Lambda^i)_+\cap S^{\{i\}}=\mc{C}_{x}^{\{1,2\}}(p)\cap S^{\{i\}}$ for each $i$, and 
\item[\normalfont(iv)] $(\Lambda^1)_+\cap S^{\{2\}}\subset (\Lambda^2)_+$, and 
\item[\normalfont(v)] $((\Lambda^2)_+\cap S^{\{2\}})\setminus ((\Lambda^1)_+\cap S^{\{2\}})
\subset \Omega_1$.
\item[\normalfont(vi)] Let $W=e_{d+1}+(\{o\}\cup \mc{E}_-(d))$.
If $z\in\Lambda^1$ then $(z+W)\cap \Omega_1\ne \varnothing$.  
\end{itemize}
\end{lemma}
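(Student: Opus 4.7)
The plan is, for $p>p_c^{\{1,2\}}(d)$, to identify each layer $S^{\{i\}}$ with $\Z^d$ and argue that a.s.~the per-layer slice
\[
\mc{C}^{\{i\}}:=\mc{C}_x^{\{1,2\}}(p)\cap S^{\{i\}}
\]
is a good subset of $S^{\{i\}}\cong \Z^d$.  Then the candidate $d$-terraces are $\Lambda^i:=\lambda_{\mc{C}^{\{i\}}}$, and all of (i)--(vi) will follow by exploiting the $\pm e_{d+1}$ arrows available in the slab.

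First I would verify the hypotheses needed to apply the terrace formalism.  By definition of $p_c^{\{1,2\}}(d)$ together with Lemma~\ref{lem:01}(2)--(3), a.s.~$\mc{C}:=\mc{C}_x^{\{1,2\}}(p)$ is bounded below in $\Z^{d+1}$; consequently each $\mc{C}^{\{i\}}$ is bounded below in the $d$-dimensional sense.  Since $e_j\in\mc{G}_y$ for every $y$ and $j\in [d]$ and these steps stay in $S^{\{i\}}$, the set $\mc{C}^{\{i\}}$ is solid above in $S^{\{i\}}$.  For non-emptiness of both layers the only nontrivial case is $x\in\Omega_1\cap S^{\{2\}}$, but then $\{x+ke_1:k\ge 0\}\subset\mc{C}^{\{2\}}$ by solidity above, and a.s.~(as $p<1$) this half-line contains an $\Omega_0$ site from which a $-e_{d+1}$ step enters $S^{\{1\}}$.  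The remaining input is $d$-saturation of $\mc{C}^{\{i\}}$, which I argue separately in the final paragraph.  Granting all this, Lemma~\ref{lem:terrace}(ii) applied in $\Z^d$ gives $(\Lambda^i)_{+[d]}=\mc{C}^{\{i\}}$, which together with \eqref{+d+} proves (iii).  Property~(i) is immediate from $\Lambda^i\subset \mc{C}^{\{i\}}\subset \mc{C}$, and~(ii) follows because if $y\in\Lambda^i\cap\Omega_0$ then $-e_j\in\mc{G}_y$ for every $j\in [d]$, giving $y-e_j\in \mc{C}^{\{i\}}$ for every $j$ and so $y\notin \lambda_{\mc{C}^{\{i\}}}=\Lambda^i$, a contradiction.

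For~(iv), if $z\in\Lambda^1$ then $e_{d+1}\in\mc{G}_z$ forces $z+e_{d+1}\in \mc{C}^{\{2\}}=(\Lambda^2)_{+[d]}\subset(\Lambda^2)_+$, and then solidity above of $(\Lambda^2)_+$ in $\Z^{d+1}$ upgrades this to $(\Lambda^1)_+\cap S^{\{2\}}\subset (\Lambda^2)_+$.  For~(v), if $y\in S^{\{2\}}\cap(\Lambda^2)_+\setminus(\Lambda^1)_+$ were in $\Omega_0$, then $-e_{d+1}\in\mc{G}_y$ combined with $y\in \mc{C}^{\{2\}}$ gives $y-e_{d+1}\in \mc{C}^{\{1\}}=(\Lambda^1)_{+[d]}\subset (\Lambda^1)_+$, so $y\in (\Lambda^1)_+$, contradicting the assumption.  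For~(vi), suppose $z\in\Lambda^1$ while every site of $z+W$ lies in $\Omega_0$.  Then for each $j\in [d]$ the chain
\[
z\to z+e_{d+1}\to z+e_{d+1}-e_j\to z-e_j
\]
uses only available slab arrows (the first via $e_{d+1}\in \mc{E}_+$, the next two via $-e_j,-e_{d+1}\in\mc{E}$ at the $\Omega_0$ sites $z+e_{d+1}$ and $z+e_{d+1}-e_j$) and remains in the slab, giving $z-e_j\in\mc{C}^{\{1\}}$ for every $j\in[d]$, contradicting $z\in\Lambda^1=\lambda_{\mc{C}^{\{1\}}}$.

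The main obstacle is $d$-saturation of $\mc{C}^{\{i\}}$.  My plan is to reduce it to Lemma~\ref{NWpaths}.  Pick any $x'\in \mc{C}^{\{i\}}$ (which exists by the non-emptiness argument) and consider the intra-layer sub-graph on $S^{\{i\}}$ where one retains only those slab arrows whose tail and head both lie in $S^{\{i\}}$.  At sites of $\Omega_1\cap S^{\{i\}}$ these intra-layer arrows are exactly $\mc{E}_+(d)$ and at sites of $\Omega_0\cap S^{\{i\}}$ they are exactly $\mc{E}(d)$, so under the identification $S^{\{i\}}\cong\Z^d$ the forward intra-layer cluster of $x'$ is distributed as the half-orthant cluster in $\Z^d$ with the same parameter $p\in (0,1)$.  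Lemma~\ref{NWpaths} then implies that this intra-layer cluster meets every line parallel to any $e_j$, $j\in [d]$; since the intra-layer cluster is contained in $\mc{C}^{\{i\}}$, saturation follows.  The ``for each $x\in S^{\{1,2\}}$'' quantifier in the lemma is absorbed by countability of $S^{\{1,2\}}$.
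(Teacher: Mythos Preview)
Your proof is correct and follows the paper's overall strategy: define $\Lambda^i=\lambda_{\mc{C}^{\{i\}}}$ with $\mc{C}^{\{i\}}=\mc{C}_x^{\{1,2\}}(p)\cap S^{\{i\}}$, check goodness, and read off (i)--(v) essentially as the paper does. Two points are worth flagging.

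First, you are more careful than the paper about verifying that $\mc{C}^{\{i\}}$ is good: the paper simply asserts this (appealing by analogy to Proposition~\ref{prop:CoStructure}), whereas you supply an explicit saturation argument via Lemma~\ref{NWpaths} applied to the intra-layer half-orthant model. This is a genuine addition, and correct; just note that you are implicitly using countability once more to make Lemma~\ref{NWpaths} hold a.s.\ simultaneously for all starting points $x'$, since your $x'$ is chosen after seeing the environment.

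Second, your argument for (vi) is different from the paper's and arguably more direct. The paper derives (vi) from (ii), (iv), and (v): it splits on whether $z+e_{d+1}\in\Lambda^2$, and in the negative case locates a specific $k$ with $z-e_k+e_{d+1}$ lying in the region of (v). You instead argue by contradiction with an explicit three-step slab path $z\to z+e_{d+1}\to z+e_{d+1}-e_j\to z-e_j$, valid for every $j$ under the hypothesis $(z+W)\subset\Omega_0$, which forces $z-e_j\in\mc{C}^{\{1\}}$ for all $j\in[d]$ and hence contradicts $z\in\Lambda^1$ via Lemma~\ref{lem:terrace}(i). Both routes work; yours avoids invoking (iv) and (v), while the paper's yields the slightly sharper information of which particular element of $z+W$ lies in $\Omega_1$.
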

\begin{proof}
By Lemma \ref{lem:01} (2) and the fact that $p>p_c^{\{1,2\}}(d)$, we have that $\mc{C}_{x}^{\{1,2\}}(p)$ is a.s.~bounded below.
Because $e_j\in \mc{E}_+$ for every $j\in[d+1]$, it is clear that $(\mc{C}_{x}^{\{1,2\}}(p))_+=\mc{C}_{x}^{\{1,2\}}(p)$. Therefore $\mc{C}_{x}^{\{1,2\}}(p)\cap S^{\{i\}}$ is a \emph{good} set (viewed as a subset of $\Z^d$), and (recall Definitions \ref{def:good} and \ref{def:terrace}) we may define $\Lambda^i=\lambda_{\mc{C}_{x}^{\{1,2\}}(p)\cap S^{\{i\}}}\subset S^{\{i\}}$.

 Claim (i) is immediate.   We have (ii) because if $x\in\Lambda^i\setminus\Omega_1$, then $x-e_j$ would $\in \mc{C}_{x}^{\{1,2\}}(p)\subset (\Lambda^i)_{+[d]}$ for every $j\in[d]$, contradicting the fact that $\Lambda^i$ is a $d$-terrace. 
 Claim (iii) holds since $(\Lambda^i)_+\cap S^{\{i\}}=(\Lambda^i)_{+[d]}=\mc{C}_{x}^{\{1,2\}}(p)\cap S^{\{i\}}$ (the first equality from \eqref{+d+}, and the second quantity by Lemma \ref{lem:terrace}(ii) applied to $S^{\{1\}}$ as a copy of $\Z^d$).  Since $(\Lambda^1)_+\cap S^{\{2\}}\subset \mc{C}_{x}^{\{1,2\}}(p)\cap S^{\{2\}}\subset (\Lambda^2)_+$, (the latter inclusion by (iii)),  (iv) follows.

To prove (v), suppose that $y \in ((\Lambda^2)_+\cap S^{\{2\}})$.  Then by (iii) $y \in \mc{C}_{x}^{\{1,2\}}(p)\cap S^{\{2\}}$.  
If $y-e_{d+1}\in \mc{C}_{x}^{\{1,2\}}(p)$ then $y-e_{d+1}\in \mc{C}_{x}^{\{1,2\}}(p)\cap S^{\{1\}}=(\Lambda^1)_+\cap S^{\{1\}}$, thus $y \in (\Lambda^1)_+\cap S^{\{1,2\}}$.  It follows that if $z\in ((\Lambda^2)_+\cap S^{\{2\}})\setminus ((\Lambda^1)_+\cap S^{\{2\}})$ then $z\in \mc{C}_x^{\{1,2\}}(p)$ but $z-e_{d+1}\notin \mc{C}_{x}^{\{1,2\}}(p)$, so $z\in \Omega_1$.  
To prove (vi), let $z\in\Lambda^1$. First note that if $z+e_{d+1}\in\Lambda^2$ then by (ii) we are done. So assume otherwise.  Since $z+e_{d+1}\in (\Lambda^1)_+\cap S^{\{2\}}\subset (\Lambda^2)_+$ (the latter by (iv)) we have that $z+e_{d+1}\in  (\Lambda^2)_+\setminus  \Lambda^2$ so $z+e_{d+1}-e_j\in(\Lambda^2)_+$ for each $j\in[d]$.  Because $\Lambda^1$ is a $d$-terrace, there is a $k\in[d]$ such that $z-e_k\in S^{\{1\}}\setminus (\Lambda^1)_{+[d]}$. Since $(\Lambda^1)_{+[d]}=(\Lambda^1)_+\cap S^{\{1\}}$ (by \eqref{+d+}) it follows that also $z-e_k\notin(\Lambda^1)_{+}$, so  $z-e_k+e_{d+1}\in ((\Lambda^2)_+\cap S^{\{2\}})\setminus ((\Lambda^1)_+\cap S^{\{2\}})$. Therefore by (v), $z-e_k+e_{d+1} \in \Omega_1$.
\end{proof}

\begin{proof}[Proof of Proposition \ref{slabstrictinequality}]
We show that whenever $p>p_c^{\{1,2\}}(d)$ we have $p\ge p_{c}(f,d)$. 

Let $\bs {\mc{G}}=(\mc{G}_x)_{x\in\mathbb{Z}^{d+1}}$ be the half-orthant model in dimension $d+1$. As we have seen above $(\mc{G}_x)_{x\in S^{\{1\}}}$  (using only arrows that stay in $S^{\{1\}}$) realizes the half-orthant model in dimension $d$, via the natural correspondence between $S^{\{1\}}$ and $\mathbb{Z}^d$. We will now define two other environments $\bs{\eta}=(\eta_x)_{x\in S^{\{1\}}}$  [resp. $\bs{\zeta}=(\zeta_x)_{x\in S^{\{1\}}}$] by specifying the sets $\Omega_1({\bs \eta})$ [resp. $\Omega_1(\bs{\zeta})$] upon which $\eta_x=\mc{E}_+$ [resp. $\zeta_x=\mc{E}_+$].

Let $W$ be defined as in Lemma \ref{lem:slabboundary}(vi).  For $V_d$ as in Section \ref{disturbedorthant}, set $\hat E_1=V_d\times\{1\}\subset\mathbb{Z}^{d+1}$. For $x \in S^{\{1\}}$ we set 
\begin{align}
&x\in \Omega_1(\bs{\eta}) \iff \big\{x\in \Omega_1(\bs{\mc{G}}) \text{ and }(x+W) \cap \Omega_1(\bs{\mc{G}})\ne \varnothing\big\}, \text{ and }\label{eta}\\
&x \in \Omega_1(\bs{\zeta}) \iff \big\{ x\in \hat E_1 \cap \Omega_1(\bs{\eta}) \quad \text{ or } \quad x\in (S^{\{1\}}\setminus \hat E_1)\cap \Omega_1(\bs{\mc{G}})  \big\}.\label{zeta}
\end{align}
Now note that (with $(U_z)_{z \in \Z^{d+1}}$ i.i.d.~standard uniforms) for $x\in S^{\{1\}}$, $\eta_x=\eta_x(p)$ depends on $U_x$, $U_{x+e_{d+1}}$ and $\{U_{x+e_{d+1}-e_j}:j \in [d]\}$.  For $x\in \hat{E}_1\subset S^{\{1\}}$, $\zeta_x(p)$ depends on the same variables.  For $x\in S^{\{1\}}\setminus \hat{E}_1$, $\zeta_x(p)$ depends on $U_x$ only.  Thus if $x,x'\in S^{\{1\}}$ are distinct then $\zeta_x$ and $\zeta_{x'}$ depend on disjoint collections of $U_z$ (this is obvious unless $x,x'\in \hat{E}_1$, in which case we use the definition of $V_d$ and Lemma \ref{lem:Edspreadout}). 

 It is now easy to check that in fact $\bs{\zeta}$  is an $f$-disturbed orthant model. In particular for $x\in \hat{E}_1$,
\begin{align*}
\P(\zeta_x(p)=\mc{E}_+)&=\P(U_x\le p)\Big(1-\P(U_{x+e_{d+1}}>p)\prod_{i=1}^d\P(U_{x+e_{d+1}-e_i}>p)\Big)\\
&=p(1-(1-p)^{d+1}).
\end{align*}

Let $p>p_c^{\{1,2\}}(d)$, and let $\Lambda^1\subset \Omega_1(\bs{\mc{G}})$ be the $d$-terrace given by Lemma \ref{lem:slabboundary} (using only arrows in $S^{\{1,2\}}$).   
By Lemma \ref{lem:slabboundary}(vi), $\Lambda^1\subset \Omega_1(\bs{\eta})$. 
Moreover, it is easy to see from \eqref{eta} and \eqref{zeta} that $\Omega_1(\bs{\eta})\subset \Omega_1(\bs{\zeta})$. Therefore $\Lambda^1$ is also a $d$-terrace of $\Omega_1$ sites for $\bs{\zeta}$. This shows that  $\mc{C}_o(p,f(p))\subset \Z^d$ is a.s.~bounded below when $p>p_c^{\{1,2\}}(d)$.  By Lemma \ref{lem:01}(1) this means that for $p>p_c^{\{1,2\}}(d)$ we have that $\P(\mc{C}_o(p,f(p))=\Z^d)=0$, so $p\ge p_c(f,d)$ as claimed .
\end{proof}

\section*{Acknowledgements}
Salisbury's work was supported in part by a grant from NSERC. The work of Holmes was supported in part by Future Fellowship FT160100166 from the Australian Research Council.  MH thanks the Pacific Institute for the Mathematical Sciences for hosting part of this research.  The authors sincerely thank Geoffrey Grimmett for several helpful comments, including identifying some inaccuracies in the introduction.

\bibliographystyle{plain}

\end{document}